\providecommand{\U}[1]{\protect\rule{.1in}{.1in}}
\newtheorem{theorem}{Theorem}
\newtheorem*{theorem*}{Theorem}
\newtheorem*{conjecture*}{Conjecture}
\newtheorem*{belief*}{General Belief}
\newtheorem{corollary}[theorem]{Corollary}
\newtheorem{definition}[theorem]{Definition}
\newtheorem{lemma}[theorem]{Lemma}
\newtheorem{notation}[theorem]{Notation}
\newtheorem{proposition}[theorem]{Proposition}
\newtheorem{question}[theorem]{Question}
\renewenvironment{proof}[1][Proof]{\noindent\textbf{#1.} }{\ \rule{0.5em}{0.5em}}
\newcommand{\Z}{\mathbb{Z}}
\newcommand{\R}{\mathbb{R}}
\newcommand{\Rf}{\mathbf{R}_{\floor{\cdot}}}
\newcommand{\Rc}{\mathbf{R}_{\ceil{\cdot}}}
\newcommand{\N}{\mathbb{N}}
\newcommand{\Q}{\mathbb{Q}}
\newcommand{\floor}[1]{\left\lfloor {#1}\right\rfloor}
\newcommand{\ceil}[1]{\left\lceil {#1}\right\rceil}
\newcommand{\proj}{\mathrm{proj}}
\renewcommand{\mod}{\ \mathrm{mod}\ }
\newcommand{\comm}{\operatorname{comm}}
\begin{document}

\title{\textbf{The Doorways Problem and Sturmian Words}
}
\author{Jason Siefken}
\address{Northwestern University, Evanston, IL 60208 USA} 
\email{siefkenj@math.northwestern.edu}
%\\
% \small Department of Mathematics\\
% \vspace{-.4em}
% \small Northwestern University\\
% \vspace{-.4em}
% \small 2033 Sheridan Rd.\\
% \vspace{-.4em}
% \small Evanston, IL \\
% \vspace{-.4em}
% \small \textsc{USA} 60208\\
% \small siefkenj@math.northwestern.edu}
\date{\today}
\maketitle

\begin{abstract}
	The doorways problem considers adjacent parallel hallways of unit
	width each with a single doorway (aligned with integer lattice
	points) of unit width.  It then asks, what are the properties of
	lines that pass through each doorway?  Configurations of doorways
	closely correspond to Sturmian words, and so properties of these configurations
	may be lifted to properties of Sturmian words.
	This paper classifies the
	slopes of \emph{lines of sight}, lines that pass through each doorway,
	for both the case of a finite number of parallel hallways and an
	infinite number and their consequences for Sturmian words.  
	We then produce a metric on configurations with
	an infinite number of hallways that preserves the property of admitting
	a line of sight under limits.  Pulling back this metric to $\R$, we 
	produce the Baire metric under which the irrational numbers form a complete
	metric space. Pulling back this metric to the set of all Sturmian sequences,
	we show that the set of all Sturmian sequences is complete with this metric
	(unlike with the standard metric).
\end{abstract}

\section{The Doorways Problem}

	Imagine a series of $n+1$ infinitely long parallel walls spaced
	one unit apart, creating $n$ hallways.
	Further imagine that each wall
	has infinitely many doors of unit width, but that only one door
	per wall is open.

	Standing to one side of the hallways, you could imagine
	certain arrangements of open doors you could see through
	and certain arrangements you could not. This is precisely stated in the
	following definitions.

	\begin{definition}[Hallway]
		An \emph{$n$-hallway} is the set $H_n\subset \R^2$ defined by
		\[
			H_n = \bigcup_{i\in\{0,\ldots,n\}} \{i\}\times (\R\backslash D_i)
		\]
		where $D_i=(d_i,d_i+1)$ is an open interval of width one and left
		point $d_i\in \Z$.
		The set $D_i$ is called the $i$th \emph{doorway}.
	\end{definition}

	\begin{definition}[Line of Sight]
		\label{DefLineOfSight}
		Given an $n$-hallway $H_n$, we can \emph{see through} $H_n$ if
		there exists some line $\ell_{\alpha\beta}=\{(x,\alpha x+\beta):x\in\R\}$
		with slope $\alpha$ and $y$-intercept $\beta$ so that $\ell_{\alpha\beta}\cap H_n=\emptyset$.
		If $\ell_{\alpha\beta}\cap H_n = \emptyset$, we call $\ell_{\alpha\beta}$ a
		\emph{line of sight} and we say $H_n$ \emph{admits} the line of sight $\ell_{\alpha\beta}$.
		If $\ell_{\alpha\beta}$ is a line of sight and $\alpha\in\Q$, we call $\ell_{\alpha\beta}$
		a \emph{rational line of sight}.
	\end{definition}

	Note that Definition \ref{DefLineOfSight} captures the idea that ``no light is blocked'' by
	a hallway.  This could be equivalently phrased as \emph{an $n$-hallway $H_n$ with doorways $D_i$
	admits the line of sight $\ell_{\alpha\beta}$ if $\ell_{\alpha\beta}\cap (\{i\}\times D_i)\neq \emptyset$
	for all $i$}, which would capture the idea that ``light passed through every doorway.''  However, 
	upon the introduction of infinite hallways, the ``no light is blocked'' definition will be more useful.

	The doorways problem in general asks what types of
	$n$-hallways can be seen through, and what are the properties
	of lines of sight.  This question is closely related to rotation
	sequences, balanced sequences, and Sturmian sequences \cite{fogg, lothaire},
	and it is from this context that the following motivating question arises.
	
	\begin{question}\label{QuestRat}
		For an $n$-hallway $H_n$ that can be seen through, is there always a line
		of sight $\ell_{\alpha\beta}$ with slope $\alpha=\frac{p}{q}$ where
		$q\leq n$?
	\end{question}

\subsection{Connection to Sturmians}
	Sturmian sequences and Sturmian words have many equivalent definitions in terms of
	rotation sequences, billiard sequences, balanced words, complexity, and invariant
	measures \cite{fogg, lanford, lothaire}.  For the sake of brevity, we provide only
	two equivalent definitions.
	\begin{definition}[Complexity]
		For a sequence $x\in\{0,1\}^\N$, the \emph{complexity function} is
		\[ L_n(x)=
		\#\{\text{distinct subwords of }x\text{ of length }n\}.\]
	\end{definition}
	\begin{definition}[Periodic and Eventually Periodic]
		For a sequence $x\in\{0,1\}^\N$, let $(x)_i$ be the $i$th coordinate
		of $x$. The sequence $x\in\{0,1\}^\N$ is called \emph{periodic} if there
		exists $m>0$ so that $(x)_i=(x)_{i+m}$ for all $i$ and is called \emph{aperiodic}
		otherwise.  The sequence is called \emph{eventually periodic} if there exists $m>0$
		and some $I$ so that $(x)_i=(x)_{i+m}$ for all $i>I$.
	\end{definition}

	\begin{definition}[Sturmian Sequence]
		Let $x\in\{0,1\}^\N$.  The sequence $x$ is a \emph{Sturmian sequence} if it is
		periodic and satisfies $L_n(x)\leq n+1$ for all $n$ or if it satisfies
		$L_n(x)=n+1$ for all $n$.
		A
		\emph{Sturmian word} is a subword of a Sturmian sequence.
	\end{definition}

	A sequence $x\in\{0,1\}^\N$ satisfying $L_n(x)=n+1$ is always aperiodic and never
	eventually periodic.  Thus, an eventually periodic Sturmian sequence must be periodic.
	Hedlund and Morse \cite{morse} proved that for any $x\in\{0,1\}^\N$, $x$
	is eventually periodic if and only if there
	exists an $n$ such that $L_n(x) < n+1$.  Viewed this way, aperiodic
	Sturmian sequences are the aperiodic sequences of the lowest possible complexity.

	\begin{definition}[Rotation Sequence]
		For a pair $(\alpha,\beta)\in[0,1]\times \R$, the rotation sequences $s=\Rf(\alpha,\beta)\in\{0,1\}^\N$ and
		$s'=\Rc(\alpha,\beta)\in\{0,1\}^\N$ are the sequences whose $i$th coordinates are given by
		\[
			(s)_i = \floor{(i+1)\alpha +\beta}-\floor{i\alpha+\beta}
		\]
		and
		\[
			(s')_i = \ceil{(i+1)\alpha +\beta}-\ceil{i\alpha+\beta},
		\]
		where $\floor{\cdot}$ and $\ceil{\cdot}$ are the floor and ceiling functions,
		respectively. A sequence $x\in\{0,1\}^\N$ is called a \emph{rotation sequence} if $x=\Rf(\alpha,\beta)$
		or $x=\Rc(\alpha,\beta)$ for some $(\alpha,\beta)\in [0,1]\times\R$.
	\end{definition}

	As shown in \cite{fogg, lothaire}, a sequence $x\in\{0,1\}^\N$ is Sturmian if and only if it is a rotation
	sequence.  Further, every Sturmian word appears as the starting word of a rotation sequence (equivalently Sturmian
	sequence).

	\vspace{.5cm}
	Given an $n$-hallway $H_n$ with doorways $D_i=(d_i,d_i+1)$, there is a natural correspondence between
	$H_n$ and elements in $\Z^n$.  Namely, associate $H_n$ with the 
	$n$-word $(d_1-d_0,d_2-d_1,\ldots, d_n-d_{n-1})$
	given by the differences between positions of consecutive doorways.  Let $\Phi:\{\text{hallways}\}\to\{\text{words}\}$
	denote this correspondence.

	The question of whether a hallway admits a line of sight only depends on
	the relative placement of each doorway and is therefore translation invariant.  
	Thus $H_n$ admits a line of sight if and only if every hallway
	in $\{H_n':\Phi(H_n')=\Phi(H_n)\}$ admits a line of sight.

	Fix an $n$-hallway $H_n$ with initial doorway $D_0=(0,1)$ and suppose $\Phi(H_n)$ is a Sturmian word.
	Further suppose $\Phi(H_n)$ appears as the initial word for the rotation sequence $s=\Rf(\alpha,\beta)$ 
	and that
	$(i\alpha+\beta)\notin\Z$ for $0\leq i\leq n$.  We can now conclude that 
	\[
		D_i = (\floor{ i\alpha+\beta},\floor{i\alpha+\beta}+1)
	\]
	and  $H_n$ admits the line of sight $\ell_{\alpha\beta}$.  The converse of this statement also
	holds, and with the technical assumptions minimized, we get Theorem \ref{ThmSturmClassification}.

	\begin{theorem}\label{ThmSturmClassification}
		Let $\Psi_a:\{a,a+1\}^n\to\{0,1\}^n$ be the map that sends $a\mapsto 0$ and $(a+1)\mapsto 1$.
		The $n$-hallway $H_n$ admits a line of sight if and only if $\Phi(H_n)\in \{a,a+1\}^n$
		for some $a$ and $\Psi_a\circ \Phi(H_n)$ is a Sturmian word.
	\end{theorem}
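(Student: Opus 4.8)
The plan is to route everything through the characterization cited above: a finite word over $\{0,1\}$ is Sturmian if and only if it is the initial word of a rotation sequence $\Rf(\alpha',\beta)$ or $\Rc(\alpha',\beta)$ for some $(\alpha',\beta)\in[0,1]\times\R$. Since both the value of $\Phi$ and the property of admitting a line of sight are translation invariant, I assume throughout that $d_0=0$, so that $d_i=\sum_{j=1}^i\Phi(H_n)_j$. Below, $\{y\}$ denotes the fractional part of $y$, so that $\floor{y}=y-\{y\}$.

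\emph{Forward implication.} Suppose $H_n$ admits the line of sight $\ell_{\alpha\beta}$. Since $H_n=\bigcup_i\{i\}\times(\R\setminus D_i)$, the condition $\ell_{\alpha\beta}\cap H_n=\emptyset$ says exactly that $\alpha i+\beta\in D_i=(d_i,d_i+1)$ for $i=0,\dots,n$, and since $d_i\in\Z$ this forces $d_i=\floor{\alpha i+\beta}$. Write $a=\floor{\alpha}$ and $\alpha'=\alpha-a\in[0,1)$; as $ai\in\Z$ we have $\floor{\alpha i+\beta}=ai+\floor{\alpha' i+\beta}$, hence
\[
\Phi(H_n)_i=d_i-d_{i-1}=a+\bigl(\floor{\alpha' i+\beta}-\floor{\alpha'(i-1)+\beta}\bigr),
\]
and the bracketed term is $0$ or $1$ because $\alpha'\in[0,1)$. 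Thus $\Phi(H_n)\in\{a,a+1\}^n$, and subtracting $a$ gives $(\Psi_a\circ\Phi(H_n))_i=\floor{\alpha' i+\beta}-\floor{\alpha'(i-1)+\beta}=\bigl(\Rf(\alpha',\beta)\bigr)_{i-1}$, so $\Psi_a\circ\Phi(H_n)$ is the length-$n$ initial word of a rotation sequence, hence a Sturmian word.

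\emph{Converse.} Suppose $\Phi(H_n)\in\{a,a+1\}^n$ and $w:=\Psi_a\circ\Phi(H_n)$ is Sturmian, so $w$ is the initial word of some rotation sequence; I write the argument for $\Rf(\alpha',\beta)$, the case $\Rc(\alpha',\beta)$ being identical with $\floor{\cdot},\ceil{\cdot}$ and $\{y\},\{-y\}$ interchanged. Telescoping,
\[
d_i=\sum_{j=1}^i\Phi(H_n)_j=ai+\sum_{j=1}^i w_j=ai+\sum_{j=0}^{i-1}\bigl(\Rf(\alpha',\beta)\bigr)_j=ai+\floor{i\alpha'+\beta}-\floor{\beta}.
\]
Put $\alpha=a+\alpha'$. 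A line $\ell_{\alpha\gamma}$ is a line of sight precisely when $\alpha i+\gamma\in(d_i,d_i+1)$ for all $i$; substituting the formula for $d_i$, cancelling $ai$ and $i\alpha'$, and using $\floor{y}=y-\{y\}$, this becomes
\[
-\{i\alpha'+\beta\}<\gamma-\{\beta\}<1-\{i\alpha'+\beta\}\qquad(i=0,\dots,n).
\]
The intersection over $i$ of these intervals for $\gamma-\{\beta\}$ is $\bigl(-\min_i\{i\alpha'+\beta\},\,1-\max_i\{i\alpha'+\beta\}\bigr)$, which is nonempty because all the $\{i\alpha'+\beta\}$ lie in $[0,1)$, so $\max_i\{i\alpha'+\beta\}-\min_i\{i\alpha'+\beta\}<1$; any $\gamma$ with $\gamma-\{\beta\}$ in this interval gives a line of sight.

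The computations above are routine, and I expect the real content to be precisely that last observation. The delicate case of the converse is when $i\alpha'+\beta$ is an integer for some $i\le n$, so that the intercept placing the line through the point $(i,i\alpha'+\beta)$ would put it on a wall --- exactly the configuration ruled out by the ``technical assumption'' that the theorem dispenses with. The reformulation handles it uniformly: for the single slope $\alpha=a+\alpha'$, the line-of-sight condition is an open condition on the intercept $\gamma$ alone and reduces to the tautology that the $n+1$ numbers $\{i\alpha'+\beta\}$ all fit in an interval of length less than $1$, so no perturbation of $\alpha$ or $\beta$ is needed.
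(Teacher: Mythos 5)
Your proof is correct. Both directions check out: the forward direction correctly extracts $a=\floor{\alpha}$ and identifies $\Psi_a\circ\Phi(H_n)$ with the initial word of $\Rf(\alpha-a,\beta)$, and the converse correctly reduces the existence of a line of sight of slope $a+\alpha'$ to the nonemptiness of $\bigl(-\min_i\{i\alpha'+\beta\},\,1-\max_i\{i\alpha'+\beta\}\bigr)$, which holds because all the fractional parts lie in $[0,1)$.

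For comparison: the paper never actually proves this theorem. It gives only the sketch preceding the statement (the correspondence $D_i=(\floor{i\alpha+\beta},\floor{i\alpha+\beta}+1)$ under the hypothesis $(i\alpha+\beta)\notin\Z$ for $0\le i\le n$) and then explicitly declines to carry out the proof ``in the context of Sturmian sequences,'' preferring to derive equivalent statements by studying hallways directly (Propositions \ref{PropYPartitions}--\ref{PropNPlus1} and their corollaries). Your argument is the completion of the paper's sketch: the genuinely new content you supply is the removal of the technical assumption, achieved by observing that for the fixed slope $a+\alpha'$ the admissible intercepts form a nonempty open interval, so when some $i\alpha'+\beta$ lands on the integer lattice one simply shifts the intercept rather than perturbing the slope. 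This is exactly the ``minimization of technical assumptions'' the paper asserts without proof, and your reduction to the tautology $\max_i\{i\alpha'+\beta\}-\min_i\{i\alpha'+\beta\}<1$ handles it cleanly. The symmetric treatment of the $\Rc$ case is also needed (the paper's sketch only mentions $\Rf$), and your indicated substitution works.

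One small point worth making explicit if you write this up: in the converse you invoke the fact that every Sturmian \emph{word} is the initial word of some rotation sequence, not merely a subword of a Sturmian sequence. The paper does state this (``every Sturmian word appears as the starting word of a rotation sequence''), so you are entitled to it, but it is the load-bearing citation of the whole converse and deserves an explicit reference.
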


	We will not prove Theorem \ref{ThmSturmClassification} in the context of Sturmian sequences,
	however, studying the hallway problem directly we will arrive at 
	equivalent results.  Theorem \ref{ThmSturmClassification} also gives context as to why 
	Question \ref{QuestRat} might be interesting.  

	Consider the following: given a finite Sturmian word $w$, is $w$ always contained in a
	periodic Sturmian word?  If so, what is the minimum period of such a word?  Translating from
	hallways to rotation sequences to Sturmian sequences, Question \ref{QuestRat} asks, ``Is
	a finite Sturmian word $w$ always contained in a periodic Sturmian sequence with period
	bounded by the length of $w$?''

	Studying $n$-hallways will provide a geometric way to answer this question.  Further,
	the extension of $n$-hallways to infinite hallways will 
	allow us to arrive at several results without the subtleties of working with Sturmian
	sequences or rotation sequences directly.  In particular, the distinction between \emph{aperiodic}
	and \emph{not eventually periodic}  and the need to include
	both $\floor{\cdot}$ and $\ceil{\cdot}$ (as in the definition of rotation sequences) is avoided.

\section{Answering the Question}

	As discussed earlier, the question of whether an $n$-hallway admits a line of sight
	is translation invariant.
	Thus, we will assume that all $n$-hallways satisfy $D_0=(0,1)$.
	Now, we will tackle the question of whether or not there exists lines of sight.

	\begin{definition}
		Let $\proj_\gamma:\R^2\to \R$ be \emph{parallel projection onto the $y$-axis}
		along a line of slope $\gamma$.  That is,
		\[
			\proj_\gamma(x,y) = y-\gamma x.
		\]
	\end{definition}

	\begin{proposition}
		\label{PropIntervalSight}
		If $H_n$ is an $n$-hallway that admits a line of sight, then there is
		an interval of slopes corresponding to lines of sight for $H_n$.
	\end{proposition}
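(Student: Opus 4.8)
The plan is to exploit the fact that the conditions defining a line of sight are finitely many \emph{open} conditions, so any line of sight can be perturbed slightly in slope while remaining a line of sight.

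First I would recall the equivalent formulation noted just after Definition \ref{DefLineOfSight}: the line $\ell_{\alpha\beta}$ is a line of sight for $H_n$ if and only if $\alpha i + \beta \in D_i = (d_i, d_i+1)$ for every $i \in \{0,\ldots,n\}$. So assume $H_n$ admits the line of sight $\ell_{\alpha\beta}$. If $n = 0$ the claim is trivial, since then every slope yields a line of sight, so assume $n \geq 1$. Because $\{0,\ldots,n\}$ is finite and each $\alpha i + \beta$ lies strictly inside the open interval $D_i$, the quantity
\[
	\epsilon = \min_{0 \leq i \leq n} \min\bigl\{\, \alpha i + \beta - d_i,\ (d_i + 1) - (\alpha i + \beta) \,\bigr\}
\]
is strictly positive.

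Next I would fix this $\epsilon$ and the intercept $\beta$, and vary only the slope. For any $\alpha'$ with $|\alpha' - \alpha| < \epsilon / n$ and any $i \in \{0,\ldots,n\}$,
\[
	\bigl| (\alpha' i + \beta) - (\alpha i + \beta) \bigr| = |\alpha' - \alpha|\, i \leq |\alpha' - \alpha|\, n < \epsilon,
\]
and since $\alpha i + \beta$ lies at distance at least $\epsilon$ from each endpoint of $D_i$, it follows that $\alpha' i + \beta \in D_i$. Hence $\ell_{\alpha'\beta}$ is again a line of sight, so every slope in the interval $(\alpha - \epsilon/n,\ \alpha + \epsilon/n)$ is realized by a line of sight, which proves the proposition.

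There is no genuine obstacle here; the only point requiring a little care is the passage from ``each of the $n+1$ conditions is open'' to ``a single $\epsilon$ works for all of them at once,'' which is exactly where finiteness of the number of hallways enters. (Equivalently and slightly more conceptually, the set $\{(\alpha,\beta) : d_i < \alpha i + \beta < d_i + 1 \text{ for all } i\}$ is a finite intersection of open half-planes, hence an open convex subset of $\R^2$; when it is nonempty, its image under projection to the $\alpha$-axis is a nonempty open interval.) It is worth flagging that this uniform-$\epsilon$ step is precisely what breaks down for infinite hallways, which is one reason the infinite case treated later in the paper requires different arguments.
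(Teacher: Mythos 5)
Your proof is correct and takes essentially the same approach as the paper: both exploit that the $n+1$ defining conditions are open and finite in number, yielding a slope-perturbation bound of order $1/n$. The paper packages this geometrically as the tube $\proj_\alpha^{-1}(D)\cap([0,n]\times\R)$ of width $n$ and positive height (so the intercept may vary along with the slope), whereas you fix the intercept $\beta$ and perturb only the slope; this is a cosmetic difference.
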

	\begin{proof}
		Fix $H_n$, an $n$-hallway, and let $\ell_{\alpha\beta}$ be a line of sight.
		We now have $\proj_\alpha(\ell_{\alpha\beta})=\{\beta\}$.

		Let $D_i=(d_i,d_i+1)$ be the $i$th doorway of $H_n$, and let
		\[
			D=\bigcap_{0\leq i\leq n} \proj_\alpha(\{i\}\times D_i).
		\]
		Since $\ell_{\alpha\beta}$ is a line of sight, $\beta\in D\neq \emptyset$.  Since $D$ is a finite
		intersection of open intervals, $D=(d_l,d_r)$ is an open interval.  It directly follows
		that the ``tube'' $T=\proj_\alpha^{-1}(D)\cap([0,n]\times\R)$ safely passes through every
		doorway in $H_n$, and as a consequence any line contained in $T$ will be a line of sight.
		See Figure \ref{FigTube} for an example.

		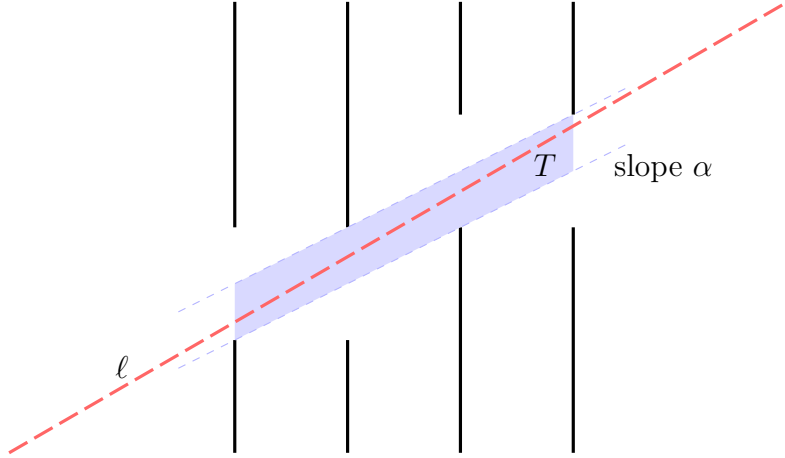
\begin{figure}[h!]
			\begin{center}

			\begin{tikzpicture}[scale=1.5]

			    \draw[very thick] (0,0) -- (0,1);
			    \draw[very thick] (0,2) -- (0,4);
			    
			    \draw[very thick] (1,0) -- (1,1);
			    \draw[very thick] (1,2) -- (1,4);
			    
			    \draw[very thick] (2,0) -- (2,2);
			    \draw[very thick] (2,3) -- (2,4);
			    
			    \draw[very thick] (3,0) -- (3,2);
			    \draw[very thick] (3,3) -- (3,4);

			    \path[fill=blue!15] (0,1) -- (3, 2.5) -- (3, 3) -- (0,1.5) -- cycle;
			    \draw[dashed, blue!30] (-.5,.75) -- (3.5,2.75);
			    \draw[dashed, blue!30] (-.5,1.25) -- (3.5,3.25);
			    \draw (2.75,2.75) node[yshift=-3mm] {$T$};
			    \draw (3,2.75) node[below right,xshift=4mm] {slope $\alpha$};
			    
			    \draw[dash pattern=on 8pt off 3pt, red!60, very thick] (-2,0) -- (4.9,4);
			    \draw (-1,.5) node[above,yshift=1mm] {$\ell$};
			\end{tikzpicture}
			\end{center}
			\caption{ \footnotesize 
				A hallway with four doorways and the slope-$\alpha$ tube
				$T=\proj_\alpha^{-1}(D)\cap([0,4]\times\R)$ along with a line
				of sight $\ell$ having slope greater than $\alpha$.
			}
			\label{FigTube}
		\end{figure}

		Since the width of $T$ is $n$ and the height of $T$ is $d_r-d_l>0$, we know there must be lines 
		of sight for every slope in the interval $(\alpha - \frac{d_r-d_l}{n}, \alpha + \frac{d_r-d_l}{n})$.
	\end{proof}

	\begin{corollary}
		\label{PropRationalSight}
		If $H_n$ is an $n$-hallway that admits a line of sight, then $H_n$
		admits a rational line of sight.
	\end{corollary}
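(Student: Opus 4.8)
The plan is to read off the conclusion from Proposition~\ref{PropIntervalSight} together with the density of $\Q$ in $\R$. Since $H_n$ admits a line of sight by hypothesis, Proposition~\ref{PropIntervalSight} supplies an open interval $J\subset\R$ of slopes with the property that for every $\gamma\in J$ there is some $\beta_\gamma$ making $\ell_{\gamma\beta_\gamma}$ a line of sight for $H_n$. (Concretely, in the notation of that proof, $J=(\alpha-\tfrac{d_r-d_l}{n},\,\alpha+\tfrac{d_r-d_l}{n})$, and one checks that the width-$n$, positive-height tube $T$ contains a line of every such slope, so each such $\gamma$ is genuinely realized.)

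Next I would invoke density: every nonempty open interval of real numbers contains a rational point, so choose $\tfrac{p}{q}\in J$ with $p\in\Z$ and $q\in\N$. The line $\ell_{\frac{p}{q}\,\beta_{p/q}}$ guaranteed above then has rational slope, hence is a rational line of sight in the sense of Definition~\ref{DefLineOfSight}, which completes the argument.

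There is essentially no obstacle here — this is a one-step corollary of Proposition~\ref{PropIntervalSight} — so the only point I would be careful about is making sure that proposition is being used in the strong form ``\emph{every} slope in the interval $J$ is realized by some line of sight,'' rather than the weaker ``the set of line-of-sight slopes merely contains an interval.'' The tube construction in the proof of Proposition~\ref{PropIntervalSight} already establishes the strong form, so nothing further is required.
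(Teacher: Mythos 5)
Your argument is correct and is exactly the paper's: Proposition~\ref{PropIntervalSight} provides an interval of realized slopes, and density of $\Q$ in $\R$ gives a rational slope in that interval. Your extra care about reading the proposition in the strong form (every slope in the interval is realized, via the tube $T$) is a reasonable precaution but matches what the proposition's proof already establishes.
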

	Since every interval of real numbers contains a rational number, Corollary \ref{PropRationalSight}
	follows immediately.

	The proof of Proposition \ref{PropIntervalSight} gives some insight into
	what types of hallways admit lines of sight. In particular, a hallway
	admits a line of sight if and only if $D^{\alpha} = \bigcap \proj_\alpha(\{i\}\times D_i)$
	is non-empty for some $\alpha$.

	Suppose $\alpha\in[0,1)$, and consider $1$-hallways.  Recall
		that we always assume $D_0=(0,1)$ is the first doorway.  Now, if $\ell_{\alpha\beta}$
	is a line of sight for a $1$-hallway
	$H_1$, because $0\leq \alpha<1$, there are only two possibilities for $D_1$.  Namely, $D_1=(0,1)$
	or $D_1=(1,2)$.  Since we are assuming $\ell_{\alpha\beta}$ is a line of sight
	for $H_1$, we can completely determine what $D_1$ is by the following procedure:
	Let $s_1 = \proj_\alpha(1,1)$.  
	If $\beta$, the $y$-intercept of $\ell_{\alpha\beta}$, satisfies $\beta<s_1$, then $D_1=(0,1)$.  If $\beta > s_1$ then $D_1=(1,2)$.
	See Figure \ref{FigDetermineNextDoorway} for an illustration. 

		\begin{figure}[h!]
			\begin{center}
			\begin{tikzpicture}[scale=2]
				
				\foreach \x in {0,...,1}
					\foreach \y in {-1,...,2}
					{
						\fill (\x,\y) circle (.3mm) node[below right, yshift=2mm] {\footnotesize $(\x,\y)$};
					}
			    \draw[very thick] (0,-1.3) -- (0,0);
			    \draw[very thick] (0,1) -- (0,2.3);

			    \foreach \y in {0.67,1.67}
			    {
				    \draw[dashed, blue!60, thin] (-1,{-1+\y}) -- (2,{0+\y});
			    }
			    \foreach \y in {1.33}
			    {
				    \draw[dash pattern=on 8pt off 3pt, red!60, very thick] (-1,{-1+\y}) -- (2,{0+\y});
			    }
			    \fill (0,.67) circle (.3mm) node[below right] {$s_1$};
			    \draw (-1,.9) node[above right] {slope $\alpha$};
			\end{tikzpicture}
			\begin{tikzpicture}[scale=2]
				
				\foreach \x in {0,...,2}
					\foreach \y in {-1,...,2}
					{
						\fill (\x,\y) circle (.3mm); % node[below right, yshift=2mm] {\footnotesize $(\x,\y)$};
					}
			    \draw[very thick] (0,-1.3) -- (0,0);
			    \draw[very thick] (0,1) -- (0,2.3);

			    \foreach \y in {0.67,1.67}
			    {
				    \draw[dashed, blue!60, thin] (-1,{-1+\y}) -- (3,{.33+\y});
			    }
			    \foreach \y in {1.33, 1, 2}
			    {
				    \draw[dash pattern=on 8pt off 3pt, red!60, very thick] (-1,{-1+\y}) -- (3,{.33+\y});
			    }
			    \fill (0,.67) circle (.3mm) node[below right,yshift=1mm] {$s_1$};
			    \fill (0,.33) circle (.3mm) node[below right,yshift=1mm] {$s_2$};
			    \fill (0,1.33) circle (.3mm) node[below right,yshift=2mm] {$s_2'$};
			\end{tikzpicture}
			\end{center}
			\caption{ \footnotesize 
				For a slope of $\alpha$, $s_1$ and $s_2$ divide $(0,1)$ into regions based on
				the doorways $D_1$ and $D_2$.
			}
			\label{FigDetermineNextDoorway}
		\end{figure}
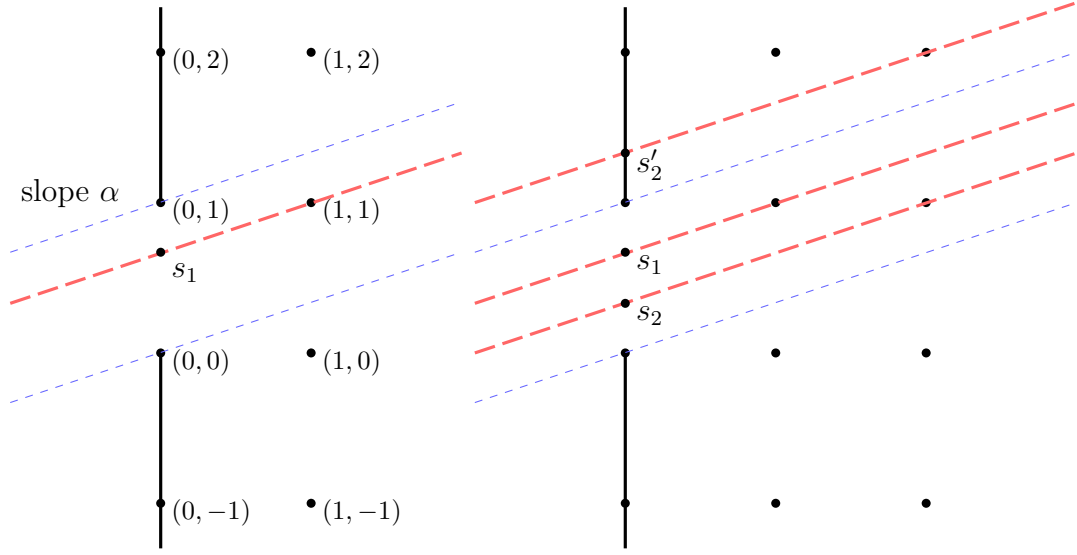

	Inductively, we may determine all possible hallways
	corresponding to lines of sight with a particular slope.  For illustration,
	consider $2$-hallways and assume we have a line of sight $\ell_{\alpha\beta}$.
	If $D_1=(0,1)$, we know $D_2=(0,1)$ or $(1,2)$.  
	To find out which, let $s_2=\proj_\alpha(2,1)$.  If $\beta < s_2$, $D_2=(0,1)$,
	and if $\beta > s_2$, $D_2=(1,2)$.  Similarly, if $D_1=(1,2)$, the options
	for $D_2$ are $(1,2)$ or $(2,3)$.  Now letting $s_2' = \proj_\alpha(2,2)$,
	$\beta < s_2'$ implies $D_2=(1,2)$ and $\beta > s_2'$ implies $D_2=(2,3)$.

	If we continue this process, we will notice that we always
	consider $s=\proj_\alpha(x_0,y_0)$ as a bifurcation point.  That
	is, $s$ allows us to decide which doorway must be open if we presuppose
	a certain line of sight.

	\begin{definition}
		Let $Y_{\alpha, n} = (0,1)\backslash\left(\bigcup_{i\leq n}\proj_\alpha (\{i\}\times \Z)\right)$
		and let $\mathcal Y_{\alpha, n}$ be the partition of $Y_{\alpha, n}$ consisting of its
		connected components.
	\end{definition}

	As the next proposition shows, $\mathcal Y_{\alpha, n}$ exactly classifies which
	sequence of doors must be open for a given line of sight.  In this respect, $\mathcal Y_{\alpha, n}$
	can be seen as generating an equivalence relation on intercepts of lines of sight with slope $\alpha$.

	\begin{proposition}
		\label{PropYPartitions}
		Fix $\alpha\in [0,1)$ and suppose $\ell_{\alpha\beta}$ is
			a line of sight for the $n$-hallway $H_n$ and $\ell_{\alpha\beta'}$ is a line of sight
			for the $n$-hallway $H_n'$.  Then, $\beta,\beta'\in Y$ for some 
			$Y\in \mathcal Y_{\alpha, n}$ if and only if $H_n=H_n'$.
	\end{proposition}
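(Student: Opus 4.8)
The plan is to reduce the statement to the elementary observation, already implicit in the proof of Proposition \ref{PropIntervalSight}, that for a fixed slope $\alpha$ and a hallway $H_n$ with doorways $D_i=(d_i,d_i+1)$, the line $\ell_{\alpha\beta}$ is a line of sight for $H_n$ exactly when $\alpha i+\beta\in(d_i,d_i+1)$ for every $0\le i\le n$; equivalently, exactly when $\beta$ lies in the open interval $I(H_n):=\bigcap_{i=0}^n\proj_\alpha(\{i\}\times D_i)=\bigcap_{i=0}^n(d_i-\alpha i,\,d_i+1-\alpha i)$ (this is the set ``$D$'' appearing in that proof). First I would record what this forces on $\beta$: since $\alpha i+\beta$ lies strictly between the consecutive integers $d_i$ and $d_i+1$ it is never an integer, so $\beta\notin\proj_\alpha(\{i\}\times\Z)$ for every $i\le n$; combined with $\beta\in(d_0,d_0+1)=(0,1)$ from the standing convention $D_0=(0,1)$, this gives $\beta\in Y_{\alpha,n}$, so the phrase ``$\beta\in Y$ for some $Y\in\mathcal Y_{\alpha,n}$'' is automatically meaningful. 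I would likewise note that $d_i=\floor{\alpha i+\beta}$ for every $i$.

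Next I would pin down $I(H_n)$ relative to the partition $\mathcal Y_{\alpha,n}$. The same ``no integer strictly between $d_j$ and $d_j+1$'' argument shows that $I(H_n)$ contains no point of the form $k-\alpha j$ with $k\in\Z$ and $0\le j\le n$: such a point $\beta^*$ would force $d_j<\alpha j+\beta^*=k<d_j+1$. Since also $I(H_n)\subseteq(d_0,d_0+1)=(0,1)$, this yields $I(H_n)\subseteq Y_{\alpha,n}$, and being connected it lies inside a single component of $\mathcal Y_{\alpha,n}$. (In fact the two endpoints of $I(H_n)$ are themselves either $0$, $1$, or points of the removed set $\bigcup_{j\le n}\proj_\alpha(\{j\}\times\Z)$, so $I(H_n)$ is exactly a component of $\mathcal Y_{\alpha,n}$; only the containment is needed below.)

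With these in hand both directions are short. For the ``if'' direction, assume $H_n=H_n'$; then $I(H_n)=I(H_n')$, and $\beta,\beta'$ both lie in it, hence in the one component of $\mathcal Y_{\alpha,n}$ containing it. For the ``only if'' direction, assume $\beta,\beta'$ lie in a common component $Y$. On the interval $Y$ the function $\beta\mapsto\floor{\alpha i+\beta}$ is locally constant --- its only possible jumps occur where $\alpha i+\beta\in\Z$, and no such $\beta$ lies in $Y$ --- hence constant; therefore $d_i=\floor{\alpha i+\beta}=\floor{\alpha i+\beta'}=d_i'$ for all $0\le i\le n$, and since $d_0=d_0'=0$ as well, $H_n=H_n'$.

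The one point requiring care --- and the only place the argument is not purely formal --- is the ``if'' direction: one must verify that $\beta$ and $\beta'$ lie in the \emph{same} component, not merely that each lies in some component. That is precisely what the observation ``$I(H_n)$ is a connected subset of $Y_{\alpha,n}$'' provides, which in turn rests on the fact that the endpoints of $I(H_n)$ are bifurcation points (or $0$, $1$); everything else is bookkeeping.
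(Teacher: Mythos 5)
Your proof is correct. The forward direction is essentially the paper's: both identify the set of admissible intercepts $\bigcap_{i\le n}\proj_\alpha(\{i\}\times D_i)$ with (a connected subset of) a single element of $\mathcal Y_{\alpha,n}$, so that any two intercepts for the same hallway land in the same component. Where you genuinely diverge is the converse. The paper argues by induction on $n$: assuming $D_i=D_i'$ for $i<n$ with $D_{n-1}=(a,a+1)$, it isolates the single bifurcation point $s=\proj_\alpha(n,a+1)$ at which the choice of $D_n$ flips, and observes that $s$ lies on the boundary of the partition, so $\beta$ and $\beta'$ sit on the same side of it. You instead use the closed-form identity $d_i=\floor{\alpha i+\beta}$ and the observation that $t\mapsto\floor{\alpha i+t}$ is constant on each component of $Y_{\alpha,n}$, since its jump points are exactly the removed set $\proj_\alpha(\{i\}\times\Z)$. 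The two arguments carry the same content---the paper's bifurcation point $s=a+1-\alpha n$ is precisely where $\floor{\alpha n+\cdot}$ jumps---but yours dispenses with the induction and treats all $i$ simultaneously, which is cleaner; the paper's inductive, one-doorway-at-a-time formulation has the side benefit of making visible the combinatorial picture (one new cut per doorway) that is reused in Proposition \ref{PropNumberOfPartitionElements}. Both are complete proofs.
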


	\begin{proof}
		Notice that $H_n=H_n'$ if and only if the sequences of doorways for $H_n$ and $H_n'$ are the same.
		
		Suppose $H_n=H_n'$ and that $\ell_{\alpha\beta}$ and $\ell_{\alpha\beta'}$ are lines of
		sight for $H_n$ and $H_n'$, respectively.  By construction $Y=(\proj_{\alpha} H_n)^c=(\proj_{\alpha} H_n')^c
		\in \mathcal Y_{\alpha, n}$, and we must have $\beta,\beta'\in Y$.

		Now suppose that $\beta,\beta'\in Y$ for some $Y\in \mathcal Y_{\alpha, n}$.
		We will proceed by induction.

		The base case is clear.  $H_1=H_1'$, since $\mathcal Y_{\alpha, 1}$ precisely
		partitions the $y$-intercepts for lines of sight through $(D_0,D_1)=\Big((0,1), (0,1)\Big)$
		and $(D_0,D_1)=\Big((0,1), (1,2)\Big)$.

		Assume the proposition holds for $n-1$.  This means that $D_i=D_i'$
		for $i < n$.  Fix $a$ so that $D_{n-1}=D_{n-1}'=(a,a+1)$.
		Since $\alpha\in [0,1)$, there can only be two possibilities
		for $D_n$ or $D_n'$.  Namely, $(a,a+1)$ or $(a+1, a+2)$.
		Let $s=\proj_\alpha(n,a+1)$ and suppose $\ell_{\alpha\gamma}$ is
		a line of sight for $H_n$.  If $\gamma < s$, $D_n=(a,a+1)$
		and if $\gamma > s$, $D_n=(a+1,a+2)$.

		We complete the proof by noticing that $s$ lies on the boundary of a partition
		element of $\mathcal Y_{\alpha, n}$ (or completely outside the interval $(0,1)$).  
		Thus, if $\beta,\beta'\in Y$, we have
		$\beta,\beta'> s$ or $\beta,\beta' < s$, and so $D_n=D_n'$.
	\end{proof}

	Propositions like 
	Proposition \ref{PropYPartitions} can be extended to handle lines of sights with
	slopes in $\R$ without too much trouble, so we will mainly focus on lines of sights
	with slopes in $[0,1)$ to make our arguments simpler.

	\begin{corollary}
		\label{PropUniqueHallway}
		For a fixed line $\ell_{\alpha\beta}$, there is at most one $n$-hallway
		such that $\ell_{\alpha\beta}$ is a line of sight.
	\end{corollary}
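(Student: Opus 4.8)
The plan is to read this off directly from Proposition \ref{PropYPartitions}. Suppose $H_n$ and $H_n'$ are $n$-hallways that each admit the fixed line $\ell_{\alpha\beta}$ as a line of sight; I want to conclude $H_n=H_n'$.

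First I would normalize the slope into $[0,1)$. The integer shear $\sigma_m\colon(x,y)\mapsto(x,\,y-mx)$ is a bijection of $\R^2$ that fixes each vertical line $\{i\}\times\R$ setwise, preserves $\Z^2$, and hence carries an $n$-hallway with doorways $D_i=(d_i,d_i+1)$ to the $n$-hallway with doorways $(d_i-mi,\,d_i-mi+1)$; since $\sigma_m$ fixes $\{0\}\times\R$ pointwise it leaves $D_0=(0,1)$ in place, so the image is again a normalized hallway, and it sends $\ell_{\alpha\beta}$ to $\ell_{\alpha-m,\beta}$. Taking $m=\floor{\alpha}$ replaces $\alpha$ by a slope in $[0,1)$ and, being a bijection, does not affect whether the two hallway images are equal, so we may assume $\alpha\in[0,1)$.

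Next I would verify that $\beta$ is an admissible intercept for Proposition \ref{PropYPartitions}, i.e. that $\beta$ lies in a single element of $\mathcal Y_{\alpha,n}$. Since $D_0=(0,1)$ and $\beta=\alpha\cdot 0+\beta$ must lie in $D_0$, we have $\beta\in(0,1)$. Moreover, for $0\le i\le n$ the line meets the wall $\{i\}\times\R$ at height $\alpha i+\beta$, which cannot be an integer: every integer point of that wall lies in $H_n$ (a unit-length open interval $D_i$ contains no integer), so an integer value of $\alpha i+\beta$ would place a point of $\ell_{\alpha\beta}$ in $H_n$, contradicting that it is a line of sight. Hence $\beta\notin\{k-\alpha i:k\in\Z\}=\proj_\alpha(\{i\}\times\Z)$ for every $i\le n$, so $\beta\in Y_{\alpha,n}$ and therefore $\beta$ lies in exactly one connected component $Y\in\mathcal Y_{\alpha,n}$.

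Finally, applying Proposition \ref{PropYPartitions} with $\beta'=\beta$ — so that the two intercepts trivially lie in the same $Y$ — yields $H_n=H_n'$. I do not expect a genuine obstacle here; the only steps needing care are the slope normalization and the observation that a true line of sight never has its intercept on one of the bifurcation values $\proj_\alpha(\{i\}\times\Z)$, which is exactly the hypothesis that makes Proposition \ref{PropYPartitions} apply. (One could also argue directly, bypassing Proposition \ref{PropYPartitions}: the requirement $\alpha i+\beta\in D_i$ forces $D_i=\big(\floor{\alpha i+\beta},\,\floor{\alpha i+\beta}+1\big)$, so the entire doorway sequence is determined by the pair $(\alpha,\beta)$.)
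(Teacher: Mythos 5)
Your proof is correct and takes essentially the same route as the paper, which simply invokes Proposition \ref{PropYPartitions} with $\beta=\beta'$; your extra care in normalizing the slope into $[0,1)$ and verifying that $\beta$ genuinely lies in $Y_{\alpha,n}$ just makes explicit the hypotheses the paper leaves implicit. The parenthetical direct argument (that $\alpha i+\beta\in D_i$ forces $d_i=\floor{\alpha i+\beta}$) is also valid and arguably the cleanest way to see the claim.
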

	\begin{proof}
		This follows directly from an application of Proposition \ref{PropYPartitions}
		with $\beta=\beta'$.
	\end{proof}

	\begin{proposition}
		\label{PropNumberOfPartitionElements}
		For a fixed $\alpha$, the number of elements in the partition $\mathcal Y_{\alpha, n}$
		is at most $n+1$.
	\end{proposition}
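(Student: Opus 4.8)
The plan is to count the points that subdivide the interval $(0,1)$ directly. The partition $\mathcal Y_{\alpha,n}$ is obtained by removing from $(0,1)$ the set $\bigcup_{i\le n}\proj_\alpha(\{i\}\times\Z)$, so the number of connected components is one more than the number of distinct removed points lying strictly inside $(0,1)$. Hence it suffices to show that, for each fixed $\alpha\in[0,1)$, the set $(0,1)\cap\bigcup_{i\le n}\{\proj_\alpha(i,k):k\in\Z\}$ has at most $n$ elements.

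For a fixed $i$, the values $\proj_\alpha(i,k)=k-\alpha i$ as $k$ ranges over $\Z$ form a translate of $\Z$, so exactly one of them lies in $(0,1)$ unless $-\alpha i$ is already an integer shifted into the boundary, in which case none lies strictly inside. Thus each index $i\in\{0,\ldots,n\}$ contributes at most one point of $(0,1)$. That would give a bound of $n+1$ removed points and hence $n+2$ components, which is too weak by one. The improvement comes from the index $i=0$: since $\alpha\in[0,1)$ we have $\proj_\alpha(0,k)=k\in\Z$, which never lies in the open interval $(0,1)$. So $i=0$ contributes nothing, leaving at most $n$ removed points from $i=1,\ldots,n$, and therefore at most $n+1$ components.

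The one subtlety to handle carefully is the possibility of coincidences: two different indices $i\ne j$ could give the same point $k-\alpha i=k'-\alpha j$ in $(0,1)$, which only makes the count smaller, so the bound $n+1$ still holds (with equality exactly when $\alpha$ is irrational or has denominator larger than $n$, where all $n$ points are distinct). I expect the main thing to get right is simply the bookkeeping of which indices can contribute an interior point — in particular isolating the role of $i=0$ — rather than any real obstacle; the argument is essentially a pigeonhole count on the image of $\{0,\ldots,n\}\times\Z$ under $\proj_\alpha$ intersected with $(0,1)$.
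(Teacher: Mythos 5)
Your proof is correct and follows essentially the same approach as the paper: both rest on the observation that each index $i$ contributes at most one cut point to $(0,1)$ and that $i=0$ contributes none (the paper encodes the latter in the base case $\mathcal Y_{\alpha,0}=\{(0,1)\}$ of an induction, while you count the cut points directly). The difference between your direct count and the paper's induction is purely presentational.
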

	\begin{proof}
		First, notice that since $\proj_\alpha(x,y)-\proj_\alpha(x,y')=y-y'$, for a fixed $i$,
		$(0,1)\cap \proj_\alpha (\{i\}\times \Z)$ contains at most one point. This follows from
		the fact that integers are one unit apart and $(0,1)$ is an open interval of width one.

		Now, we may proceed by induction.  Clearly $\mathcal Y_{\alpha, 0}=\{(0,1)\}$ consists of one
		interval.  Suppose $\mathcal Y_{\alpha, n-1}$ consists of no more than $n$ intervals.  $\mathcal Y_{\alpha, n}$
		can be obtained from $\mathcal Y_{\alpha, n-1}$ by slicing the partition elements of $\mathcal Y_{\alpha, n-1}$
		by the points in $(0,1)\cap \proj_\alpha (\{n\}\times \Z)$.  But, there is at most one
		point in $(0,1)\cap \proj_\alpha (\{n\}\times \Z)$ and so at most one interval in $\mathcal Y_{\alpha, n-1}$ could
		be sliced into two intervals.  Thus the number of elements in $\mathcal Y_{\alpha, n}$ cannot exceed $n+1$.
	\end{proof}

	\begin{corollary}\label{PropNPlus1}
		For a fixed $\alpha$, the number of distinct $n$-hallways having $D_0=(0,1)$
		and admitting a line of sight of slope $\alpha$ is at most $n+1$.
	\end{corollary}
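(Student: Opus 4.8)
The plan is to exhibit an injection from the set of $n$-hallways with $D_0=(0,1)$ admitting a slope-$\alpha$ line of sight into the partition $\mathcal Y_{\alpha, n}$, and then read off the bound $n+1$ from Proposition \ref{PropNumberOfPartitionElements}. Everything is a bookkeeping consequence of the two preceding propositions, so I do not expect a genuine obstacle.

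First I would check that the $y$-intercept of any slope-$\alpha$ line of sight for such a hallway actually lands in $Y_{\alpha, n}$, not merely in $(0,1)$. If $\ell_{\alpha\beta}$ is a line of sight for $H_n$, then at $x=0$ it passes through $(0,\beta)$, which must avoid the wall $\{0\}\times(\R\backslash D_0)$; since $D_0=(0,1)$ this gives $\beta\in(0,1)$. At $x=i$ it passes through $(i,\alpha i+\beta)$, which must lie in $D_i$; because $D_i$ is an open interval with integer endpoints it contains no integer, so $\alpha i+\beta\notin\Z$, i.e. $\beta\notin\proj_\alpha(\{i\}\times\Z)$ for every $0\le i\le n$. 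Hence $\beta\in(0,1)\backslash\bigcup_{i\le n}\proj_\alpha(\{i\}\times\Z)=Y_{\alpha, n}$, so $\beta$ lies in a unique connected component $Y(H_n)\in\mathcal Y_{\alpha, n}$. This is the only step that really uses the structure of the hallway (that the walls contain the integer lattice points), and so it is the one I would write most carefully.

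Next I would verify that $H_n\mapsto Y(H_n)$ is well defined and injective. For well-definedness, if $\ell_{\alpha\beta}$ and $\ell_{\alpha\beta'}$ are both slope-$\alpha$ lines of sight for the same hallway $H_n$, then Proposition \ref{PropYPartitions} with $H_n=H_n'$ forces $\beta$ and $\beta'$ into the same element of $\mathcal Y_{\alpha, n}$, so the assigned component does not depend on which line of sight is chosen. For injectivity, if $H_n\ne H_n'$ are two such hallways with slope-$\alpha$ lines of sight $\ell_{\alpha\beta}$ and $\ell_{\alpha\beta'}$, then the contrapositive of Proposition \ref{PropYPartitions} says $\beta$ and $\beta'$ cannot lie in a common element of $\mathcal Y_{\alpha, n}$, hence $Y(H_n)\ne Y(H_n')$.

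Combining these, the number of distinct $n$-hallways with $D_0=(0,1)$ admitting a line of sight of slope $\alpha$ is at most $\#\mathcal Y_{\alpha, n}$, which is at most $n+1$ by Proposition \ref{PropNumberOfPartitionElements}.
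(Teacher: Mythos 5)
Your proof is correct and follows essentially the same route as the paper: the paper's proof simply cites the one-to-one correspondence between $\mathcal Y_{\alpha,n}$ and slope-$\alpha$-visible $n$-hallways (Proposition \ref{PropYPartitions}) and then applies Proposition \ref{PropNumberOfPartitionElements}. You merely spell out the details the paper leaves implicit, most usefully the verification that the intercept $\beta$ actually lies in $Y_{\alpha,n}$ rather than just in $(0,1)$.
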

	\begin{proof}
		From Proposition \ref{PropYPartitions}, $\mathcal Y_{\alpha, n}$ is in one-to-one correspondence
		with $n$-hallways having lines of sight of slope $\alpha$.  Applying Proposition \ref{PropNumberOfPartitionElements}
		shows $|\mathcal Y_{\alpha, n}|\leq n+1$, which completes the proof.
	\end{proof}

	Recalling the correspondence between $n$-hallways and finite words,
	Corollary \ref{PropNPlus1} can be applied to show that rotation sequences satisfy the complexity
	conditions required of Sturmian sequences.
	We might also ask the total number of $n$-hallways admitting lines
	of slope of any $\alpha\in[0,1)$.

	\begin{theorem}[Mignosi \cite{mignosi}]
		Let $C(n)$ be the number of distinct $n$-hallways with $D_0=(0,1)$ and admitting
		a line of sight with slope in $[0,1)$.  Then,
			\[
				C(n) =1+ \sum_{i=1}^n (n+1-i)\phi(i)
			\]
		where $\phi(i)$ is Euler's totient function, which counts the number of integers in $\{1,\ldots, i\}$
		that are relatively prime to $i$.
	\end{theorem}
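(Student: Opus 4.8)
The plan is to count $n$-hallways admitting a line of sight of slope $\alpha\in[0,1)$ by sorting them according to the *minimal* rational slope that works for them. By Corollary \ref{PropRationalSight} every hallway that can be seen through admits a rational line of sight, and the interval-of-slopes structure from Proposition \ref{PropIntervalSight} means the set of admissible slopes for a fixed hallway is an open interval. So for each hallway $H_n$ that admits a line of sight with slope in $[0,1)$, there is a well-defined ``primitive'' or lowest-denominator rational $p/q$ in its admissible interval; I would fix a rational $\alpha = p/q$ in lowest terms with $0\le p/q<1$ and count exactly those $n$-hallways for which $p/q$ is this canonical slope, then sum over all such rationals together with the degenerate ``slope $0$'' contribution (accounting for the ``$1+$'' term).

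\textbf{Main steps, in order.} First, I would analyze $\mathcal{Y}_{\alpha,n}$ for a rational $\alpha=p/q$ in lowest terms. The projection $\proj_\alpha(i,j) = j - (p/q)i$ lands in $(0,1)$ for at most one integer $j$ per column $i$ (as shown in the proof of Proposition \ref{PropNumberOfPartitionElements}), and these projection points are of the form $\{-pi/q\} \bmod 1$, i.e.\ they repeat with period exactly $q$ in $i$ (since $\gcd(p,q)=1$). Hence among the columns $i=1,\dots,n$ the number of \emph{distinct} bifurcation points in $(0,1)$ is $\min(q-1,\ \text{something})$ — more precisely, for $i=1,\dots,q-1$ one gets $q-1$ distinct points $\{k/q : k=1,\dots,q-1\}$ (when $p/q$ has denominator $q$, these are exactly the points $k/q$, since $\{-pi/q\}$ ranges over all nonzero residues $k/q$ as $i$ ranges over $1,\dots,q-1$), and columns $i\ge q$ contribute nothing new. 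So $|\mathcal{Y}_{p/q,\,n}| = q$ whenever $n\ge q-1$, and these $q$ partition elements correspond bijectively (Proposition \ref{PropYPartitions}) to the $q$ distinct $n$-hallways admitting a slope-$p/q$ line of sight. Second, I would identify, among these $q$ hallways, which ones have $p/q$ as their \emph{canonical lowest-denominator} admissible slope — i.e.\ which ones do \emph{not} already admit a line of sight of slope $p'/q'$ with $q' < q$. A hallway $H_n$ admitting slope $p/q$ corresponds to a partition cell $Y\in\mathcal{Y}_{p/q,n}$, and it also admits some slope with smaller denominator precisely when the whole cell $Y$ (which is one of the $q$ subintervals of $(0,1)$ cut at $k/q$) lies inside an admissible tube of smaller-denominator slope; this should happen for exactly the cells adjacent to the boundary points $0$ and $1$, or should be ruled out entirely when $n$ is just barely large enough, leaving $\phi(q)$ genuinely-new hallways at ``level $q$'' once $n$ is large enough. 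Third, I would track the dependence on $n$: for small $n$ (namely $n<q-1$) the partition $\mathcal{Y}_{p/q,n}$ has only $n+1<q$ cells, so slope $p/q$ contributes no hallways not already counted at lower denominators; and as $n$ grows past $q-1$, the number of \emph{new} hallways first available at denominator exactly $q$ is $\phi(q)$, but the count of how many such hallways of \emph{length} $n$ there are grows linearly — there should be exactly $(n+1-q)\phi(q)+\phi(q) = (n+1-(q-1)-1+1)\phi(q)$... I would reconcile the bookkeeping so that denominator $q=i$ contributes $(n+1-i)\phi(i)$, matching the stated formula, with the lone $0$-slope hallway (all doors aligned, $\Phi(H_n)=(0,\dots,0)$) giving the leading $1$.

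\textbf{The main obstacle} will be the second and third steps: correctly counting, for each denominator $i$ and each length $n$, how many $n$-hallways have $i$ as their minimal admissible denominator, without double-counting and with the right $(n+1-i)$ multiplicity. The geometric picture — a hallway's admissible-slope interval, how the endpoints of that interval relate to Farey neighbors of $p/q$, and exactly when lengthening the hallway by one column splits off a new hallway at a given denominator — is where all the arithmetic of $\phi$ and the linear factor $(n+1-i)$ must come from, and getting the edge cases ($i=1$, $i$ close to $n$, the behavior at the ends of $(0,1)$) exactly right is delicate. An alternative I would keep in reserve is to prove the formula by induction on $n$: show $C(n)-C(n-1) = \sum_{i=1}^{n}\phi(i) = $ (number of Farey fractions of denominator $\le n$ in $[0,1)$, roughly), i.e.\ that going from $n-1$ to $n$ columns creates exactly one new hallway for each primitive slope $p/q$ with $q\le n$, since each such slope's partition $\mathcal{Y}_{p/q,n}$ gains at most one new cell over $\mathcal{Y}_{p/q,n-1}$ and gains exactly one once $n\ge q-1$; then $C(n) = C(0) + \sum_{m=1}^{n}\sum_{i=1}^{m}\phi(i) = 1 + \sum_{i=1}^{n}(n+1-i)\phi(i)$ after switching the order of summation. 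This inductive route sidesteps the canonical-slope accounting and is the version I would actually write up.
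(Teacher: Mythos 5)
First, be aware that the paper does not prove this statement: it is quoted from Mignosi, with Berstel--Pocchiola cited for a geometric proof, so there is no in-paper argument to compare yours against, and your proposal has to stand on its own. Judged that way, your plan aims at the right object --- counting the two-dimensional cells of the arrangement $\mathcal P_n$, equivalently establishing the telescoping identity $C(n)-C(n-1)=\sum_{i=1}^{n}\phi(i)$ --- and several of your preliminary observations are correct, e.g.\ that $|\mathcal Y_{p/q,n}|=\min(n+1,q)$ for $p/q$ in lowest terms, and that the telescoping identity does sum to the stated formula. Your first route, however, cannot work as described: the terms $(n+1-i)\phi(i)$ do \emph{not} count hallways whose minimal admissible denominator is $i$ (already for $n=3$ the minimal-denominator census is $1+2+4+1$ while the formula's terms are $1+3+2+2$), so the term-by-term matching you hope for is false, as you half-suspect.

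The inductive route, which you say you would actually write up, has a genuine gap at its only hard step. You justify $C(n)-C(n-1)=\sum_{i=1}^n\phi(i)$ by saying that each primitive slope $p/q$ with $q\le n$ contributes one new hallway because ``$\mathcal Y_{p/q,n}$ gains exactly one new cell once $n\ge q-1$.'' This is backwards: since the projected points $-pi/q\bmod 1$ repeat with period $q$ in $i$, the fiber partition $\mathcal Y_{p/q,n}$ gains a cell only for $1\le n\le q-1$ and is \emph{constant}, with exactly $q$ cells, for all $n\ge q-1$; so by your own accounting the slopes of denominator at most $n$ would contribute nothing new at step $n$. More fundamentally, you cannot count new hallways by summing new cells of the one-dimensional fibers $\mathcal Y_{\alpha,n}$ over $\alpha$: by Proposition \ref{PropYPartitions} a single hallway occupies an entire two-dimensional cell of $\mathcal P_n$, meeting the fibers over a whole interval of slopes, so such a sum wildly overcounts. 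What is actually needed is to count how many cells of $\mathcal P_{n-1}$ are split when the curve $t=-n\alpha\bmod 1$ is added, i.e.\ to count the arcs into which its $n$ segments are cut by the earlier lines; the identity ``number of arcs equals $\sum_{i\le n}\phi(i)$'' is exactly where the totient arithmetic (and the care needed because several of these lines do meet at common points) must enter, and none of that appears in your sketch. That computation is the substance of the Berstel--Pocchiola proof, not a bookkeeping detail to be reconciled afterwards.
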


	In \cite{mignosi}, Mignosi uses combinatoric properties to count subwords of Sturmian sequences,
	In \cite{berstel}, Berstel and Pocchiola use geometric arguments to arrive at the same conclusion.

	Let's get a slightly better idea of what the set of all $n$-hallways looks like.

	\begin{definition}
		Let $S_n\subset [0,1]\times(0,1)$ be the set of pairs $(\alpha,\beta)$ such that
		$\ell_{\alpha\beta}$ is a line of sight for some $n$-hallway.
		Let $\mathcal P_n$ be the partition of $S_n$ where 
		$(\alpha,\beta)$ and $(\alpha',\beta')$ are in the same partition element
		if $\ell_{\alpha\beta}$ and $\ell_{\alpha'\beta'}$ are lines of sight for the same
		$n$-hallway.
	\end{definition}

	Corollary \ref{PropUniqueHallway} ensures that $\mathcal P_n$ is well defined.  Drawing $\mathcal P_n$
	as a subset of $\R^2$, we see that the vertical fiber of $\mathcal P_n$ with $x$-coordinate $\alpha$
	is precisely $\mathcal Y_{\alpha,n}$.

	Looking at $\mathcal Y_{\alpha,n}$ as a function of $\alpha$, we wee that $\mathcal Y_{\alpha,n}$ must be
	split at the point 
	\[
		f_i(\alpha) = \proj_\alpha(\{i\}\times \Z) = \proj_\alpha(i,0)\mod 1 = -\alpha i\mod 1
	\]
	for every $i\in\{0,\ldots, n\}$.  In other words, $\mathcal P_n$ looks like $[0,1]\times (0,1)$ cut 
	by the lines (mod 1) of slope $-i$ for $i\in\{0,\ldots, n\}$.  See Figure \ref{FigBasicpartition}.

		\begin{figure}[h!]
			\footnotesize
			\begin{center}

			\begin{tikzpicture}[scale=6]
			    \draw[thick] (0,0) grid (1,1);

			    \draw[] (0,1) -- (1,0);
			    
			    \draw[] (0,1) -- (1/2,0);
			    \draw[] (1/2,1) -- (1,0);
			    
			    \draw[] (0,1) -- (1/3,0);
			    \draw[] (1/3,1) -- (2/3,0);
			    \draw[] (2/3,1) -- (1,0);

			    \draw[] (0,1) -- (1/4,0);
			    \draw[] (1/4,1) -- (2/4,0);
			    \draw[] (2/4,1) -- (3/4,0);
			    \draw[] (3/4,1) -- (1,0);
			    
			    \draw[] (0,1) -- (1/5,0);
			    \draw[] (1/5,1) -- (2/5,0);
			    \draw[] (2/5,1) -- (3/5,0);
			    \draw[] (3/5,1) -- (4/5,0);
			    \draw[] (4/5,1) -- (1,0);

			    \draw[] (0,0) -- (1,0) node[right] {$\alpha$};
			    \draw[] (0,0) -- (0,1) node[above] {$t$};

			    \draw[] (1/2,.02) -- (1/2,0) node[below, yshift=-1] {$\frac{1}{2}$};
			    \draw[] (1/3,.02) -- (1/3,0) node[below, yshift=-1] {$\frac{1}{3}$};
			    \draw[] (2/3,.02) -- (2/3,0) node[below, yshift=-1] {$\frac{2}{3}$};
			    \draw[] (0,.02) -- (0,0) node[below, yshift=-1] {$0$};
			    \draw[] (1,.02) -- (1,0) node[below, yshift=-1] {$1$};
			    
			    \draw[] (.02,1) -- (0,1) node[left, xshift=-1] {$1$};
			    \draw[] (.02,1/2) -- (0,1/2) node[left, xshift=-1] {$\frac{1}{2}$};

			\end{tikzpicture}
			\end{center}
			\caption{ \footnotesize 
				The partition $\mathcal P_5$.
			}
			\label{FigBasicpartition}
		\end{figure}
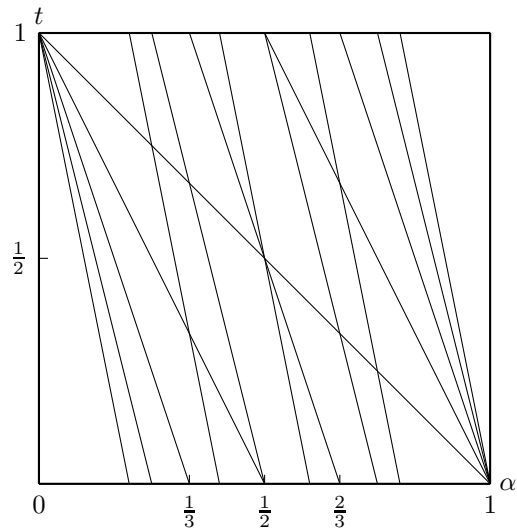

	Using this description of $\mathcal P_n$, we can answer our question about rational lines of sight.

	\begin{theorem}\label{PropBigTheorem}
		Given an $n$-hallway $H_n$ admitting a line of sight, there is a
		rational line of sight $\ell_{\alpha\beta}$ with $\alpha=\frac{p}{q}$ and
		$q\leq n$.
	\end{theorem}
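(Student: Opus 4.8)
The plan is to exploit the explicit combinatorial description of $\mathcal P_n$ as the unit square $[0,1]\times(0,1)$ cut by the family of lines (mod $1$) of slopes $-i$ for $i\in\{0,\ldots,n\}$, and to locate inside the partition element containing $(\alpha,\beta)$ a point whose first coordinate is a fraction $p/q$ with small denominator. Let $H_n$ be a hallway admitting a line of sight, so by Proposition~\ref{PropIntervalSight} there is an open interval $I=(\alpha_{\min},\alpha_{\max})$ of slopes admitting lines of sight for $H_n$, and the corresponding region in $\mathcal P_n$ is the connected piece $P$ cut out by the chords of slope $-i$, $i=0,\ldots,n$. I want to show $P$ contains a point with first coordinate $p/q$, $q\le n$. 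Since the vertical fibers of $P$ are exactly the partition elements $\mathcal Y_{\alpha,n}$ and each is a nonempty open interval for $\alpha\in I$, it suffices to show that $I$ contains a rational $p/q$ with $q\le n$.

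The heart of the argument is therefore: \emph{if an open interval $I\subseteq(0,1)$ contains no rational of denominator $\le n$, then $I$ cannot be the slope-interval of an $n$-hallway admitting a line of sight}. Equivalently, I would bound the length of $I$ and then argue directly. The key observation is that the endpoints $\alpha_{\min},\alpha_{\max}$ of $I$ arise because the ``tube'' $T=\proj_\alpha^{-1}(D)\cap([0,n]\times\R)$ from the proof of Proposition~\ref{PropIntervalSight} degenerates: at $\alpha=\alpha_{\min}$ or $\alpha_{\max}$ the intersection $D=\bigcap_i \proj_\alpha(\{i\}\times D_i)$ collapses to a single point, meaning the line of that slope through a doorway passes exactly through a lattice endpoint of some doorway $D_j$. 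Concretely, at an endpoint the line $\ell$ satisfies $\alpha\cdot j + \beta \in \Z$ for some $j\in\{1,\ldots,n\}$ while also $\alpha\cdot k+\beta\in\Z$ for another index $k$ (the two constraints that pin down $D$ to a point), and subtracting gives $\alpha(j-k)\in\Z$, i.e. $\alpha=\frac{m}{j-k}$ with $|j-k|\le n$. Thus \textbf{both endpoints of $I$ are rationals with denominator at most $n$} (or are $0$, resp.\ the relevant boundary), and hence $I$ itself is an interval between two such fractions.

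This is where Question~\ref{QuestRat} bites: having rational endpoints of denominator $\le n$ does not immediately give a rational \emph{interior} point of denominator $\le n$ — a priori $I=(p/q,p'/q')$ with $q,q'\le n$ might be a Farey-neighbor gap containing no fraction of denominator $\le n$ in its interior. I expect this to be the main obstacle. To resolve it I would invoke the mediant/Stern–Brocot structure: if $p/q<p'/q'$ are consecutive in the Farey sequence $F_n$ then $p'q-pq'=1$, and their mediant $\frac{p+p'}{q+q'}$ has denominator $q+q'$ which can exceed $n$, so there may genuinely be no interior fraction of denominator $\le n$. The way out is that $\alpha=p/q$ \emph{itself}, an endpoint, must actually be an admissible slope, not merely a limit: I would show that when a doorway configuration admits a line of sight for all slopes in $(p/q,\alpha_{\max})$, it also admits one of slope exactly $p/q$ — because the line realizing slope $p/q$ through the doorways can be taken with $\alpha j+\beta\notin\Z$ for all $j\le n$ by perturbing $\beta$ within the (open, nonempty) fiber $\mathcal Y_{p/q,n}$, using Proposition~\ref{PropYPartitions} to confirm the resulting hallway is the same $H_n$. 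Then $p/q$ with $q\le n$ is the desired rational line of sight.

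So the step order is: (1) recall $I=(\alpha_{\min},\alpha_{\max})$ from Proposition~\ref{PropIntervalSight} and reduce to finding a suitable rational in $\bar I$; (2) analyze the degeneration of the tube $T$ at the endpoints to show each endpoint is $0$ or a fraction $m/d$ with $1\le d\le n$; (3) handle the closed-versus-open issue by showing the rational endpoint slope is genuinely admissible — choose $\beta$ in the open fiber $\mathcal Y_{\alpha_{\min},n}$ avoiding the finitely many values making $\alpha_{\min}j+\beta$ an integer, and confirm via Proposition~\ref{PropYPartitions} that this recovers the same hallway $H_n$; (4) conclude. The main work — and the point where the hypothesis $q\le n$ rather than $q\le n+1$ or so must be used carefully — is step (2)–(3): pinning down exactly which denominators can occur and verifying the endpoint slope is attained.
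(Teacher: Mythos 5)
Your setup matches the paper's: both arguments work inside $\mathcal P_n$, reduce to finding a point of the partition element $P$ whose first coordinate is $p/q$ with $q\le n$, and both use the same computation (intersecting two cutting lines of slopes $-a,-a'$ with $a,a'\in\{0,\ldots,n\}$) to see that the corners of $P$ have $x$-coordinates of denominator at most $n$. You also correctly isolated the danger point: the open slope interval $I=(\alpha_{\min},\alpha_{\max})$ could a priori be a Farey gap. The problem is your proposed escape in step (3).

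The claim that the endpoint slope $\alpha_{\min}=p/q$ ``must actually be an admissible slope, not merely a limit'' is false. The set $P=\{(\alpha,\beta):\ell_{\alpha\beta}\text{ is a line of sight for }H_n\}$ is open, since each condition $i\alpha+\beta\in(d_i,d_i+1)$ is a strict inequality; and since no cutting line is vertical, when $\alpha_{\min}\in(0,1)$ the leftmost point of the closure of $P$ is a single vertex, so the fiber of $P$ over $\alpha_{\min}$ is empty and $H_n$ admits no line of sight of slope $\alpha_{\min}$ at all. Concretely: for $n=1$ with $D_0=(0,1)$, $D_1=(1,2)$ the slope interval is $(0,1]$ and the endpoint $0=0/1$ is not admissible; for $n=2$ with doorways $(0,1),(0,1),(1,2)$ the slope interval is $(0,1)$ and \emph{neither} endpoint is admissible (the theorem holds there only because of the interior point $1/2$). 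Your perturbation of $\beta$ inside $\mathcal Y_{p/q,n}$ cannot rescue this: by Proposition \ref{PropYPartitions} each element of $\mathcal Y_{p/q,n}$ corresponds to some hallway admitting a slope-$p/q$ line of sight, and when $p/q$ is an endpoint of $I$ none of those hallways is $H_n$ --- every choice of $\beta$ lands you in a neighboring partition element. The paper closes the gap differently: away from the two extremal triangles, $P$ is a convex polygon with no vertical edges, hence has at least three vertices, and therefore has a vertex whose $x$-coordinate lies strictly between $\alpha_{\min}$ and $\alpha_{\max}$; the vertical line through that \emph{middle} corner meets the interior of $P$, so its $x$-coordinate $p/q$ (with $q\le n$) is genuinely attained. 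In particular the Farey-gap scenario you worried about never occurs, but that is a consequence of the middle-corner argument, not something your endpoint argument establishes.
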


	\begin{proof}
		Fix an $n$-hallway $H_n$ admitting a line of sight at let $P\in \mathcal P_n$ be
		the corresponding partition element.  That is, for every $(\alpha,\beta)\in P$, 
		$\ell_{\alpha\beta}$ is a line of sight for $H_n$.

		Our proof would be complete if we could show that $P$ contained a point $(\alpha,\beta)$
		where $\alpha=\frac{p}{q}$ with $q\leq n$.  To this end, consider the corners of $P$,
		when $P$ is interpreted as a polygon.  Since $\mathcal P_n$ is formed by cutting
		$[0,1]\times (0,1)$ by lines of slope $-1,\ldots,-n$, the edges of $P$ are segments
		of lines of the same slope and so the corners are intersections of such lines.

		We will now compute the intersection of two lines of the form $y=-ax\mod 1$.
		Notice that any connected segment of the graph of such a line is identical to the graph
		of the line $y=-ax+b$ restricted to $[0,1]\times(0,1)$ for some $1\leq b\leq a$.

		Let $L_{-a,b}(x) = -ax+b$ be a line with slope $-a$ and $y$-intercept $b$.  Then,
		the intersection of the graphs of $L_{-a,b}$ and $L_{-a',b'}$ occurs at
		\[
			x=\frac{b'-b}{a'-a}\qquad\text{and}\qquad L_{-a,b}(x)=\frac{a'b-ab'}{a'-a}.
		\]

		Now, if $a,a'\in\{0,\ldots, n\}$, $|a'-a|\leq n$.  This shows that the $x$-coordinate
		of every corner of $P$ is of the form $\frac{p}{q}$ with $q\leq n$.

		To complete the proof, notice that either $P$ is one of the two extreme cases---the
		triangles with corners $(0,0)$, $(1,1)$, $(1/n,0)$ or $(1,0)$, $(1,1)$, $(1/n,1)$---or
		$P$ has a corner directly above or below its interior (see Figure \ref{FigBasicpartition}).
		If $P$ is the left extremal triangle, then there is a line of sight $\ell_{0\beta}$
		for some $\beta$ and if $P$ is the right extremal triangle, there is a line of sight $\ell_{1\beta}$
		for some $\beta$.

		Finally, if $P$ has a corner with coordinates $(\alpha,b)$ above or below its interior,
		then there must be some point $(\alpha,\beta)\in P$.  Thus $\ell_{\alpha\beta}$ is a line
		of sight and as shown, $\alpha=\frac{p}{q}$ with $q\leq n$.

	\end{proof}

\section{Infinite Hallways}

	Diagrams like Figure \ref{FigBasicpartition} show that if an $n$-hallway admits
	a line of sight $\ell_{\alpha\beta}$, then it admits lines of sign $\ell_{\gamma\delta}$
	for a host of real numbers $\gamma$ and $\delta$.  However, things become a bit
	more restricted when we pass to infinite hallways.

	An infinite hallway is defined analogously to a finite hallway and can be thought of as
	the union of finite hallways that get longer and longer.  
	It would now seem natural to say that an infinite hallway $H^\infty$ has a line
	of sight if and only if $\ell_{\alpha\beta}\cap H^\infty=\emptyset$ for some $\alpha$ and $\beta$.  However, this definition
	rules out a very desirable property.

	\emph{Desirable Property}:
	\vspace{-1em}
	\begin{quote}
		If $H^\infty$ is an infinite hallway admitting a line of sight, then
		there exists a doorway $D_{-1}=(d_{-1},d_{-1}+1)$ such that the infinite
		hallway $\{-1\}\times (\R\backslash D_{-1})\cup H^{\infty}$ admits a line of sight.
	\end{quote}

	That is, if a hallway admits a line of sight, we should be able to add a doorway to it and
	have it still admit a line of sight.  In the finite hallway case, this is always true 
	and there are always infinitely many lines of sight.  However, for infinite hallways,
	there may be a unique line of sight and the na\"ive formulation of infinite hallways
	does not always allow a door to be added while preserving visibility.  For example, consider the
	infinite hallway $H^\infty$ whose $i$th doorway is $D_i=(\floor{n\pi},\floor{n\pi}+1)$ for $i\geq 1$
	and where $D_0$ is undefined.  As will be shown in Theorem \ref{PropInfiniteHallwaysUniqueSlope},
	$\ell_{\pi0}$ is the only line such that $\ell_{\pi0}\cap H^\infty=\emptyset$.  However, $\ell_{\pi0}$
	contains the point $(0,0)$ on the integer lattice and so there is no acceptable choice of $D_0$ if we
	would like $H^\infty$ to admit a line of sight (since every doorway excludes every lattice point).

	We will solve this issue by introducing infinitesimals.

	\begin{definition}[Infinitesimals]
		Let $\epsilon$ represent a positive infinitesimal.  Formally, let $\R^\epsilon = \{a+b\epsilon:a,b\in\R\}$
		be the two-dimensional vector space with basis $\{1,\epsilon\}$ over the field $\R$ endowed with the following
		total order:
		\[
			a+b\epsilon < c+d\epsilon \qquad\text{if}\qquad a<c\qquad \text{or}\qquad a=c\text{ and } b<d
		\]
		and 
		\[
			a+b\epsilon = c+d\epsilon \qquad\text{if}\qquad a=c\text{ and } b=d.
		\]
		
		A number $r=a+b\epsilon\in\R^\epsilon$ is called \emph{real} if $b=0$.
	\end{definition}

	This all amounts to saying that $\epsilon$ satisfies
	$0<\epsilon<a$ for all positive real numbers $a$ and that addition of infinitesimals and
	multiplication of infinitesimals by real numbers makes sense.  We define open and closed
	intervals in the usual way: 
	the open interval
	$(a,b)\subseteq \R^\epsilon$ is defined as $(a,b)=\{r\in\R^\epsilon:a<r<b\}$ and the closed
	interval $[a,b]\subset \R^\epsilon$ is defined as $[a,b]=\{r\in\R^\epsilon:a\leq r\leq b\}$
	and in general we endow $\R^\epsilon$ with the order topology.

	Now we will precisely define what it means to be an infinite hallway, systematically replacing
	$\R$ (in the finite case) with $\R^\epsilon$ (in the infinite case).

	\begin{definition}[Infinite Hallway]
		Let
		$D_i=(d_i,d_i+1)\subset \R^\epsilon$ for $d_i\in\Z$ be a sequence of open unit intervals and define
		the corresponding infinite hallway $H^\infty$ as
		\[
			H^\infty = \bigcup_{i\in\Z} \{i\}\times (\R^\epsilon\backslash D_i).
		\]
		We will notate the restriction of $H^\infty$ to the first $n$ hallways it contains by
		\[
			H_n^\infty = H^\infty\cap ([0,n]\times\R^\epsilon).
		\]
	\end{definition}

	\begin{definition}[Infinite Lines]
		For $\alpha,\beta\in\R$, define the \emph{infinite line} $\ell_{\alpha\beta}^\epsilon\subset \R\times \R^\epsilon$ by
		\[
			\ell_{\alpha\beta}^\epsilon = \{(x, \alpha (x+t\epsilon)+\beta):x,t\in\R\}.
		\]
	\end{definition}
	
	Given a subset $X\subset\R\times\R^\epsilon$, for any $t\in\R^\epsilon$, we define $X+t=\{(a,b+t):(a,b)\in X\}$.
	In this notation, we can alternatively define $\ell^\epsilon_{\alpha\beta} = \bigcup_{t\in\R} \ell_{\alpha\beta}+t\epsilon$.

	Infinite lines are ``fattened up'' lines.  As such, we need to slightly modify 
	how we define a line of sight.  Whereas, $\ell_{\alpha\beta}\cap H=\emptyset$ captures
	the idea ``no light is blocked,'' we would like to capture the idea ``not all light is blocked.''

	\begin{notation}
		Given a line or infinite line $\ell_{\alpha\beta}^*$ and a hallway or infinite hallway $H^*$, the
		\emph{visibility operator} $\vee$ is defined as
		\[
			\ell_{\alpha\beta}^*\vee H^* = \text{real part of }(\proj_{\alpha}\ell_{\alpha\beta}^*)\cap (\proj_{\alpha}H^*)^c.
		\]
	\end{notation}

	\begin{definition}[Infinite Line of Sight]
		The infinite line $\ell_{\alpha\beta}^\epsilon$ is a \emph{line of sight for the infinite hallway $H^\infty$}
		if $\ell_{\alpha\beta}^\epsilon\vee H^\infty\neq \emptyset$.
	\end{definition}

	Notice that infinite lines of sight are still defined by real parameters, they are just
	infinitesimally ``fattened up.''  
	To see this, consider the following example.  Let $H^\infty$ be the infinite hallway
	with doorways $D_i=(i,i+1)$.  $H^\infty$ has infinite lines of sight $\ell_{1\beta}^\epsilon$ for every
	$\beta\in[0,1]$.  In particular, $\ell_{10}^\epsilon$ and $\ell_{11}^\epsilon$ are infinite lines of sight, but we
	would not consider the real lines $\ell_{10}$ and $\ell_{11}$ to be lines of sight.

	From now on we will refer to infinite lines of sight simply
	as lines of sight and use the term \emph{real line of sight} if we need to draw a careful
	distinction between infinite and non-infinite lines of sight.

	\begin{proposition}\label{PropLittleNudge}
		If $H^\infty$ is an infinite hallway with line of sight $\ell_{\alpha\beta}^\epsilon$,
		then either $(\ell_{\alpha\beta}+\epsilon)\cap H^\infty\neq \emptyset$ or  
		$(\ell_{\alpha\beta}-\epsilon)\cap H^\infty\neq \emptyset$.
	\end{proposition}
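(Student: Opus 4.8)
I would prove the contrapositive: assume $(\ell_{\alpha\beta}+\epsilon)\cap H^\infty=\emptyset$ \emph{and} $(\ell_{\alpha\beta}-\epsilon)\cap H^\infty=\emptyset$, and argue that $\ell^\epsilon_{\alpha\beta}$ cannot be a line of sight. The first step is pure arithmetic in $\R^\epsilon$. Since every wall of $H^\infty$ sits over an integer abscissa, $(\ell_{\alpha\beta}+\epsilon)\cap H^\infty=\emptyset$ says exactly that $\alpha i+\beta+\epsilon\in D_i=(d_i,d_i+1)$ for all $i\in\Z$; comparing $\alpha i+\beta+\epsilon$ (real part $\alpha i+\beta$, infinitesimal part $+1$) against the real endpoints $d_i,d_i+1$ under the order on $\R^\epsilon$ gives $d_i\le\alpha i+\beta<d_i+1$ for all $i$. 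The mirror computation turns $(\ell_{\alpha\beta}-\epsilon)\cap H^\infty=\emptyset$ into $d_i<\alpha i+\beta\le d_i+1$ for all $i$. Intersecting these, $d_i<\alpha i+\beta<d_i+1$ for every $i\in\Z$; so the un-fattened line $\ell_{\alpha\beta}$ passes through the open interior of each doorway, touches no lattice point, and hence $\ell_{\alpha\beta}\cap H^\infty=\emptyset$ — it is already a real line of sight.

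The task then reduces to: for an infinite hallway $H^\infty$, if $\ell^\epsilon_{\alpha\beta}$ is a line of sight then $\ell_{\alpha\beta}$ cannot thread the open interior of every doorway. To make this concrete I would unwind the visibility operator. For $\alpha\ne 0$ one has $\proj_\alpha\ell^\epsilon_{\alpha\beta}=\{\beta+s\epsilon:s\in\R\}$ and $(\proj_\alpha H^\infty)^c=\bigcap_{i\in\Z}\big(d_i-\alpha i,\;d_i+1-\alpha i\big)$, so ``$\ell^\epsilon_{\alpha\beta}$ is a line of sight'' means that some $\beta+s\epsilon$ lies in all of those open unit intervals of $\R^\epsilon$ (the flat case $\alpha=0$, where $\proj_0\ell^\epsilon_{0\beta}=\{\beta\}$, is easier and I would dispose of it first). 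The hypothesis that $\ell_{\alpha\beta}$ threads every open doorway says $\beta$ itself, with zero infinitesimal part, already sits strictly inside each $\big(d_i-\alpha i,\;d_i+1-\alpha i\big)$, so I would look for the obstruction in how the \emph{infinitely many} indices $i\in\Z$ constrain that intersection once it is taken in $\R^\epsilon$ rather than in $\R$.

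The heart of the matter — and the step I expect to be the real obstacle — is producing that obstruction: showing that no infinite hallway can simultaneously admit $\ell^\epsilon_{\alpha\beta}$ as a line of sight and have $\ell_{\alpha\beta}$ pass through the open interior of every doorway. Closing this has to draw on the infinite-hallway machinery of the surrounding development; my expectation is that it rests on the fact that the infinite hallways under consideration possess no real (non-infinitesimal) line of sight at all — exactly the deficiency the passage to $\R^\epsilon$ was introduced to repair — which would make the ``strictly interior to every doorway'' alternative impossible in the relevant setting and force one of $\ell_{\alpha\beta}\pm\epsilon$ to graze a doorway endpoint, i.e.\ a point of the integer lattice, so that it meets $H^\infty$. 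Everything apart from this step is routine order-arithmetic in $\R^\epsilon$.
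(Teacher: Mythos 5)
The step you single out as ``the heart of the matter'' cannot be closed, because the statement as printed is false: the two occurrences of $\neq\emptyset$ in its conclusion are typos for $=\emptyset$. Your own computation shows why. Take the paper's example of the infinite hallway $H^\infty$ with doorways $D_i=(i,i+1)$, which admits the line of sight $\ell^\epsilon_{1\beta}$ with $\beta=\tfrac{1}{2}$: since $i<i+\tfrac{1}{2}\pm\epsilon<i+1$ for every $i$, both $(\ell_{1\beta}+\epsilon)\cap H^\infty$ and $(\ell_{1\beta}-\epsilon)\cap H^\infty$ are empty, so the literal disjunction ``$\neq\emptyset$ or $\neq\emptyset$'' fails. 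Every later invocation of Proposition \ref{PropLittleNudge} --- in Proposition \ref{PropDesireableProperty}, Lemma \ref{PropDinterval}, Proposition \ref{PropQcAdmitsLinesofSight}, and Theorem \ref{ThmHallwaysEquivSturms} --- uses the conclusion ``$(\ell_{\alpha\beta}+\epsilon)\cap H^\infty=\emptyset$ or $(\ell_{\alpha\beta}-\epsilon)\cap H^\infty=\emptyset$,'' which is the intended statement. Your conjecture that the hallways in question admit no real line of sight is not among the hypotheses and is false in general, so no amount of infinite-hallway machinery will produce the obstruction you are looking for; you correctly identified an unfillable gap, but the fault lies in the statement rather than in your argument.

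The consolation is that your order arithmetic already proves the intended statement, by essentially the paper's own route. Unwinding the visibility operator as you began to do, $\ell^\epsilon_{\alpha\beta}\vee H^\infty\neq\emptyset$ means exactly that $(\ell_{\alpha\beta}+r\epsilon)\cap H^\infty=\emptyset$ for some real $r$. Your computation shows that whether $\alpha i+\beta+r\epsilon$ lies in $(d_i,d_i+1)$ depends only on the sign of $r$: for $r>0$ it is equivalent to $d_i\leq\alpha i+\beta<d_i+1$, for $r<0$ to $d_i<\alpha i+\beta\leq d_i+1$, and for $r=0$ to the strict double inequality, which implies both of the others. Hence the witness $r$ may be replaced by $+1$ whenever $r\geq 0$ and by $-1$ whenever $r\leq 0$, yielding $(\ell_{\alpha\beta}+\epsilon)\cap H^\infty=\emptyset$ or $(\ell_{\alpha\beta}-\epsilon)\cap H^\infty=\emptyset$. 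This is precisely the paper's argument, whose displayed conclusion carries the same $\neq$-for-$=$ typo as the statement itself.
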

	\begin{proof}
		Observe that if $\ell^\epsilon_{\alpha\beta}\vee H^\infty\neq \emptyset$,
		then for some $r\in\R$ we have $(\ell_{\alpha\beta}+r\epsilon)\cap H^\infty = \emptyset$.
		If $r\geq 0$, then we must have $(\ell_{\alpha\beta}+\epsilon)\cap H^\infty\neq \emptyset$
		and if $r\leq 0$ we must have $(\ell_{\alpha\beta}-\epsilon)\cap H^\infty\neq \emptyset$.
	\end{proof}

	Next we will show that the using infinite lines of sight gives us our desired property.
	In fact, it gives us something slightly stronger.
	\begin{proposition}\label{PropDesireableProperty}
		Suppose $\hat H^\infty$ is an infinite hallway missing its first door and admitting
		a line of sight $\ell_{\alpha\beta}^\epsilon$.  If $D_i$ for $i\geq 1$ are the doorways
		for $\hat H^\infty$, then there exists a doorway $D_0$ such that the infinite hallway
		$H^\infty$ with doorways $D_i$ for $i\geq 0$ admits the line of sight $\ell_{\alpha\beta}^\epsilon$.
	\end{proposition}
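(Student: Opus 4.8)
The plan is to translate ``admits a line of sight'' into a concrete membership statement in $\R^\epsilon$, observe that adjoining the door $D_0$ contributes exactly one extra constraint, and then show that this constraint can always be met by choosing $d_0\in\Z$ appropriately, possibly after replacing the witnessing parameter $t$.

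First I would record the relevant computation. Since $\proj_\alpha(x,y)=y-\alpha x$, we have $\proj_\alpha\ell^\epsilon_{\alpha\beta}=\{\beta+\alpha t\epsilon:t\in\R\}$, and for any hallway with doors $D_i=(d_i,d_i+1)$ one gets $(\proj_\alpha H)^c=\bigcap_i(d_i-\alpha i,\ d_i-\alpha i+1)$, an intersection of open unit intervals in $\R^\epsilon$. Unpacking the definitions (a subset of $\R^\epsilon$ has nonempty real part iff it is nonempty), $\ell^\epsilon_{\alpha\beta}$ is a line of sight for a hallway with doors $\{D_i\}$ precisely when there is a real $t$ with $\beta+\alpha t\epsilon\in(d_i-\alpha i,\ d_i-\alpha i+1)$ for every index $i$. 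For $\hat H^\infty$ this ranges over $i\ge 1$; forming $H^\infty$ appends the single requirement $\beta+\alpha t\epsilon\in(d_0,d_0+1)$, since $\proj_\alpha$ fixes the fiber over $0$. So it suffices to find an integer $d_0$ and a real $t$ satisfying all these conditions at once.

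Fix a witness $\hat t$ for $\hat H^\infty$ (one exists since $\hat H^\infty$ admits $\ell^\epsilon_{\alpha\beta}$) and set $\hat r=\alpha\hat t$. If $\beta\notin\Z$, I would take $d_0=\floor\beta$ and keep $t=\hat t$: the strict inequalities $\floor\beta<\beta<\floor\beta+1$ in $\R$ are not broken by adding the infinitesimal $\hat r\epsilon$, so $\beta+\hat r\epsilon\in(\floor\beta,\floor\beta+1)$, while the conditions for $i\ge 1$ are untouched. If $\beta\in\Z$, note first that $\alpha\ne 0$: otherwise a line of sight for $\hat H^\infty$ would force the integer $\beta$ to lie in some $(d_i,d_i+1)$, which is impossible. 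Next I would arrange $\hat r\ne 0$: if $\hat r=0$ then $\beta$ is strictly interior to each $(d_i-\alpha i,\ d_i-\alpha i+1)$ in $\R$, so $\beta+\alpha t\epsilon$ stays interior to all of them for every real $t$, and I may replace $\hat t$ by any $t$ with $\alpha t\ne 0$. Finally, with $\hat r\ne 0$, set $d_0=\beta$ when $\hat r>0$ and $d_0=\beta-1$ when $\hat r<0$; checking the two one-sided inequalities in $\R^\epsilon$ then gives $\beta+\hat r\epsilon\in(d_0,d_0+1)$. In every case, $H^\infty$ with $D_0=(d_0,d_0+1)$ admits $\ell^\epsilon_{\alpha\beta}$, witnessed by the (possibly updated) parameter $t$.

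The step I expect to be the crux is the case $\beta\in\Z$: here the unfattened line $\ell_{\alpha\beta}$ runs through the lattice point $(0,\beta)$, and since no doorway may contain a lattice point there is literally no room to place $D_0$ on the real line. The resolution is precisely that an infinite line of sight carries genuine infinitesimal slack --- a nonzero $\hat r$, available after possibly switching witnesses --- which is exactly enough to seat $D_0$ just above or just below $(0,\beta)$. Everything else is routine arithmetic with the order on $\R^\epsilon$, used through the single fact that adding a term $\alpha t\epsilon$ cannot reverse a strict inequality between real numbers.
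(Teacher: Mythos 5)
Your proof is correct and follows essentially the same route as the paper: both arguments reduce to choosing $D_0$ at $(\floor{\beta},\floor{\beta}+1)$ or one unit below, depending on the sign of the infinitesimal slack (your $\hat r=\alpha\hat t$ plays exactly the role of the nudge $\pm\epsilon$ in the paper's Proposition~\ref{PropLittleNudge}, which you re-derive inline). If anything, your case split on $\beta\in\Z$ versus $\beta\notin\Z$ is slightly more careful than the paper's two-case argument, which tacitly assumes the downward-nudge case forces $D_0=(\floor{\beta}-1,\floor{\beta})$ even when $\beta\notin\Z$.
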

	\begin{proof}
		Suppose $D_i$ for $i\geq 1$ and $\hat H^\infty$ are as in the statement of the proposition.
		Given a definition for $D_0$, let $H^\infty$ be the infinite hallway with doorways $D_i$ for $i\geq 0$.
		Let $z=\floor{\beta}$.  From Proposition \ref{PropLittleNudge}
		we know either $(\ell_{\alpha\beta}+\epsilon)\cap \hat H^\infty=\emptyset$ or $(\ell_{\alpha\beta}-\epsilon)\cap \hat H^\infty=\emptyset$.

		Suppose $(\ell_{\alpha\beta}+\epsilon)\cap \hat H^\infty=\emptyset$.  In this case,
		define $D_0=(z,z+1)$.  We now have $z+1>\proj_{\alpha} (\ell_{\alpha\beta}+\epsilon) = \beta+\epsilon > z$,
		and so $(\ell_{\alpha\beta}+\epsilon)\cap H^\infty=\emptyset$.  It immediately follows
		that $\ell_{\alpha\beta}^\epsilon$ is a line of sight for $H^\infty$.
		
		If $(\ell_{\alpha\beta}-\epsilon)\cap \hat H^\infty=\emptyset$, a similar argument shows
		that if $D_0=(z-1,z)$, then $H^\infty$ admits the line of sight $\ell^\epsilon_{\alpha\beta}$.
	\end{proof}

	\begin{theorem}\label{PropInfiniteHallwaysUniqueSlope}
		Let $H^\infty$ be an infinite hallway.  If $\ell_{\alpha\beta}^\epsilon$ and
		$\ell_{\gamma\delta}^\epsilon$ are both lines of sight for $H^\infty$, then $\alpha=\gamma$.
		In other words, all lines of sight for $H^\infty$ have the same slope.
	\end{theorem}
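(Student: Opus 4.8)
The plan is to extract from ``$\ell_{\alpha\beta}^\epsilon$ is a line of sight'' a single linear inequality constraining the doorway positions, and then to pit two such inequalities against each other over all integer indices.

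First I would unwind the definitions. Write $D_i=(d_i,d_i+1)$. Since $\proj_\alpha(i,y)=y-\alpha i$, we get $\proj_\alpha H^\infty=\bigcup_{i\in\Z}\bigl(\R^\epsilon\setminus(d_i-\alpha i,\,d_i+1-\alpha i)\bigr)$, hence $(\proj_\alpha H^\infty)^c=\bigcap_{i\in\Z}(d_i-\alpha i,\,d_i+1-\alpha i)$; and every point of $\proj_\alpha\ell_{\alpha\beta}^\epsilon$ has the form $\beta+c\epsilon$ with $c\in\R$ (it is $\{\beta\}$ when $\alpha=0$ and the whole infinitesimal band $\beta+\R\epsilon$ otherwise). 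Now $\ell_{\alpha\beta}^\epsilon\vee H^\infty\neq\emptyset$ implies the set $(\proj_\alpha\ell_{\alpha\beta}^\epsilon)\cap(\proj_\alpha H^\infty)^c$ is nonempty, so there is some $w=\beta+c\epsilon$ lying in $(d_i-\alpha i,\,d_i+1-\alpha i)$ for every $i\in\Z$. Comparing real parts in $d_i-\alpha i<\beta+c\epsilon<d_i+1-\alpha i$ (using that $a<b+c\epsilon$ with $a,b$ real forces $a\le b$) yields
\[
	\alpha i+\beta-1\le d_i\le\alpha i+\beta\qquad\text{for every }i\in\Z .
\]
The infinitesimal $c$ only governs which boundary points are admissible and is irrelevant for this weaker consequence.

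Next I would apply this to both lines of sight: from $\ell_{\alpha\beta}^\epsilon$ we get $\alpha i+\beta-1\le d_i$, and from $\ell_{\gamma\delta}^\epsilon$ we get $d_i\le\gamma i+\delta$, so $(\alpha-\gamma)i\le\delta-\beta+1$ for every $i\in\Z$. The right-hand side is a fixed constant, whereas if $\alpha\neq\gamma$ the left-hand side is unbounded above (let $i\to+\infty$ if $\alpha>\gamma$, or $i\to-\infty$ if $\alpha<\gamma$). This contradiction forces $\alpha=\gamma$.

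The only real work is the first step: carefully composing the definitions of $\vee$, $\proj_\alpha$, the infinite hallway, and the infinite line, and checking that the infinitesimal fattening collapses to the clean real inequality above. I expect that bookkeeping to be the main (though routine) obstacle; once it is in place the combinatorial punchline over $i\in\Z$ is immediate. One small point to keep in mind is the degenerate case $\alpha=0$, where $\proj_\alpha\ell_{\alpha\beta}^\epsilon$ is a single real number rather than an infinitesimal band, but the displayed inequality still holds there (with strict inequalities), so no separate argument is needed.
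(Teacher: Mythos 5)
Your proof is correct and rests on the same key estimate as the paper's: a line of sight $\ell_{\alpha\beta}^\epsilon$ forces $\alpha i+\beta-1\le d_i\le \alpha i+\beta$, so the slope is pinned down by the asymptotic drift of the doorway positions. You package this as a direct contradiction between the two inequality chains (letting $i\to\pm\infty$), whereas the paper phrases the same fact as $\alpha=\lim_{n\to\infty}(d_0-d_n)/n$ and invokes uniqueness of limits; the content is identical, and your unwinding of the infinitesimal bookkeeping is if anything more careful than the paper's.
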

	\begin{proof}
		Let $H^\infty$ be an infinite hallway with doorways $D_i=(d_i,d_i+1)$
		and suppose $\ell_{\alpha\beta}^\epsilon$ is a line of sight for $H^\infty_n$.
		Let $\bar D_i=[d_i,d_i+1]$ and notice that 
		$\ell_{\alpha\beta}$ must pass through $\bar D_0$ and $\bar D_n$.
		From this, we conclude
		\[
			\frac{d_0-(d_n+1)}{n}=\frac{d_0-d_n}{n}-\frac{1}{n}\leq \alpha \leq \frac{d_0-d_n}{n}+\frac{1}{n}=\frac{d_0+1 - d_n}{n},
		\]
		since a line with any slope outside of that range would not pass through
		$\bar D_0$ and $\bar D_n$.

		Now, if $\ell_{\alpha\beta}^\epsilon$ is a line of sight for $H^\infty$, it is a line of sight for 
		every $H^\infty_n$.  Thus for any $r>0$ and $n>1/r$, 
		\[
			\alpha-r\leq \frac{d_0-d_n}{n}\leq \alpha+r
		\]
		and so $\lim_{n\to\infty}(d_0-d_n)/n=\alpha$ exists.  Since $\lim_{n\to\infty}(d_0-d_n)/n$ exists and is unique,
		there can only be one slope for lines of sight for $H^\infty$.
	\end{proof}

	\begin{definition}[Periodic Hallways]
		An infinite hallway $H^\infty$ with doors $D_i=(d_i,d_i+1)$ is
		called \emph{periodic with period $m>0$} if there exists some $k$
		such that
		\[
			D_{i+m} = D_i+k=(d_i+k,d_i+k+1).
		\]
		If there exists an $m$ such that $H^\infty$ is periodic with period $m$, $H^\infty$ is called
		\emph{periodic}, otherwise $H^\infty$ is called \emph{aperiodic}.

		If $H^\infty$ is periodic, the \emph{minimum period} is the smallest $m$
		such that $H^\infty$ is periodic with period $m$.
	\end{definition}

	\begin{theorem}
		\label{PropRatiffRatSight}
		An infinite hallway $H^\infty$ which admits a line of sight
		is periodic if and only if it admits a rational line of sight.
	\end{theorem}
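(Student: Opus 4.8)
The plan is to reduce everything to a statement about the common slope of lines of sight and then argue each implication separately. First, by Theorem~\ref{PropInfiniteHallwaysUniqueSlope} every line of sight for $H^\infty$ has the same slope $\alpha$, and the proof of that theorem identifies $\alpha$ with $\lim_{n\to\infty}(d_0-d_n)/n$, where $D_i=(d_i,d_i+1)$. So I would first observe that, since $H^\infty$ admits a line of sight $\ell^\epsilon_{\alpha\beta}$, the assertion ``$H^\infty$ admits a rational line of sight'' is exactly the assertion $\alpha\in\Q$, and hence it suffices to prove that $H^\infty$ is periodic if and only if $\alpha\in\Q$.

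For the direction ``periodic $\Rightarrow$ rational,'' suppose $H^\infty$ is periodic with period $m$, so $d_{i+m}=d_i+k$ for all $i$ and some fixed $k$. Iterating gives $d_{jm}=d_0+jk$, hence $(d_0-d_{jm})/(jm)=-k/m$ for every $j\ge 1$. Since the limit defining $\alpha$ exists, it must coincide with this constant subsequence, so $\alpha=-k/m\in\Q$ and $\ell^\epsilon_{\alpha\beta}$ is already a rational line of sight.

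For the direction ``rational $\Rightarrow$ periodic,'' write $\alpha=p/q$ in lowest terms ($q\ge 1$) and fix a line of sight $\ell^\epsilon_{\alpha\beta}$. I would use Proposition~\ref{PropLittleNudge} to choose a sign $\sigma\in\{+1,-1\}$ with $(\ell_{\alpha\beta}+\sigma\epsilon)\cap H^\infty=\emptyset$; unwinding the definition of $H^\infty$, this says $\alpha i+\beta+\sigma\epsilon\in D_i=(d_i,d_i+1)$ for every $i\in\Z$. Now fix $i$ and apply this at index $i+q$: since $\alpha q=p\in\Z$, we have $\alpha(i+q)+\beta+\sigma\epsilon=(\alpha i+\beta+\sigma\epsilon)+p$, so the condition at $i+q$ reads $\alpha i+\beta+\sigma\epsilon\in D_{i+q}-p=(d_{i+q}-p,\,d_{i+q}-p+1)$. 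Thus $\alpha i+\beta+\sigma\epsilon$ lies in the two open unit intervals $(d_i,d_i+1)$ and $(d_{i+q}-p,\,d_{i+q}-p+1)$ of $\R^\epsilon$; since these have integer endpoints, they are disjoint unless $d_{i+q}-p=d_i$. Hence $d_{i+q}=d_i+p$ for all $i$, i.e.\ $D_{i+q}=D_i+p$, so $H^\infty$ is periodic with period $q$.

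The substance of the argument is this last direction, and the step I expect to need the most care is the passage through the infinitesimal. The hypothesis that $\ell^\epsilon_{\alpha\beta}$ is a line of sight only tells us directly that $\alpha i+\beta$ lies in the \emph{closed} interval $[d_i,d_i+1]$ for each $i$, and closures of consecutive doorway intervals can meet at a point, so this alone does not pin down $d_{i+q}$. Proposition~\ref{PropLittleNudge} is exactly what resolves this: it replaces $\ell^\epsilon_{\alpha\beta}$ by an honest real line $\ell_{\alpha\beta}+\sigma\epsilon$ missing $H^\infty$, whose intercept has a nonzero infinitesimal part and therefore lies in the \emph{open} intervals $D_i\subset\R^\epsilon$ — and two open unit intervals of $\R^\epsilon$ with distinct integer endpoints are disjoint even when those endpoints are consecutive integers. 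Once that is in hand, the rest is routine. (One could alternatively avoid Proposition~\ref{PropLittleNudge} by tracking the sign of the infinitesimal offset permitted at each index directly, but invoking it is cleaner.)
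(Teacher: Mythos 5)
Your proof is correct and follows essentially the same route as the paper: the forward direction extracts rationality of the slope from the limit $\lim_n (d_n-d_0)/n$ along the periodic subsequence, and the reverse direction uses Proposition~\ref{PropLittleNudge} to pin down each $d_i$ as the unique integer with $\alpha i+\beta+\sigma\epsilon\in(d_i,d_i+1)$ and then exploits $\alpha q=p\in\Z$. Your version is in fact slightly more careful than the paper's in insisting that the sign $\sigma$ of the infinitesimal nudge be chosen once and uniformly in $i$, which is the point that makes the uniqueness argument airtight.
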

	\begin{proof}
		Suppose $H^\infty$ is an aperiodic infinite hallway with doorways $D_i=(d_i,d_i+1)$
		and line of sight $\ell_{\alpha\beta}^\epsilon$.

		If $H^\infty$ is periodic, then there is some $m$ such that $d_{km} = d_0+k(d_m-d_0)$.
		Now, 
		\[
			\alpha=\lim_{n\to\infty}\frac{d_0-d_n}{n} = \lim_{k\to\infty}\frac{d_0-d_{km}}{km}
			=\lim_{k\to\infty}\frac{d_0-(d_0+k(d_{m}-d_0))}{km} = \frac{r}{m}
		\]
		where $r=d_m-d_0$ and so $\alpha\in \Q$.

		Conversely, suppose $\alpha=\tfrac{p}{q}\in\Q$. Now, $\beta+n\tfrac{p}{q}+\epsilon\in(d_n,d_n+1)$
		or $\beta+n\tfrac{p}{q}-\epsilon\in(d_n,d_n+1)$,
		and so in particular, $d_n$ is the unique integer such that
		\[
			\beta+n\tfrac{p}{q} -1 < d_n < \beta+n\tfrac{p}{q}.
		\]
		Letting $k=n+q$, we additionally have
		\[
			\beta+n\tfrac{p}{q} + p -1 < d_k=d_{n+q} < \beta+n\tfrac{p}{q} + p,
		\]
		and so $d_{n+q}$ is the unique integer such that $
		\beta+n\tfrac{p}{q} -1 < d_{n+q}-p < \beta+n\tfrac{p}{q}$, which shows $H^\infty$ is periodic
			with period $q$.
	\end{proof}
		
	The proof of Theorem \ref{PropRatiffRatSight} actually gives us an additional result.
	\begin{theorem}
		An infinite hallway $H^\infty$ admitting a line of sight has period $m$
		if and only if it admits a rational line of sight with slope $p/m$ for some $p\in\Z$.
	\end{theorem}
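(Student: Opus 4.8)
The plan is to read this statement off the proof of Theorem~\ref{PropRatiffRatSight}, which already contains all the substance; what remains is only to keep careful track of periods and denominators. First, recall that by Theorem~\ref{PropInfiniteHallwaysUniqueSlope} every line of sight for $H^\infty$ has the same slope $\alpha$, so ``$H^\infty$ admits a rational line of sight with slope $p/m$'' is simply the assertion that $\alpha = p/m$ for some $p\in\Z$. Also recall that ``period'' here means \emph{a} period, not necessarily the minimal one, matching the usage in Theorem~\ref{PropRatiffRatSight}.

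For the forward direction, suppose the infinite hallway $H^\infty$, with doorways $D_i = (d_i, d_i+1)$, has period $m$, say $D_{i+m} = D_i + k$ for all $i$; then $k = d_m - d_0$ and $d_{jm} = d_0 + jk$ for all $j$. By the proof of Theorem~\ref{PropInfiniteHallwaysUniqueSlope} the limit $\alpha = \lim_{n\to\infty}(d_0 - d_n)/n$ exists, so we may evaluate it along the subsequence $n = jm$:
\[
\alpha = \lim_{j\to\infty}\frac{d_0 - d_{jm}}{jm} = \lim_{j\to\infty}\frac{-jk}{jm} = \frac{d_0 - d_m}{m}.
\]
Hence $\alpha = p/m$ with $p = d_0 - d_m \in \Z$; since $H^\infty$ is assumed to admit a line of sight, that line of sight $\ell_{\alpha\beta}^\epsilon$ has slope $p/m$ and is the desired rational line of sight.

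For the converse, suppose $H^\infty$ admits a line of sight $\ell_{\alpha\beta}^\epsilon$ with $\alpha = p/m$. I would run the second half of the proof of Theorem~\ref{PropRatiffRatSight} with the denominator taken to be exactly $m$ and the numerator exactly $p$: using Proposition~\ref{PropLittleNudge} to fix one sign of the $\epsilon$-nudge valid for every column, each $d_n$ is determined consistently from $\beta + n\tfrac{p}{m}$, and applying the same characterization at index $n+m$ forces $d_{n+m} = d_n + p$; that is, $D_{i+m} = D_i + p$ for all $i$, so $H^\infty$ is periodic with period $m$.

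The one point that needs a word of care---bookkeeping, not substance---is that in this last step one must \emph{not} reduce the fraction: the conclusion ``period $q$'' in the proof of Theorem~\ref{PropRatiffRatSight} uses $q$ literally as the denominator appearing in the slope and never invokes coprimality of $p$ and $q$, so feeding in the possibly non-reduced representation $p/m$ legitimately yields period $m$ rather than the minimal period. Since every other ingredient is identical to the proof of Theorem~\ref{PropRatiffRatSight}, there is no real obstacle; the theorem is that proof, stated while tracking which denominator appears.
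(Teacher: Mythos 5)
Your proof is correct and is exactly the route the paper intends: the paper offers no separate argument, merely remarking that the result falls out of the proof of Theorem \ref{PropRatiffRatSight}, and your write-up supplies precisely that bookkeeping (evaluating the slope limit along the subsequence $n=jm$ for the forward direction, and observing that the converse argument uses the denominator literally without ever reducing $p/m$). Nothing is missing.
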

	
	\begin{corollary}
		Suppose $H^\infty$ is an infinite periodic hallway with minimal period $m$.
		If $\ell_{\tfrac{p}{m}\beta}^\epsilon$ is a line of sight 
		for $H^\infty$, then $p/m$ is in lowest terms.
	\end{corollary}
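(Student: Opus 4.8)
The plan is to argue by contradiction, leveraging the theorem immediately above, which states that an infinite hallway admitting a line of sight has period $m$ precisely when it admits a rational line of sight with slope $p/m$ for some $p\in\Z$.

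First I would suppose that $p/m$ is not in lowest terms. Then $g=\gcd(p,m)>1$, and writing $p=gp'$ and $m=gm'$ we get $m'<m$ together with $\tfrac{p}{m}=\tfrac{p'}{m'}$. Since $\tfrac{p}{m}$ and $\tfrac{p'}{m'}$ denote the very same real number, the hypothesis that $\ell_{\tfrac{p}{m}\beta}^\epsilon$ is a line of sight for $H^\infty$ says exactly that $H^\infty$ admits the rational line of sight $\ell_{\tfrac{p'}{m'}\beta}^\epsilon$ of slope $p'/m'$.

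Next I would invoke the preceding theorem in the direction ``admitting a rational line of sight of slope $p'/m'$ $\Rightarrow$ being periodic with period $m'$,'' concluding that $H^\infty$ is periodic with period $m'$. But $m'<m$ contradicts the assumption that $m$ is the minimal period of $H^\infty$. Hence $\gcd(p,m)=1$, i.e.\ $p/m$ is in lowest terms.

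I expect essentially no obstacle here: the only points requiring a moment's care are the degenerate case $p=0$ (where ``lowest terms'' forces $m=1$, which the cited equivalence indeed delivers, since a slope-$0$ line of sight is a rational line of sight of slope $0/1$, hence period $1$) and checking that the preceding theorem is quoted in exactly the form used above — both are immediate.
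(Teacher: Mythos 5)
Your proposal is correct and follows essentially the same route as the paper: the paper also argues (via the contrapositive) that if $p/m=p'/m'$ with $0<m'<m$, then $m'$ is a period for $H^\infty$ by the preceding theorem, contradicting minimality of $m$. Your explicit handling of the $p=0$ case is a harmless extra detail the paper omits.
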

	\begin{proof}
		We will prove the contrapositive.
		Suppose $H^\infty$ is an infinite hallway admitting a line of sight $\ell_{\tfrac{p}{m}\beta}^\epsilon$.
		If $p/m$ is not in lowest terms, then $p/m=p'/m'$ where $0<m'<m$.  Since $m'$ is a period for $H^\infty$,
		we know $m$ is not a minimal period.
	\end{proof}

	We now know quite a bit about the slopes of lines of sight for infinite hallways.
	Let us introduce a lemma dealing with the intercepts.

	\begin{lemma}
		\label{PropDinterval}
		Suppose $H^\infty$ is an infinite hallway with doorways $D_i$ and admitting
		a line of sight $\ell_{\alpha\beta}^\epsilon$.  Let $P_n=(p^n_i)_{i\in\Z}$ be an
		enumeration of the points in $\proj_\alpha(\{0,1,\ldots,n\}\times\Z)$ satisfying
		$p^n_i < p^n_{i+1}$.  Let $B_n = \{ [p^n_i,p^n_{i+1}]: p^n_i\in P^n\}$ and let 
		$D = \{\gamma:\ell_{\alpha\gamma}^\epsilon\text{ is a line of sight for }H^\infty\}$.
		Then $D$ is a (possibly degenerate) interval such that for all $n$, $D\subseteq B$
		for some $B\in B_n$.
	\end{lemma}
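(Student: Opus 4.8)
The plan is to prove the two assertions separately: first that $D$ is a (possibly degenerate) interval, and then that $D$ is contained in a single block of $B_n$ for every $n$. For the interval claim, I would leverage Proposition \ref{PropIntervalSight} and its underlying construction. Fix two values $\gamma_0 < \gamma_1$ in $D$, and take any $\gamma$ with $\gamma_0 < \gamma < \gamma_1$. For each $n$, the line $\ell_{\alpha\gamma}^\epsilon$ must pass through every closed doorway $\bar D_i = [d_i, d_i+1]$ for $0 \le i \le n$, because the projections $\proj_\alpha(\{i\}\times D_i)$ are open intervals and $\beta_0' := \proj_\alpha(\ell_{\alpha\gamma_0}+r\epsilon)$ and $\beta_1' := \proj_\alpha(\ell_{\alpha\gamma_1}+r'\epsilon)$ (for suitable $r,r'$) lie in all of them, so the intermediate real value $\gamma$ lies in the convex hull, hence in each open interval $\proj_\alpha(\{i\}\times D_i)$, or at worst on a boundary — in which case an infinitesimal nudge $\gamma \pm \epsilon$ still lands inside. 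Running this over all $n$ shows $\ell_{\alpha\gamma}^\epsilon \vee H^\infty \ne \emptyset$, so $\gamma \in D$. Thus $D$ is order-convex in $\R$, i.e.\ an interval.

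For the containment claim, fix $n$ and recall $Y_{\alpha,n} = (0,1)\setminus\bigcup_{i\le n}\proj_\alpha(\{i\}\times\Z)$, whose connected components form $\mathcal{Y}_{\alpha,n}$ (I will work modulo the obvious translation to reconcile the $(0,1)$-normalization with the unrestricted $B_n$). The key observation is that $\proj_\alpha(\{i\}\times\Z)$ is exactly the set $P_n$ of "bifurcation points," and by the same reasoning as in the proof of Proposition \ref{PropYPartitions}, two intercepts $\gamma, \gamma'$ of lines of sight for $H^\infty$ that are separated by some $p^n_i \in P_n$ would force different doorways $D_j$ for some $j \le n$ — contradicting the fact that $H^\infty$ is a single fixed hallway. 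More directly: if $\gamma \in D$ then $\ell_{\alpha\gamma}^\epsilon$ threads $\bar D_i$ for all $i \le n$, so its real intercept $\gamma$ satisfies $d_i \le \gamma - \alpha\cdot(\text{shift}) \le d_i + 1$ after accounting for the projection; hence $\gamma$ lies in $\bigcap_{i\le n}\proj_\alpha(\{i\}\times \bar D_i)$, which is a closed interval whose interior contains no point of $P_n$ and whose endpoints lie in $P_n$ (or outside the relevant range). Therefore $D \subseteq \bigcap_{i\le n}\proj_\alpha(\{i\}\times \bar D_i) \subseteq B$ for the unique $B \in B_n$ containing this intersection.

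I expect the main obstacle to be bookkeeping with the infinitesimals: a real intercept $\gamma \in D$ only guarantees that \emph{some} $\epsilon$-translate $\ell_{\alpha\gamma} \pm \epsilon$ misses $H^\infty$, so $\gamma$ itself may land precisely on a bifurcation point $p^n_i$. This is actually harmless for the containment statement, since $p^n_i$ is an \emph{endpoint} of the block $B \in B_n$ and the statement only requires $D \subseteq B$ (a closed block), not $D$ to avoid endpoints; but one must check that $D$ cannot straddle such a point, i.e.\ cannot contain intercepts on both sides of some $p^n_i$. That follows because intercepts on opposite sides of $p^n_i = \proj_\alpha(i, d_i+1)$ correspond to the doorway at position $i$ being either $(d_i, d_i+1)$ or $(d_i+1, d_i+2)$, and these are genuinely different hallways — but $H^\infty$ is fixed. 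The remaining routine steps are the explicit computation that $\proj_\alpha(\{i\}\times\{k\}) = k - \alpha i$ and that consecutive such values (over $k \in \Z$, $0 \le i \le n$) are what $P_n$ enumerates, together with the translation normalizing $D_0 = (0,1)$; I would dispatch these quickly and refer back to the discussion preceding Definition of $Y_{\alpha,n}$.
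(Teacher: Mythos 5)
Your proposal is correct and follows essentially the same route as the paper: the key step in both is that $\gamma\in D$ forces the real line $\ell_{\alpha\gamma}$ (via Proposition \ref{PropLittleNudge}) through every closed doorway, so $D\subseteq\bigcap_{i}\proj_\alpha(\{i\}\times\bar D_i)$, and since $D_i\cap\Z=\emptyset$ this intersection meets no point of $P_n$ in its interior and hence sits inside a single block of $B_n$. Your direct order-convexity check for the interval claim is a minor variant of the paper's observation that $D$ coincides with an intersection of intervals, and your bookkeeping about intercepts landing on bifurcation points is handled correctly.
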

	\begin{proof}
		Suppose $\ell_{\alpha\beta}^\epsilon$ is a line of sight for an infinite hallway $H^\infty$
		with doorways $D_i=(d_i,d_i+1)$.  Let $\bar D_i=[d_i,d_{i+1}]$.
		If $\ell_{\alpha\gamma}^\epsilon$ is a line of sight for $H^\infty$, then $\ell_{\alpha\gamma}^\epsilon$
		must intersect $\{i\}\times D_i$ for every $i$.  By Proposition \ref{PropLittleNudge},
		$\ell_{\alpha\gamma}+\epsilon$ or $\ell_{\alpha\gamma}-\epsilon$ must
		intersect $\{i\}\times D_i$ for each $i$ and so $\ell_{\alpha\gamma}$ intersects
		$\{i\}\times \bar D_i$ for each $i$.  Thus, we have the equality
		\[
			D = \bigcap_{i\geq 0} \proj_{\alpha}\left(\{i\}\times \bar D_i\right).
		\]
		Since $D$ can be written as an intersection of intervals, it is an interval.
		Lastly, since $D_i\cap \Z=\emptyset$ for all $i$, we know that for every $n$,
		$D\subset B$ for some  $B\in B_n$.
	\end{proof}

	From Lemma \ref{PropDinterval} we can get a bound on the size of $D$.

	\begin{theorem}
		Let $H^\infty$ be an infinite hallway.
		If $H^\infty$ is periodic with minimal period $m$ and $\ell_{\alpha\beta}^\epsilon$
		and $\ell_{\alpha\delta}^\epsilon$ are lines of sight for $H^\infty$, then
		$|\beta-\delta|\leq 1/m$.
	\end{theorem}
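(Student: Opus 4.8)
The plan is to exploit the explicit description of $D$ from Lemma~\ref{PropDinterval} together with the periodicity of $H^\infty$. By Lemma~\ref{PropDinterval}, if $\ell_{\alpha\beta}^\epsilon$ and $\ell_{\alpha\delta}^\epsilon$ are both lines of sight, then $\beta,\delta\in D$ where $D=\bigcap_{i\geq 0}\proj_\alpha(\{i\}\times \bar D_i)$ is an interval. It suffices to bound the length of this interval by $1/m$. First I would observe that since $H^\infty$ is periodic with minimal period $m$, Theorem~\ref{PropRatiffRatSight} (and its refinement) tells us $\alpha=p/m$ with $p/m$ in lowest terms, and the doorways satisfy $d_{i+m}=d_i+p$.

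Next I would pin down the interval $D$ more concretely. Each factor $\proj_\alpha(\{i\}\times \bar D_i) = [d_i - \alpha i,\ d_i+1-\alpha i]$ is a closed interval of length $1$, so $D = \big(\sup_i (d_i-\alpha i),\ \inf_i(d_i+1-\alpha i)\big)$ intersected appropriately — more precisely $D$ is the set of $\gamma$ with $\sup_i(d_i-\alpha i)\le \gamma\le \inf_i(d_i-\alpha i)+1$. Because $\alpha=p/m$ and $d_{i+m}=d_i+p$, the quantity $d_i-\alpha i = d_i - (p/m)i$ is periodic in $i$ with period $m$: indeed $d_{i+m}-\alpha(i+m) = d_i+p-\alpha i - p = d_i-\alpha i$. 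Hence the supremum and infimum over all $i\ge 0$ are actually a maximum and minimum over the $m$ residues $i\in\{0,1,\dots,m-1\}$, so $D = [\,M,\ \mu+1\,]$ where $M=\max_{0\le i<m}(d_i-\alpha i)$ and $\mu=\min_{0\le i<m}(d_i-\alpha i)$, and $|D| = \mu+1-M = 1-(M-\mu)$.

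Thus the theorem reduces to showing $M-\mu \ge 1-\tfrac1m$, equivalently that the $m$ real numbers $t_i := d_i - (p/m)i$, $i\in\{0,\dots,m-1\}$, have spread at least $1-\tfrac1m$. The key point is that $m t_i = m d_i - p i \in \Z$, and since $\gcd(p,m)=1$, as $i$ ranges over $\{0,\dots,m-1\}$ the values $-pi \bmod m$ hit every residue class mod $m$; combined with $m d_i\equiv 0\pmod m$ this forces the fractional parts $\{m t_i \bmod m\}/m$ — i.e., the values $t_i$ modulo $1$ — to realize all $m$ residues $0,\tfrac1m,\dots,\tfrac{m-1}{m}$ modulo $1$. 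But I also need the $t_i$ themselves (not just mod $1$) to be spread out. Here I would use that $\ell_{\alpha\beta}^\epsilon$ is a line of sight, so $D\neq\emptyset$, meaning all $t_i$ lie within a window of length $1$: $M - \mu < 1$, i.e. $\max t_i - \min t_i < 1$. Within a half-open window of length $1$, distinct values whose residues mod $1$ are all distinct and equal to $\{0,\tfrac1m,\dots,\tfrac{m-1}m\}$ must in fact \emph{be} exactly those residues (shifted by a common integer), so $\max t_i - \min t_i = \tfrac{m-1}{m} = 1-\tfrac1m$. Therefore $|D| = 1-(M-\mu) = \tfrac1m$, and $|\beta-\delta|\le |D| = 1/m$.

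The main obstacle I anticipate is the bookkeeping in the last step: making rigorous the claim that if $m$ reals lie in a window of length less than $1$ and are pairwise incongruent mod $1$ hitting all multiples of $1/m$, then their spread is exactly $(m-1)/m$. The cleanest route is probably to work with $m t_i \in \Z$ directly: these are $m$ integers, all in an interval of length less than $m$ (since $\max t_i-\min t_i<1$), and pairwise incongruent mod $m$ — hence they are $m$ \emph{consecutive} integers, giving spread exactly $m-1$, i.e. $\max t_i - \min t_i = (m-1)/m$ exactly. One should also double-check the minimal-period hypothesis is genuinely used (it guarantees $\gcd(p,m)=1$, without which the residues $-pi\bmod m$ would not be all distinct and the spread could be smaller, making $|D|$ larger than $1/m$).
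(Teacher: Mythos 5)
Your argument is correct, and it rests on the same foundation as the paper's proof --- Lemma~\ref{PropDinterval} plus the fact (from minimality of the period) that $\alpha=p/m$ is in lowest terms --- but it finishes differently. The paper uses only the covering statement of the lemma: for $n\geq m$ the partition $B_n$ is generated by the points $\proj_{p/m}(\{0,\ldots,n\}\times\Z)$, which form the lattice $\tfrac1m\Z$ precisely because $\gcd(p,m)=1$, so every cell of $B_n$ has width $1/m$ and $D$ sits inside one such cell. You instead work with the intersection formula $D=\bigcap_i\proj_\alpha(\{i\}\times\bar D_i)=[M,\mu+1]$ and compute its length exactly, showing via the pigeonhole argument on the integers $md_i-pi$ (pairwise incongruent mod $m$, hence $m$ consecutive integers once they fit in a window of length $<m$) that $M-\mu=(m-1)/m$. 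This costs a little more bookkeeping --- you need the periodicity $d_{i+m}=d_i+p$ to reduce the sup and inf to $i\in\{0,\ldots,m-1\}$, and you should note that $D\neq\emptyset$ only gives $M-\mu\leq 1$ a priori (the case $M-\mu=1$ is degenerate and trivially fine, or is ruled out for $m\geq 2$ by the same incongruence) --- but it buys the sharper conclusion that the interval of admissible intercepts has width exactly $1/m$, not merely at most $1/m$. Your closing remark correctly identifies where minimality of $m$ enters: without $\gcd(p,m)=1$ the residues $-pi\bmod m$ would not exhaust $\Z/m\Z$ and the bound would fail.
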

	\begin{proof}
		This follows quickly from Lemma \ref{PropDinterval}.  If $\ell^\epsilon_{\alpha\beta}$ is a
		line of sight for an infinite periodic hallway with minimal period $m$, then $\alpha=p/m$ for some $p$.
		This means, for all $i,j\geq m$, 
		\[
			\proj_\alpha(\{0,1,\ldots,i\}\times\Z) = \proj_\alpha(\{0,1,\ldots,j\}\times\Z),
		\]
		and so $B_i=B_j$ where $B_n$ is defined as in the statement of Lemma \ref{PropDinterval}.
		Since $\alpha=p/m$ must be in lowest terms, a quick calculation shows every interval in $B_i$ for $i\geq m$
		has width no greater than $1/m$, which completes the proof.
	\end{proof}

	\begin{theorem}
		\label{PropOneLineofSight}
		Let $H^\infty$ be an infinite hallway.
		If $H^\infty$ is aperiodic, then there is at most one line of sight for $H^\infty$.
	\end{theorem}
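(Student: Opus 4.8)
The plan is to combine the two structural theorems already established with the irrationality of the slope forced by aperiodicity. First, by Theorem \ref{PropInfiniteHallwaysUniqueSlope}, every line of sight for $H^\infty$ has one and the same slope $\alpha$, so it suffices to show that the set of admissible intercepts $D = \{\gamma : \ell^\epsilon_{\alpha\gamma} \text{ is a line of sight for } H^\infty\}$ contains at most one point. If $D=\emptyset$ there is nothing to prove, so I may assume $D\neq\emptyset$, and in particular that $H^\infty$ admits a line of sight, which lets me apply Theorem \ref{PropRatiffRatSight}: since $H^\infty$ is aperiodic it admits no rational line of sight, hence $\alpha\notin\Q$.

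Next I would apply Lemma \ref{PropDinterval}. It tells us that $D$ is an interval and that for every $n$ there is a block $B\in B_n$ with $D\subseteq B$, where the endpoints of the blocks in $B_n$ are consecutive points of $\proj_\alpha(\{0,1,\ldots,n\}\times\Z)=\{k-\alpha i : 0\le i\le n,\ k\in\Z\}$. Consequently the interior of $D$ contains no point of $\bigcup_{n}\proj_\alpha(\{0,\ldots,n\}\times\Z)=\{k-\alpha i : i\ge 0,\ k\in\Z\}$. The final ingredient is the classical fact that, because $\alpha$ is irrational, $\{-\alpha i \bmod 1 : i\ge 0\}$ is dense in $[0,1)$ (Weyl/Kronecker), and hence $\{k-\alpha i : i\ge 0,\ k\in\Z\}$ is dense in $\R$. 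An open interval of positive length must therefore meet this set, so the interior of $D$ is empty; that is, $D$ is degenerate — a single point. Together with the uniqueness of the slope, this shows $H^\infty$ has at most one line of sight.

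I do not expect a serious obstacle here: the only point requiring a little care is the bookkeeping in the previous paragraph, namely checking that ``$D$ lies in a single block of $B_n$ for every $n$'' genuinely forces the interior of $D$ to avoid the entire countable set $\{k-\alpha i : i\ge0,\ k\in\Z\}$ (rather than just finitely many of its points), after which the density of $\{i\alpha\}$ for irrational $\alpha$ finishes the argument and can simply be cited.
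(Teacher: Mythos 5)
Your proposal is correct and follows essentially the same route as the paper's proof: uniqueness of the slope from Theorem \ref{PropInfiniteHallwaysUniqueSlope}, irrationality of $\alpha$ from Theorem \ref{PropRatiffRatSight}, the nesting of the intercept interval $D$ inside blocks of $B_n$ from Lemma \ref{PropDinterval}, and density of $\proj_\alpha(\N\times\Z)$ to force $D$ to be degenerate. The only cosmetic difference is that you phrase the last step as ``the interior of $D$ avoids a dense set'' while the paper says the block diameters tend to zero; these are the same argument.
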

	\begin{proof}
		Suppose $H^\infty$ is an aperiodic infinite hallway with doorways $D_i=(d_i,d_i+1)$
		and line of sight $\ell_{\alpha\beta}^\epsilon$.  Necessarily we have $\alpha\notin \Q$, and by
		Theorem \ref{PropInfiniteHallwaysUniqueSlope} $\alpha$ is unique.

		Let $D=\{\gamma:\ell_{\alpha\gamma}^\epsilon\text{ is a line of sight for }H^{\infty}\}$.
		Since $\beta\in D$, $D$ is a non-empty (but possibly degenerate) interval.  Further, for every $n$
		we have that $D\subseteq B$ for some $B\in B_n$ where $B_n$ is as in the statement of Lemma \ref{PropDinterval}.
		But, since $\alpha\notin \Q$, $\proj_\alpha(\N\times\Z)$ is dense the diameter if
		every interval in $B_n$ tends towards zero as $n\to\infty$.  We conclude that $D=\{\beta\}$
		must be a singleton and so there is only one line of sight for $H^\infty$.
	\end{proof}
	
	The converse to Theorem \ref{PropOneLineofSight} is also true.  If there is a unique line of sight
	for an infinite hallway, it must be aperiodic.

\subsection{Metrics on Hallways}
	
	The theorems in the preceding section, taken together, show a correspondence between
	infinite hallways and a symbolic \emph{shift space}.  Consider
	the map
	$\Phi:\{\text{infinite hallways}\}\to\Z^\N$ where the $i$th coordinate of $\Phi(H)$ is $d_i-d_{i+1}$.
	$\Phi$ is an extension of the identically-named map between $n$-hallways and $n$-words from earlier.
	Under the assumption that the initial doorway of every infinite hallway is $D_0=(0,1)$, $\Phi$ is
	a bijection between hallways and sequences.  

	Let $T:\Z^\N\to\Z^\N$ be the \emph{shift} map.  That is $T(a_0,a_1,a_2,\ldots)=(a_1,a_2,\ldots)$
	deletes the first coordinate of a sequence.
	Let $\Omega$ be the image under $\Phi$ of all infinite hallways admitting a line of sight.  
	Now, $T(\Omega)\subset \Omega$ is immediate, and Proposition \ref{PropDesireableProperty} (the
	proposition that gives us our desirable property) shows that $T(\Omega)=\Omega$.  Thus, 
	$\Omega$ is $T$-invariant.  The word closed is almost always used in conjunction with the word
	invariant, so we might ask if $\Omega$ is also closed.

	The shift space $(\Z^\N, T)$ is typically endowed with the product topology on $\Z^\N$
	where $\Z$ has the discrete topology.  This is the same topology arising from
	the \emph{standard metric on sequences}, $d$.  Namely, if $x,y\in \Z^\N$, $d(x,y)=1/n$
	where $n$ is the index of the first coordinate where $x$ and $y$ differ.

	Using $\Phi$, the standard metric on sequences induces a metric on infinite hallways.

	\begin{definition}[Standard Metric on Infinite Hallways]
		Let $H,H'$ be infinite hallways.  The \emph{standard metric} on infinite
		hallways, notated $d_S$, is defined as
		\[
			d(\Phi(H),\Phi(H'))=d_S(H,H') = \frac{1}{n}\quad \text{ where }\quad
			n=\inf \{k\in\N: H_k \neq H_k'\},
		\]
		with the convention $1/\infty=0$ and $1/0=\infty$.
	\end{definition}

	Standard arguments now show that the set of all infinite hallways is complete with respect
	to $d_S$.

	Let $V:\{\text{infinite hallways}\}\to\{0,1\}$ be the \emph{visibility function}.  That is, $V(H)=1$ if $H$
	admits a line of sight and $0$ otherwise.  Now, suppose $H$ is an infinite hallway that admits
	a line of sight and let $H'$ be $H$ with a single doorway changed.  Since $V(H)=1$
	and $V(H')=0$, we cannot hope that $V$ is continuous.  However, we might hope that $V$ would be upper-semicontinuous.  
	That is, we might hope that if $H^{(n)}\to H$ is a convergent sequence of infinite hallways and $V(H^{(n)})=1$,
	then $V(H)=1$.
	Alas, this is not so with the standard metric.

	\begin{proposition}
		\label{PropDiscont}
		The visibility function $V$ is not upper-semicontinuous with respect to the standard metric on
		infinite hallways, $d_S$.
	\end{proposition}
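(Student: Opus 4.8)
To show $V$ is not upper-semicontinuous it suffices to exhibit a single convergent sequence of infinite hallways $H^{(k)}\to H$ with $V(H^{(k)})=1$ for all $k$ but $V(H)=0$. The strategy is to build each $H^{(k)}$ so that it agrees with a fixed target irrational hallway on longer and longer initial segments (guaranteeing $d_S$-convergence), but to make each $H^{(k)}$ eventually periodic in a way that \emph{destroys} the line of sight the irrational hallway would have. Concretely, I would fix an irrational $\alpha\in(0,1)$ and $\beta$ so that the hallway $H$ with doorways $D_i=(\floor{i\alpha+\beta},\floor{i\alpha+\beta}+1)$ is well-defined; by Theorem~\ref{PropOneLineofSight} this $H$ has at most one candidate line of sight, namely $\ell^\epsilon_{\alpha\beta}$. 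The key is to choose $\beta$ so that $H$ in fact admits \emph{no} line of sight — e.g. by arranging that the intersection $\bigcap_{i\geq 0}\proj_\alpha(\{i\}\times\bar D_i)$ from Lemma~\ref{PropDinterval} is empty. One clean way: pick the doorways so that for the only possible slope $\alpha$, some later doorway forces the intercept interval to pinch off. Alternatively, and more simply, define $H$ directly as a hallway whose truncations $H_n$ all admit a line of sight but whose slopes, forced by the first/last doorway bound in the proof of Theorem~\ref{PropInfiniteHallwaysUniqueSlope}, fail to converge — making $V(H)=0$ by that theorem.

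The concrete construction I would push on: let $H$ be the infinite hallway with $D_i=(\floor{i\pi},\floor{i\pi}+1)$ for $i\geq 0$ (shifting $\pi$ so that $\beta=0$ lies in the first doorway, or just translating so $D_0=(0,1)$). This is precisely the bad example flagged in the text before the introduction of infinitesimals: the only real line meeting every $\bar D_i$ is $\ell_{\pi 0}$, which passes through the lattice point $(0,0)$; one checks $\ell^\epsilon_{\pi 0}\vee H^\infty=\emptyset$ because no infinitesimal nudge keeps the line inside $D_0=(0,1)$ while also threading all later doorways, so $V(H)=0$. Now for each $k$, let $\alpha_k=p_k/q_k$ be a rational approximation of $\pi$ with $q_k>k$ obtained from the continued fraction expansion, and let $H^{(k)}$ be the periodic hallway with period $q_k$ whose doorways on $\{0,1,\ldots,q_k\}$ match those of $H$; by the convergence of continued-fraction convergents, $H^{(k)}$ agrees with $H$ on an initial segment of length $>k$, so $d_S(H^{(k)},H)\to 0$. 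Each $H^{(k)}$ is periodic, hence by Theorem~\ref{PropRatiffRatSight} it admits a rational line of sight provided the truncation $H^{(k)}_{q_k}$ admits any line of sight at all — which it does, since it agrees with $H_{q_k}$ and $H_{q_k}$ admits a line of sight of slope near $\pi$ by Proposition~\ref{PropIntervalSight}. So $V(H^{(k)})=1$ for all $k$, and we are done.

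The main obstacle is verifying the two delicate endpoints: first, that $V(H)=0$ for the chosen irrational hallway — this requires carefully unwinding the visibility operator $\vee$ and checking that the infinitesimal fattening genuinely cannot rescue the line through the lattice point, which is exactly the computation sketched in the paragraph preceding the definition of infinitesimals; second, that each periodic truncation really does inherit a line of sight. For the latter I would note that $H^{(k)}_{q_k}=H_{q_k}$ as finite hallways, and $H_{q_k}$ admits a line of sight because it is a truncation of the "no light blocked" configuration $\ell_{\pi 0}$ restricted to $[0,q_k]$ (the line $\ell_{\pi 0}$ avoids $H_{q_k}$ since it only touches the lattice at $x=0$, and we can nudge it off the lattice point within $D_0$ over the finite range $[0,q_k]$ by Proposition~\ref{PropIntervalSight}). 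Then Theorem~\ref{PropRatiffRatSight} applied to the periodic extension $H^{(k)}$ upgrades this to a genuine infinite line of sight. I would package the finite-hallway nudging argument once and cite Proposition~\ref{PropIntervalSight} rather than recompute intercept intervals. The construction is otherwise routine; the only place a reader could object is the claim $V(H)=0$, so that is where I would spend the most care, cross-referencing the worked example in the text.
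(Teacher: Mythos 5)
There is a genuine gap: the limit hallway you construct does not satisfy $V(H)=0$. For $H$ with doorways $D_i=(\floor{i\pi},\floor{i\pi}+1)$, $i\geq 0$, the \emph{infinite} line $\ell^\epsilon_{\pi 0}$ is a line of sight: the nudged real line $\ell_{\pi 0}+\epsilon$ has $\proj_\pi(\ell_{\pi 0}+\epsilon)=\{\epsilon\}$, and $\epsilon\in\proj_\pi(\{i\}\times D_i)=(\floor{i\pi}-i\pi,\ \floor{i\pi}-i\pi+1)$ for every $i$ --- for $i=0$ because $0<\epsilon<1$, and for $i\geq 1$ because the left endpoint is a negative real and the right endpoint is a positive real, hence exceeds $\epsilon$. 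So $\ell_{\pi 0}+\epsilon$ misses $H$ entirely, $\ell^\epsilon_{\pi 0}\vee H\neq\emptyset$, and $V(H)=1$. This is not a fixable detail: the infinitesimal fattening was introduced precisely to rescue such hallways (Proposition \ref{PropDesireableProperty}), and Theorem \ref{ThmHallwaysEquivSturms} shows that \emph{every} hallway of the form $D_i=(\floor{i\alpha+\beta},\floor{i\alpha+\beta}+1)$ admits an infinite line of sight, so no choice of $\beta$ or of irrational $\alpha$ makes your construction work. The $\pi$-example in the text that you cite concerns the failure of \emph{real} lines of sight when one tries to prepend a doorway; under the visibility operator $\vee$ that actually defines $V$, that hallway is visible.

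To break upper-semicontinuity you need a $d_S$-limit that admits no infinite line of sight, and aperiodic Sturmian-type limits never supply one; the paper instead uses an eventually periodic but non-periodic limit. Its example: $H^{(n)}$ is periodic with line of sight of slope $1/n$, with doorways $(0,1)$, $(1,2)$, then $n$ repeats of $(1,2)$ before the next unit jump. These converge in $d_S$ to $H^\infty$ with $D_0=(0,1)$ and $D_i=(1,2)$ for all $i\geq 1$. By the slope bound in Theorem \ref{PropInfiniteHallwaysUniqueSlope} any line of sight for $H^\infty$ must have slope $0$, which forces intercept $\beta=1$; then $\ell_{0,1}+\epsilon$ fails at $D_0$ and $\ell_{0,1}-\epsilon$ fails at $D_1$, so $V(H^\infty)=0$. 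If you replace your limit with a hallway of this shape (an initial ``defect'' followed by a periodic tail whose forced slope cannot absorb the defect), the remainder of your outline --- periodic approximants agreeing with the limit on longer and longer initial segments --- goes through essentially as in the paper.
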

	\begin{proof}
		Let $H^{(n)}$ be the periodic infinite hallway admitting a line of sight with slope
		$1/n$ and with doorways $D_0^{(n)}=(0,1)$, $D_1^{(n)}=(1,2)$, and $D_i^{(n)}=(\floor{\tfrac{i-1}{n}}+1, \floor{\tfrac{i-1}{n}}+2)$.
		Geometrically, $H^{(n)}$ has a jump of size 1 between $D_0$ and $D_1$, then has $n$
		identical doorways in a row before another jump of size 1.

		Now, for all $m,n$, we have
		\[
			(0,1)=D_0^{(m)}=D_0^{(n)}\qquad\text{and}\qquad(1,2)= D_1^{(m)}=D_1^{(n)}
		\]
		and for every $n,m>k>1$ and $1\leq i\leq k$,
		\[
			(1,2) = D_1^{(n)} = D_i^{(m)}.
		\]
		 From this description, we see $H^{(n)}\to H^\infty$
		which has doorways $D_0^\infty =(0,1)$ and $D_i^\infty=(1,2)$ for all $i\geq 1$.  But $V(H^{(n)})=1$
		and $V(H^\infty)=0$, so $V$ is not semi-continuous.
	\end{proof}

	Since $\Omega=\Phi^{-1}\circ V^{-1}(1)$, Proposition \ref{PropDiscont} shows that $\Omega$ is
	not closed with respect to the standard metric.  Similarly, the set of all Sturmian sequences
	is not closed under the standard metric (because, as in Proposition \ref{PropDiscont},
	limits of periodic points may be aperiodic
	but eventually periodic) and the property of being a rotation sequence is not closed under limits.

	All hope is not lost, though.  There may be a different metric that $V$ is upper-semicontinuous with
	respect to.  The counterexample used in Proposition \ref{PropDiscont} relied on a sequence
	of periodic infinite hallways.  In particular, the lines of sight had slope converging to a
	rational number, so we might seek to prevent hallways from
	doing this. 

	\begin{definition}[Common Initial Segment]
		Given two infinite hallways $H$ and $H'$, their \emph{common
		initial segment} is 
		\[
			\comm(H,H') = H_{1/d_S(H,H')} = H_{1/d_S(H,H')}'.
		\]
	\end{definition}
	If $H=H'$, we consider $\comm(H,H')=H=H'$.  Otherwise, $\comm(H,H')$ is always a finite hallway or empty.

	\begin{definition}[Unframed Hallway]
		Given a hallway $H$ with $i$th doorway $(d_i,d_i+1)$, the corresponding
		\emph{unframed hallway} is the hallway $\bar H$ whose $i$th doorway
		is the closed interval $[d_i,d_i+1]$.
	\end{definition}

	\begin{definition}[Rational Metric on Infinite Hallways]
		Let $H,H'$ be infinite hallways.  The \emph{rational metric} on infinite hallways,
		notated $d_R$, is defined as follows.

		If $H=H'$, then $d_R(H,H')=0$; if $\comm(H,H')$ admits a line of sight,
		\[
			d_R(H,H') = \max\{\tfrac{1}{q}: \ell_{\frac{p}{q}\beta}\text{
			is a line of sight for } \comm(\bar H,\bar H')\text{ for some }\beta\in\R, p\in \Z\};
		\]
		and if $\comm(H,H')$ admits no line of sight and $H\neq H'$, then $d_R(H,H')=\infty$.
	\end{definition}

	\begin{proposition}
		The rational metric, $d_R$, is a metric on infinite hallways.
	\end{proposition}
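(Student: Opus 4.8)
The plan is to verify the three defining conditions of a metric in turn; only the triangle inequality requires real work. It is convenient to write, for a finite hallway $G$, $v(G)$ for the number the definition attaches to a pair whose common initial segment is $G$: thus $v(G)=\max\{1/q:\ell_{\frac{p}{q}\beta}\text{ is a line of sight for }\bar G\text{ for some }p\in\Z,\ \beta\in\R\}$ when $G$ admits a line of sight, and $v(G)=\infty$ otherwise, so that $d_R(H,H')=v(\comm(H,H'))$ whenever $H\ne H'$.

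For positivity and the identity of indiscernibles, note that each of the three clauses returns a value in $\{0\}\cup(0,1]\cup\{\infty\}$; the only thing to check is that the maximum in the middle clause is actually attained. Since a line of sight for $\comm(H,H')$ is also a line of sight for the unframed hallway $\comm(\bar H,\bar H')$ (enlarging doorways to closed intervals only shrinks the set that must be avoided), Corollary \ref{PropRationalSight} shows that the set of positive integers $q$ admitting some line of sight $\ell_{\frac{p}{q}\beta}$ for $\comm(\bar H,\bar H')$ is nonempty, and a nonempty set of positive integers has a least element; hence $v(\comm(H,H'))$ equals $1/q$ for the minimal such $q$ and lies in $(0,1]$. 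In particular $d_R(H,H')>0$ for $H\ne H'$, while $d_R(H,H)=0$ by fiat. Symmetry is immediate: $\comm(H,H')=\comm(H',H)$, unframing commutes with taking common initial segments, and each clause is symmetric in its two arguments.

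The substance is the triangle inequality, and I would obtain it from the stronger ultrametric inequality
\[
	d_R(H,H'')\ \le\ \max\{d_R(H,H'),\,d_R(H',H'')\},
\]
which suffices because $\max\{a,b\}\le a+b$ for $a,b\ge 0$. The geometric ingredient is a monotonicity lemma: if $G'$ is an initial segment of a finite hallway $G$ and $G$ admits a line of sight, then $G'$ admits a line of sight and $v(G')\ge v(G)$. Indeed $\bar G'\subseteq\bar G$ as subsets of the plane, so every rational line of sight of $\bar G$ is one of $\bar G'$; thus the set of admissible denominators for $G'$ contains that for $G$, its minimum is no larger, and the reciprocal is no smaller. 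Granting this, take $H,H',H''$ pairwise distinct (the cases in which two coincide are trivial), and use symmetry to assume $|\comm(H,H')|\le|\comm(H',H'')|$. If this inequality is strict, then at the first doorway where $H$ and $H'$ disagree the hallways $H'$ and $H''$ still agree, so $H$ and $H''$ disagree there as well; hence $\comm(H,H'')$ and $\comm(H,H')$ are the same finite hallway and $d_R(H,H'')=d_R(H,H')$. If the two common segments have equal length, then $\comm(H,H')$ and $\comm(H',H'')$ are identical (each is the same initial block of $H'$), and that hallway is an initial segment of $\comm(H,H'')$; the monotonicity lemma then gives $d_R(H,H'')=v(\comm(H,H''))\le v(\comm(H,H'))=d_R(H,H')$. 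In every case the ultrametric inequality holds.

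The step I expect to need the most care is the equal-length case, where the monotonicity lemma applies only once $\comm(H,H'')$ is known to admit a line of sight. This is automatic in the intended setting, since removing doorways from a hallway cannot destroy a line of sight, so a hallway admitting a line of sight has every finite restriction — in particular $\comm(H,H'')$ — admitting a line of sight; thus on hallways that admit lines of sight the argument is complete. (It is worth recording that without this proviso $\comm(H,H'')$ can fail to admit a line of sight even when the two shorter common segments do not, which is exactly why the domain must be kept in mind.) The remaining bookkeeping — that $\comm(H,H')$ is a bona fide finite hallway for $H\ne H'$, nonempty because of the standing normalization $D_0=(0,1)$, and that the unframed segment in the formula is just $\overline{\comm(H,H')}$ — is routine.
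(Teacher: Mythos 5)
Your route is the same as the paper's: both arguments reduce the triangle inequality to the observation that passing to an initial segment of a finite hallway enlarges the set of lines of sight (hence cannot decrease the $\max\{1/q\}$), combined with the trichotomy on the lengths of the three common initial segments. You phrase the conclusion as an ultrametric inequality while the paper phrases it as ``either $d_R(H,H'')\geq d_R(H,H')$ or $d_R(H',H'')\geq d_R(H,H')$,'' but these are the same argument; your treatment of positivity (attainment of the maximum via Corollary \ref{PropRationalSight} and well-ordering of the denominators) and of symmetry likewise matches the paper.

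The caveat in your last paragraph is not mere caution --- it points at a genuine gap, and the gap is present in the paper's own proof as well. The monotonicity step ``$X\subseteq Y$ implies the $d_R$-value computed from $X$ is at least the value computed from $Y$'' fails when $Y$ admits no line of sight: the third clause then assigns $Y$ the value $\infty$, while the shorter hallway $X$ may still admit a rational line of sight and receive a finite value. Concretely, let $H$ have doorways $(0,1),(0,1),(5,6),\dots$, let $H''$ agree with $H$ on these three doorways and differ at the fourth, and let $H'$ have doorways $(0,1),(1,2),\dots$. Then $\comm(H,H')$ and $\comm(H',H'')$ each consist of the single doorway $(0,1)$, so $d_R(H,H')=d_R(H',H'')=1$, whereas $\comm(H,H'')$ contains $(0,1),(0,1),(5,6)$, which admits no line of sight (a line meeting the first two doorways has slope in $(-1,1)$, but reaching the third requires slope in $(4,6)$), so $d_R(H,H'')=\infty$ and the triangle inequality fails. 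Your restriction to hallways admitting lines of sight --- where every common initial segment automatically admits a line of sight, all distances are finite, and your monotonicity lemma applies without reservation --- is exactly the right repair: the proposition holds on $\mathcal V=\{H:V(H)=1\}$, which is where the paper subsequently uses $d_R$ in earnest, but not on the set of all infinite hallways as literally stated. So your proof is correct on the domain you claim, and your parenthetical correctly diagnoses a defect that the paper's proof passes over.
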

	\begin{proof}
		By definition $d_R(H,H')=0$ if $H=H'$.  Suppose $H\neq H'$ .  If
		$\comm(H,H')$ admits no line of sight, $d_R(H,H')=\infty>0$.
		If $\comm(H,H')$ admits a line
		of sight, because it is a finite hallway, it admits a rational line of
		sight and so $d_R(H,H')>0$.  Further, since $\comm(H,H')=
		\comm(H',H)$, $d_R(H,H')=d_R(H',H)$ and so conditions (i) and (ii) are
		satisfied.

		Now we consider condition (iii).  Let $H,H',H''$ be infinite hallways and notice
		that either
		\[
			\comm(H,H'')\subseteq \comm(H,H')\qquad\text{or}\qquad\comm(H',H'')\subseteq \comm(H,H').
		\]
		To see this, let $n_{XY}=1/d_S(X,Y)$ be the number of doorways that hallways $X$
		and $Y$ agree for, and consider the three choices: (a) $n_{HH'}=n_{HH''}$, (b) 
		$n_{HH'} > n_{HH''}$, or (c) $n_{HH'}<n_{HH''}$.

		In case (a), $\comm(H,H'')= \comm(H,H')$ and so $\comm(H,H'')\subseteq \comm(H,H')$;
		in case (b), we must have $n_{H'H''} = n_{HH''} < n_{HH'}$ and so $\comm(H',H'')\subseteq \comm(H,H')$; and,
		in case (c), we must have $n_{H'H''} = n_{HH'}$ which means $\comm(H',H'')= \comm(H,H')$ and so
		$\comm(H',H'')\subseteq \comm(H,H')$.
		
		Now, for finite hallways $X,Y$ where $X\subset Y$, the set of lines of sight for $X$ is a superset of
		the set of lines of sight for $Y$.  Thus, the above set inclusions give us either
		\[
			d_R(H,H'') \geq d_R(H,H')\qquad\text{or}\qquad d_R(H',H'') \geq d_R(H,H'),
		\]
		and so certainly $d_R(H,H'')+d_R(H',H'')\geq d_R(H,H')$.
	\end{proof}

	\begin{notation}
		Let $\mathcal H$ be the set of all infinite hallways; 
		let $\mathcal Q$ be the set of all infinite periodic hallways that admit lines
		of sight; let $\bar {\mathcal Q}\subset \mathcal H$ be the closure of $\mathcal Q$ under the 
		metric $d_R$; and let $\mathcal Q^c=\bar{\mathcal Q}\backslash \mathcal Q$.
	\end{notation}

	It is not immediately obvious that $\mathcal Q^c$ contains anything at all, but we will show
	that $\mathcal Q^c$ is precisely the set of all aperiodic hallways admitting lines of sight.
	Not only that, but we will show that $\mathcal Q^c$ is a closed set with respect to $d_R$, from
	which it will follow that $V$, the visibility function, is upper-semicontinuous.

	\begin{proposition}\label{PropQcClosed}
		The set $\mathcal Q^c$ is closed with respect to $d_R$.
	\end{proposition}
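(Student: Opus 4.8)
The plan is to deduce that $\mathcal{Q}^c$ is closed from the stronger fact that \emph{every} hallway in $\mathcal{Q}$ is an isolated point of $(\mathcal{H},d_R)$. First I would record the purely topological reduction. Since $\mathcal{Q}^c \subseteq \bar{\mathcal{Q}}$ and $\bar{\mathcal{Q}}$ is $d_R$-closed by definition, the closure $\overline{\mathcal{Q}^c}$ is contained in $\bar{\mathcal{Q}} = \mathcal{Q}\sqcup\mathcal{Q}^c$. Hence $\mathcal{Q}^c$ will be closed as soon as we know $\overline{\mathcal{Q}^c}\cap\mathcal{Q}=\emptyset$, i.e.\ that no element of $\mathcal{Q}$ is a $d_R$-limit of a sequence in $\mathcal{Q}^c$; and this is automatic once each $H\in\mathcal{Q}$ is isolated, since a sequence in $\mathcal{Q}^c$ converging to such an $H$ would have to be eventually equal to $H$, contradicting $H\notin\mathcal{Q}^c$.

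The substance is therefore the isolation claim: if $H\in\mathcal{Q}$ has minimal period $m$, then $d_R(H,H')\geq 1/m$ for every infinite hallway $H'\neq H$. By Theorem \ref{PropRatiffRatSight}, together with the refinement immediately following it relating the period to the denominator, $H$ admits a rational line of sight $\ell^\epsilon_{\frac{p}{m}\beta}$ with $\beta\in\R$. By the identity $D=\bigcap_{i\geq 0}\proj_{\frac{p}{m}}(\{i\}\times[d_i,d_i+1])$ established in the proof of Lemma \ref{PropDinterval}, the membership $\beta\in D$ says precisely that the genuine real line $\ell_{\frac{p}{m}\beta}$ passes through $\{i\}\times[d_i,d_i+1]$ for every $i\geq 0$; in particular $\ell_{\frac{p}{m}\beta}$ is a line of sight for every finite unframed initial segment $\bar H_k$ of $H$. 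Now take any $H'\neq H$. Because every hallway is normalized with $D_0=(0,1)$, the hallways $H$ and $H'$ agree at least through their zeroth doorways, so $\comm(H,H')$ is a nonempty finite initial segment of $H$ and $\comm(\bar H,\bar H')$ is the unframed version of that segment, i.e.\ some $\bar H_k$. Thus $\comm(\bar H,\bar H')$ admits the line of sight $\ell_{\frac{p}{m}\beta}$ of slope $p/m$, which, $m$ being the minimal period, is already in lowest terms; so $1/m$ occurs in the maximum defining $d_R(H,H')$ and $d_R(H,H')\geq 1/m$.

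Assembling: the bound gives $B_{d_R}(H,1/m)=\{H\}$ for each $H\in\mathcal{Q}$, so every point of $\mathcal{Q}$ is isolated in $\mathcal{H}$, hence not a limit point of $\mathcal{Q}^c$; by the first paragraph this yields $\overline{\mathcal{Q}^c}\subseteq\mathcal{Q}^c$, i.e.\ $\mathcal{Q}^c$ is closed. I expect the only real obstacle to be the middle step — cleanly transferring the infinitesimally fattened line of sight of the full hallway $H$ to an honest real line of sight of the \emph{same rational slope} for each finite unframed truncation, and observing that $\comm(\bar H,\bar H')$ is always such a truncation. The one point meriting care is that the relevant finite intersection of closed doorway projections is nonempty (it contains $\beta$), so that the slope-$\tfrac{p}{m}$ real line genuinely threads every doorway of the truncation; everything else is bookkeeping with the definitions of $\comm$ and $d_R$.
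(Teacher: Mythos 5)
Your proof is correct and follows essentially the same route as the paper: the paper likewise reduces to showing $\mathcal Q$ is open by proving each $H\in\mathcal Q$ is isolated, via the bound $d_R(H,H')\geq 1/q$ coming from the rational line of sight of the periodic hallway. Your version is in fact slightly more careful than the paper's, since you explicitly transfer the infinitesimal line of sight to a real line of sight for the unframed truncations $\bar H_k$, which is what the definition of $d_R$ actually requires.
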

	\begin{proof}
		By definition $\bar {\mathcal Q}$ is closed with respect to $d_R$.  Thus, if we
		can show $\mathcal Q$ is open with respect to $d_R$, the proof will be complete.

		To that end, suppose $H\in \mathcal Q$ admits a line of sight $\ell_{\frac{p}{q}\beta}^\epsilon$.
		We necessarily have that $\ell_{\frac{p}{q}\beta}^\epsilon$ is a line of sight
		for $H_k$ for all $k$.  Thus, $d_R(H,H')\geq 1/q$ for all $H'\neq H$, and so the
		singleton $\{H\}$ is open with respect to $d_R$ (it is the only
		element in the $d_R$-ball of radius $1/(2q)$ with center $H$).  We conclude that 
		$\mathcal Q$ is the union of open sets and is therefore open.
	\end{proof}

	\begin{lemma}\label{PropSightForUnframedStillSame}
		If $H$ is an infinite hallway that admits a unique line of sight $\ell_{\alpha\beta}^\epsilon$, then
		the unframed infinite hallway $\bar H$ admits $\ell_{\alpha\beta}^\epsilon$ as its unique line
		of sight.
	\end{lemma}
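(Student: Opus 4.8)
The plan is to exploit the fact that unframing only enlarges each doorway from an open interval to its closure, so the obstacle shrinks and visibility can only improve; the point is to check that it does not improve at all. Regard $H$ and $\bar H$ as subsets of $\R\times\R^\epsilon$. Since $D_i\subseteq\bar D_i$ for every $i$ we have $\bar H\subseteq H$, hence $\proj_\alpha\bar H\subseteq\proj_\alpha H$ and $(\proj_\alpha H)^c\subseteq(\proj_\alpha\bar H)^c$; consequently $\ell_{\alpha\beta}^\epsilon\vee H\subseteq\ell_{\alpha\beta}^\epsilon\vee\bar H$, and since the left-hand side is non-empty by hypothesis, $\ell_{\alpha\beta}^\epsilon$ is a line of sight for $\bar H$. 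So existence is immediate, and the work is uniqueness.

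First I would pin down the slope. The proof of Theorem \ref{PropInfiniteHallwaysUniqueSlope} uses only that a line of sight for the first $n$ hallways forces the ordinary line through the \emph{closures} of the first $n+1$ doorways, and then sends $n\to\infty$ to recover the slope as $\lim_{n\to\infty}(d_0-d_n)/n$. Because the doorways of $\bar H$ are already closed and $\bar H$ has the same doorway positions $d_i$ as $H$, that argument runs verbatim for $\bar H$; hence every line of sight for $\bar H$ has slope $\lim_{n\to\infty}(d_0-d_n)/n$, which by Theorem \ref{PropInfiniteHallwaysUniqueSlope} applied to $H$ equals $\alpha$. So every line of sight for $\bar H$ has the form $\ell_{\alpha\gamma}^\epsilon$ for some $\gamma\in\R$.

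Now the intercept. Set $D=\{\gamma:\ell_{\alpha\gamma}^\epsilon\text{ is a line of sight for }H\}$ and $\bar D=\{\gamma:\ell_{\alpha\gamma}^\epsilon\text{ is a line of sight for }\bar H\}$; by the uniqueness hypothesis together with the slope paragraph, $D=\{\beta\}$. Just as in the proof of Lemma \ref{PropDinterval}, a direct computation with the visibility operator gives
\[
\bar D=\bigcap_{i\geq0}\proj_\alpha\bigl(\{i\}\times\bar D_i\bigr)=\bigcap_{i\geq0}\bigl[\,d_i-\alpha i,\ d_i+1-\alpha i\,\bigr],
\]
an intersection of closed unit intervals, hence a closed interval, and non-empty since $\beta\in\bar D$. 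The crux is that the interior of $\bar D$ is contained in $D$: if $\gamma$ is interior to $\bar D$, then $d_i-\alpha i<\gamma<d_i+1-\alpha i$ strictly for every $i$, so the ordinary line $\ell_{\alpha\gamma}$ misses $H$, which forces $\gamma\in\ell_{\alpha\gamma}^\epsilon\vee H$ and hence $\gamma\in D$. Since $D=\{\beta\}$ is a single point, $\bar D$ has empty interior; being an interval it is itself a single point, and as it contains $\beta$ we conclude $\bar D=\{\beta\}$. Together with the slope paragraph, this shows $\ell_{\alpha\beta}^\epsilon$ is the unique line of sight for $\bar H$.

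The only step that requires real thought is this last one: closing up the doorways can add, at worst, the two endpoint intercepts of the interval $\bar D$, which is never enough to create a line of sight with a new intercept once $D$ has collapsed to a single point. Everything else is routine bookkeeping with the definitions of $\proj_\alpha$, $\vee$, and arithmetic in $\R^\epsilon$.
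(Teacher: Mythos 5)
Your proof is correct and follows essentially the same route as the paper's: the slope is pinned down by observing that Theorem \ref{PropInfiniteHallwaysUniqueSlope} applies verbatim to unframed hallways, and the intercept is pinned down by identifying the relevant intercept sets with $\bigcap_{i}\proj_\alpha(\{i\}\times[d_i,d_i+1])$. If anything, your treatment of the endpoints --- proving that the interior of $\bar D$ is contained in $D\subseteq\bar D$ and then using that $D=\{\beta\}$ forces $\bar D=\{\beta\}$, rather than asserting outright that the two intercept sets coincide --- is more careful than the paper's one-line version of the same argument.
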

	\begin{proof}
		Let $H$ be an infinite hallway with unique line of sight $\ell_{\alpha\beta}^\epsilon$
		and $\bar H$ the corresponding unframed infinite hallway.
		Notice that the proof of Theorem \ref{PropInfiniteHallwaysUniqueSlope} applies equally
		well to unframed infinite hallways, and so the slope of any line of sight for $\bar H$
		must be $\alpha$.

		Let $(d_i,d_i+1)$ be the $i$th doorway for $H$.  Similarly to the proof
		of Lemma \ref{PropDinterval}, 
		\[
			\{\beta\}=B=\bigcap_{n\in \N}(\proj_\alpha(\{n\}\times [d_i,d_i+1]),
		\]
		is the complete set of intercepts for the infinite hallway $H$ and is also
		the complete set of intercepts for the unframed infinite hallway $\bar H$.  Thus $\ell_{\alpha\beta}^\epsilon$
		is unique.
	\end{proof}

	\begin{proposition}\label{PropQcContainsAperiodics}
		Suppose $H$ is an aperiodic infinite hallway admitting a line of sight.  Then $H\in\mathcal Q^c$.
	\end{proposition}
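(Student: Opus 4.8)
The plan is to prove $H\in\bar{\mathcal Q}$; since $H$ is aperiodic it is certainly not in $\mathcal Q$, so this already yields $H\in\mathcal Q^c=\bar{\mathcal Q}\setminus\mathcal Q$. By Theorem~\ref{PropOneLineofSight} the hallway $H$ has a unique line of sight $\ell_{\alpha\beta}^\epsilon$, and by Theorem~\ref{PropRatiffRatSight} aperiodicity forces $\alpha\notin\Q$. I will exhibit a sequence $P^{(n)}\in\mathcal Q$ with $d_R(H,P^{(n)})\to 0$.

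Fix $n$. The finite hallway $H_n$ inherits the line of sight $\ell_{\alpha\beta}$ from $H$, so by Corollary~\ref{PropRationalSight} it admits a rational line of sight; moreover, the set of valid $y$-intercepts for a fixed rational slope is an open interval (as in the proof of Proposition~\ref{PropIntervalSight}), so I may pick a rational line $\ell_{p_n/q_n,\gamma_n}$ that is a line of sight for $H_n$ and satisfies $\gamma_n+kp_n/q_n\notin\Z$ for every $k\in\Z$ (this excludes only countably many $\gamma_n$). Following the converse half of the proof of Theorem~\ref{PropRatiffRatSight}, let $P^{(n)}$ be the infinite hallway whose $k$th doorway is $(\floor{\gamma_n+kp_n/q_n},\floor{\gamma_n+kp_n/q_n}+1)$. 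The computation there shows that $P^{(n)}$ is periodic with period $q_n$, has initial doorway $(0,1)$, and admits $\ell_{p_n/q_n,\gamma_n}^\epsilon$ as a line of sight, so $P^{(n)}\in\mathcal Q$; and since $\gamma_n+kp_n/q_n$ lies in the open doorway $D_k$ of $H$ for $0\le k\le n$, the hallway $P^{(n)}$ agrees with $H$ on $D_0,\dots,D_n$.

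Now estimate $d_R(H,P^{(n)})$. Because $H$ and $P^{(n)}$ agree on at least the first $n$ doorways, $\comm(\bar H,\bar P^{(n)})$ is an unframed finite hallway having $\overline{H_n}$ as an initial segment; it admits a line of sight (a restriction of the one for $P^{(n)}$), so the middle clause of the definition of $d_R$ applies, and because every line of sight for $\comm(\bar H,\bar P^{(n)})$ is also one for $\overline{H_n}$, we get $d_R(H,P^{(n)})\le 1/q_n^\ast$, where $q_n^\ast$ is the least denominator of a rational slope admitting a line of sight for $\overline{H_n}$. By the slope estimate in the proof of Theorem~\ref{PropInfiniteHallwaysUniqueSlope}, every such slope lies in an interval $I_n$ of width at most $2/n$; and by Lemma~\ref{PropSightForUnframedStillSame} the unframed hallway $\bar H$, hence $\overline{H_n}$, admits $\ell_{\alpha\beta}$, so $\alpha\in I_n$.

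It remains to check $q_n^\ast\to\infty$. Given any $Q$, the set of rationals with denominator at most $Q$ lying in the fixed bounded interval $[\alpha-1,\alpha+1]$ is finite and, as $\alpha$ is irrational, stays a positive distance $\delta_Q>0$ from $\alpha$. Once $n$ is large enough that $2/n<\delta_Q$, the interval $I_n$ --- which contains $\alpha$ and has width at most $2/n$ --- lies inside $(\alpha-\delta_Q,\alpha+\delta_Q)$ and so contains no rational of denominator $\le Q$; hence $q_n^\ast>Q$. Therefore $d_R(H,P^{(n)})\to 0$, so $H\in\bar{\mathcal Q}$, and with $H\notin\mathcal Q$ we conclude $H\in\mathcal Q^c$. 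The one genuinely delicate point is the second paragraph: one must check that the periodic extension $P^{(n)}$ both reproduces $H_n$ and honestly admits the fattened rational line as a line of sight. That, however, is exactly the computation already carried out in the proof of Theorem~\ref{PropRatiffRatSight}, so I do not expect any real obstacle.
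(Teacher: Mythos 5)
Your proof is correct and follows essentially the same route as the paper: approximate $H$ by periodic hallways built from rational lines of sight of the truncations $H_n$, and show $d_R\to 0$ because the minimal denominator of a rational slope admitting a line of sight for $\overline{H_n}$ must tend to infinity when $\alpha\notin\Q$. You fill in two details the paper leaves implicit (the explicit periodic extension via the floor formula with $\gamma_n$ chosen off the lattice, and the quantitative $\delta_Q$ argument forcing $q_n^\ast\to\infty$), but the skeleton is identical.
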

	\begin{proof}
		Since $H$ is an aperiodic infinite hallway, $H\notin \mathcal Q$.  To show $H\in\mathcal Q^c$,
		we must show that there is a sequence $H^{(i)}$ of periodic infinite hallways such that
		$H^{(i)}\to H$ with respect to $d_R$.

		Let $\ell_{\alpha\beta}^\epsilon$ be the unique line of sight for $H$
		and let $(d_i,d_i+1)$ be the $i$th doorway of $H$.  Without loss of generality,
		assume $\alpha\in[0,1]$.  By Theorem \ref{PropRatiffRatSight},
		$\alpha\notin\Q$.
		By Lemma \ref{PropSightForUnframedStillSame}, the unframed infinite hallway $\bar H$
		admits the unique line of sight $\ell_{\alpha\beta}^\epsilon$.  

		Recall that $H_n$ is the restriction of $H$ to the first $n$ doorways.  Fix $q\in\N$.  Now,
		since there are only a finite number of rationals of the form $\frac{p}{q}\in[0,1]$,
		it must be the case that for large enough $n$, $H_n$ admits no line of sight of the form
		$\ell_{\tfrac{p}{q}\delta}$, lest $\alpha=\tfrac{p}{q}$.  Similarly, for large enough
		$n$, $\bar H_n$ admits no line of sight of the form $\ell_{\tfrac{p}{q}\delta}$.

		To complete the proof, notice that since $H_n$ is a finite hallway that admits a line of
		sight, it admits a rational line of sight $\ell_{\gamma_i\delta_i}$.  Thus, there exists a 
		periodic infinite hallway $H^{(n)}$ admitting the line of sight $\ell_{\gamma_i\delta_i}$ and satisfying
		\[
			H_n \subseteq \comm(H^{(n)},H).
		\]
		Further, by construction $d_R(H^{(n)},H)\to 0$.
	\end{proof}

	An immediate corollary of Proposition \ref{PropQcContainsAperiodics} is that $\mathcal Q^c$ is non-empty.
	Next, we will show that every hallway in $\mathcal Q^c$ admits a line of sight.

	\begin{lemma}\label{PropLinesOfSightClosedinUnframed}
		Let $H$ be an infinte hallway and $H_n$ be the restriction of $H$ to the first $n$ doorways.
		If $H_n$ admits a line of sight for every $n$, then the unframed hallway $\bar H$ admits a line
		of sight.
	\end{lemma}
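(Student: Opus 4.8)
The plan is to exploit the compactness-flavored structure already established for finite hallways: by Corollary~\ref{PropNPlus1}, for each fixed slope $\alpha$ there are only finitely many $n$-hallways admitting a line of sight of slope $\alpha$, and more to the point, the slopes of lines of sight for $H_n$ form a closed sub-interval of $[d_0-(d_n+1)/n, (d_0+1-d_n)/n]$ (as computed in the proof of Theorem~\ref{PropInfiniteHallwaysUniqueSlope}). So first I would, for each $n$, let $A_n\subseteq[0,1]$ (after reducing mod $1$ and using that the problem is translation-invariant, or just working in $\R$) denote the set of slopes $\alpha$ such that $H_n$ admits a line of sight of slope $\alpha$. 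Using Proposition~\ref{PropIntervalSight} and the tube construction, $A_n$ is a nonempty closed interval (or a finite union of closed intervals after the mod-$1$ identification, but on a suitable fundamental domain it is a single closed interval). Since $H_{n+1}\supseteq H_n$ means every line of sight for $H_{n+1}$ is one for $H_n$, we get $A_{n+1}\subseteq A_n$. A nested sequence of nonempty compact sets has nonempty intersection, so pick $\alpha^*\in\bigcap_n A_n$.

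Next I would fix this $\alpha^*$ and run a second nested-intersection argument on the intercepts. For each $n$, using the unframed hallway $\bar H_n$ with doorways $\bar D_i=[d_i,d_i+1]$, let
\[
    D_n = \bigcap_{0\le i\le n}\proj_{\alpha^*}\bigl(\{i\}\times[d_i,d_i+1]\bigr).
\]
Each $D_n$ is a finite intersection of closed bounded intervals, hence compact, and nonempty precisely because $\alpha^*\in A_n$ guarantees $H_n$ (and therefore $\bar H_n$) admits a line of sight of slope $\alpha^*$; indeed the closed-interval version of the tube argument shows the intercept set for $\bar H_n$ is exactly $D_n$. Clearly $D_{n+1}\subseteq D_n$. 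Again a nested sequence of nonempty compacta has nonempty intersection, so choose $\beta^*\in\bigcap_n D_n$. Then for every $i$, $\beta^*\in\proj_{\alpha^*}(\{i\}\times[d_i,d_i+1])$, i.e. the real line $\ell_{\alpha^*\beta^*}$ meets every $\{i\}\times\bar D_i$, which by the definition of the visibility operator (together with Proposition~\ref{PropLittleNudge} read in reverse — a real line meeting every closed doorway is exactly the condition for the fattened line to be a line of sight for the unframed hallway) says that $\ell_{\alpha^*\beta^*}^\epsilon$ is a line of sight for $\bar H$.

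The main obstacle is making the first step honest: the slope-set $A_n$ lives most naturally in $\R$, but I want to invoke compactness, so I need a uniform a priori bound. That bound is supplied by the inequality in the proof of Theorem~\ref{PropInfiniteHallwaysUniqueSlope}: since $\bar D_0$ and $\bar D_n$ are fixed once $H$ is fixed (recall $D_0=(0,1)$), the slopes of lines of sight for $H_n$ all lie in $[-(d_n+1),\,1-d_n]/n$, and in particular $A_1$ is already a bounded closed interval; hence all $A_n\subseteq A_1$ are compact, and the nested intersection $\bigcap A_n$ is nonempty. The only other mild subtlety is confirming that ``$\ell_{\alpha\beta}$ meets $\{i\}\times\bar D_i$ for all $i$'' is genuinely equivalent to ``$\ell_{\alpha\beta}^\epsilon\vee\bar H\ne\emptyset$,'' which is exactly the unframed analogue of the discussion preceding Lemma~\ref{PropDinterval} and of Lemma~\ref{PropSightForUnframedStillSame}; I would cite those rather than re-derive the bookkeeping.
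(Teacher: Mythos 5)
Your proof is essentially the paper's: the paper likewise introduces the slope sets $A(\bar H_n)$ of the unframed finite hallways, notes they are nested, closed, bounded, and nonempty, and extracts a common slope via the finite intersection property. Your second nested-intersection argument on the intercept sets $D_n$ is a step the paper only asserts implicitly (through the unproved identity $A(\bar H)=\bigcap_n A(\bar H_n)$, which really does require choosing a single $\beta^*$ working for all $n$), so writing it out is an improvement. The one step you must repair is the first: you define $A_n$ as the slope set of the \emph{open} hallway $H_n$ and claim, citing Proposition~\ref{PropIntervalSight}, that it is a closed interval. That proposition shows the opposite --- every admissible slope for an open finite hallway has a whole neighborhood of admissible slopes, so $A_n$ is open --- and a nested sequence of nonempty open intervals can have empty intersection; this is precisely why the lemma can only conclude that the \emph{unframed} hallway $\bar H$ sees. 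The fix is the one the paper uses, and the one you already apply to the intercepts: run the compactness argument on the slope sets of $\bar H_n$, which contain the nonempty $A_n$ and are the projections onto the $\alpha$-axis of the compact convex sets $\left\{(\alpha,\beta): d_i\le \alpha i+\beta\le d_i+1,\ 0\le i\le n\right\}$, hence nonempty compact intervals. With that substitution, and your closing check that a real line meeting every closed doorway makes $\ell^\epsilon_{\alpha^*\beta^*}$ a line of sight for $\bar H$, the argument is correct.
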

	\begin{proof}
		For a hallway $K$, let 
		\[
			A(K) = \{\alpha: \ell_{\alpha\beta}^\epsilon\text{ is a line of sight for }K\text{ for some }\beta\}.
		\]
		By assumption, $A(H_n)\neq \emptyset$, and $A(H_m)\subseteq A(H_n)$ for all $m>n$.  Further notice
		that $
			A(H) = \bigcap_{n\in\N} A(H_n).
		$
		Let $\bar H_n$ be the unframed hallway corresponding to $H_n$.  Since $A(\bar H_n)$ is closed and bounded
		and $\{A(\bar H_n)\}$ satisfies the finite intersection property, $A(\bar H) = \bigcap_{n\in\N} A(\bar H_n)\neq \emptyset$,
		and so $\bar H$ admits a line of sight.
	\end{proof}

	\begin{proposition}\label{PropQcAdmitsLinesofSight}
		If $H\in\mathcal Q^c$, then $H$ admits a line of sight.
	\end{proposition}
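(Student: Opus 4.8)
The plan is to show that $H$ inherits a line of sight from the periodic hallways that converge to it, by first obtaining a line of sight for the unframed hallway $\bar H$ via Lemma \ref{PropLinesOfSightClosedinUnframed} and then sharpening it back to $H$ using the order structure on $\R^\epsilon$.

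Fix $H\in\mathcal Q^c$ together with a sequence $H^{(i)}\in\mathcal Q$ of periodic hallways admitting lines of sight with $d_R(H^{(i)},H)\to 0$, and let $m_i$ be the length of $\comm(H^{(i)},H)$. The first step is to force $m_i\to\infty$. By definition $d_R(H^{(i)},H)=1/q_i$, where $q_i$ is the least denominator of a rational line of sight for the finite hallway $\bar H_{m_i}=\comm(\bar H^{(i)},\bar H)$, so $d_R(H^{(i)},H)\to 0$ gives $q_i\to\infty$. On the other hand, since $H^{(i)}$ admits a line of sight so does its restriction $H^{(i)}_{m_i}=H_{m_i}$, hence so does $\bar H_{m_i}$; the set of admissible parameters $(\alpha,\beta)$ for $\bar H_{m_i}$ is then a nonempty bounded polygon cut out by lines of slopes $0,-1,\dots,-m_i$, and the vertex computation in the proof of Theorem \ref{PropBigTheorem} produces a vertex with first coordinate $p/q$ for some $q\le m_i$, i.e. a rational line of sight for $\bar H_{m_i}$ with denominator at most $m_i$. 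Hence $q_i\le m_i$ and $m_i\to\infty$. In particular, for every $n$, once $m_i\ge n$ the hallway $H_n$ is a sub-hallway of $H^{(i)}$ and so admits a line of sight; Lemma \ref{PropLinesOfSightClosedinUnframed} then gives a line of sight $\ell^\epsilon_{\alpha\beta}$ for $\bar H$.

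The crux is upgrading $\ell^\epsilon_{\alpha\beta}$ to a line of sight for $H$ itself. Write $D_j=(d_j,d_j+1)$ for the $j$th doorway of $H$; since $\ell^\epsilon_{\alpha\beta}$ is a line of sight for every $\bar H_n$, we have $\alpha j+\beta\in[d_j,d_j+1]$ for all $j$. I first claim $\alpha\notin\Q$: if $\alpha=p/q$, then taking real parts in the visibility condition shows the unfattened line $\ell_{p/q,\beta}$ is a line of sight for every $\bar H_n$, in particular for each $\bar H_{m_i}=\comm(\bar H^{(i)},\bar H)$, so $d_R(H^{(i)},H)\ge 1/q$ for all $i$, contradicting $d_R(H^{(i)},H)\to 0$. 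So $\alpha$ is irrational. Next, $\ell^\epsilon_{\alpha\beta}$ cannot graze both a ``left'' lattice endpoint $d_k$ and a ``right'' lattice endpoint $d_{k'}+1$, since $\alpha k+\beta=d_k$ and $\alpha k'+\beta=d_{k'}+1$ would force $\alpha=\frac{d_{k'}+1-d_k}{k'-k}\in\Q$. Hence either $\alpha j+\beta\in[d_j,d_j+1)$ for all $j$, or $\alpha j+\beta\in(d_j,d_j+1]$ for all $j$. In the first case the point $\beta+\epsilon$, which has real part $\beta$, lies in $(d_j-\alpha j,\,d_j+1-\alpha j)\subset\R^\epsilon$ for every $j$ (here $\alpha\neq 0$ is used), so $\ell^\epsilon_{\alpha\beta}\vee H\neq\emptyset$; in the second case $\beta-\epsilon$ serves the same purpose. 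Either way $H$ admits the line of sight $\ell^\epsilon_{\alpha\beta}$.

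I expect the main obstacle to be exactly this last paragraph, resting on two observations: (a) an irrational slope for $\bar H$ cannot descend to the common segments $\comm(\bar H^{(i)},\bar H)$ as a bounded-denominator rational line of sight, so the slope produced by Lemma \ref{PropLinesOfSightClosedinUnframed} is necessarily irrational; and (b) the only way a line of sight for $\bar H$ fails to remain a line of sight for $H$ is by simultaneously grazing a left and a right lattice endpoint, which pins the slope to a rational value. The verification that $m_i\to\infty$ and the final infinitesimal nudging are routine once these two points are in hand.
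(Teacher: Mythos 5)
Your proposal is correct and follows essentially the same route as the paper: show $|\!\comm(H^{(i)},H)|\to\infty$, invoke Lemma \ref{PropLinesOfSightClosedinUnframed} to get a line of sight $\ell_{\alpha\beta}^\epsilon$ for $\bar H$ with $\alpha\notin\Q$, and then use irrationality to rule out the line grazing both a left endpoint $d_k$ and a right endpoint $d_{k'}+1$. The only cosmetic difference is in the last step, where the paper argues by taking a minimal failing index $k$ and applying Proposition \ref{PropLittleNudge} to produce the forbidden second grazing point, whereas you observe the one-sided-grazing dichotomy globally and nudge by $\pm\epsilon$ accordingly; both rest on the same computation that double grazing forces $\alpha\in\Q$.
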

	\begin{proof}
		Let $H\in\mathcal Q^c$ and suppose the sequence $H^{(i)}\in\mathcal Q$ satisfies $H^{(i)}\to H$ with
		respect to $d_R$. Let $D_i=(d_i,d_i+1)$ be the $i$th doorway of $H$.

		Let $|\!\comm(H^{(n)},H)|$ denote the number of doorways in $\comm(H^{(n)},H)$.
		Since $\comm(H^{(n)},H)$ admits a line of sight and is necessarily a finite
		hallway, $\comm(H^{(n)},H)$ must admit a rational line of sight with slope $p/q$
		where $q\leq |\!\comm(H^{(n)},H)|$.  Since $d_R(H^{(i)},H)\to 0$, we must have
		that $|\!\comm(H^{(n)},H)|\to\infty$.

		Now, by Lemma \ref{PropLinesOfSightClosedinUnframed}, the unframed hallway $\bar H$
		must admit a line of sight $\ell_{\alpha\beta}^\epsilon$ since $H_k\subseteq \comm(H^{(n)},H)$ for large enough $n$.
		Further, $\alpha\notin \Q$ and so $\ell_{\alpha\beta}^\epsilon$ must be unique.

		If we can show that $\ell_{\alpha\beta}^\epsilon$ is a line of sight for $H_k$, regardless of $k$,
		then $\ell_{\alpha\beta}^\epsilon$ will be a line of sight for $H$.  Suppose this is not the case,
		and let $k$ be the smallest number such that $\ell_{\alpha\beta}^\epsilon$ is not a line of sight for
		$H_k$. Trivially, $k\geq 1$, and since $H^{(n)}\to H$ with respect to $d_R$ and $H_k\subseteq \comm(H^{(n)},H)$ for large enough $n$,
		we must have that $\ell_{\alpha\beta}^\epsilon$ is a line of sight for $\bar H_k$.  

		Since $\ell_{\alpha\beta}^\epsilon$
		is a line of sight for $H_{k-1}$ but not for $H_k$, we must have either $\alpha k+\beta=d_k$
		or $\alpha k+\beta=d_{k}+1$.

		Assume $\alpha k+\beta=d_k$.  Since $\ell_{\alpha\beta}^\epsilon$ is not a line of sight
		for $H_k$, we have that $\ell_{\alpha\beta}+\epsilon$ is not a line of sight for $H_k$.
		Since $\alpha k+\beta+\epsilon\in D_k$, this means $\ell_{\alpha\beta}+\epsilon$ is not a line
		of sight for $H_{k-1}$.  So, by Proposition \ref{PropLittleNudge}, $\ell_{\alpha\beta}-\epsilon$
		must be a line of sight for $H_{k-1}$.  We conclude that for some $0\leq i<k$, we must have
		$\alpha i+\beta = d_i+1$.  Thus, $\alpha=\frac{d_k-d_i-1}{k-i}\in\Q$, which is a contradiction.

		Assuming $\alpha k+\beta =d_k+1$, the proof follows similarly.
	\end{proof}

	\begin{corollary}\label{PropQbarAdmitsLinesofSight}
		If $H\in \bar {\mathcal Q}$ then $H$ admits a line of sight.
	\end{corollary}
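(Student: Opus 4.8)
The plan is to observe that $\bar{\mathcal Q}$ decomposes as the disjoint union $\mathcal Q \cup \mathcal Q^c$ and to handle each piece with a result already in hand. First I would note that by definition $\mathcal Q$ consists of infinite periodic hallways that admit lines of sight, so any $H \in \mathcal Q$ admits a line of sight trivially. Then I would recall that $\mathcal Q^c = \bar{\mathcal Q} \setminus \mathcal Q$, so the remaining case is $H \in \mathcal Q^c$, and here Proposition \ref{PropQcAdmitsLinesofSight} applies directly to give that $H$ admits a line of sight.

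Since $\bar{\mathcal Q} = \mathcal Q \cup \mathcal Q^c$ covers every element of the closure, these two cases are exhaustive, and the corollary follows. There is no real obstacle here: the work has been front-loaded into Proposition \ref{PropQcAdmitsLinesofSight} (and, upstream, Lemma \ref{PropLinesOfSightClosedinUnframed} and the finite-intersection argument it rests on). The only thing worth being careful about is that the case split genuinely exhausts $\bar{\mathcal Q}$, which is immediate from the definition $\mathcal Q^c = \bar{\mathcal Q}\setminus\mathcal Q$, so no element is missed.

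Concretely, the proof reads: Let $H \in \bar{\mathcal Q}$. If $H \in \mathcal Q$, then $H$ is a periodic hallway admitting a line of sight by the definition of $\mathcal Q$. Otherwise $H \in \bar{\mathcal Q}\setminus\mathcal Q = \mathcal Q^c$, and Proposition \ref{PropQcAdmitsLinesofSight} shows $H$ admits a line of sight. In either case $H$ admits a line of sight, completing the proof.
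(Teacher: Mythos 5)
Your proof is correct and is essentially identical to the paper's: both split $\bar{\mathcal Q}=\mathcal Q\cup\mathcal Q^c$, dispose of $\mathcal Q$ by definition, and invoke Proposition \ref{PropQcAdmitsLinesofSight} for $\mathcal Q^c$.
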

	\begin{proof}
		By definition $\bar{\mathcal Q}=\mathcal Q\cup \mathcal Q^c$.
		If $H\in\mathcal Q$, then by definition it admits a line of sight and by Proposition \ref{PropQcAdmitsLinesofSight},
		if $H\in\mathcal Q^c$, $H$ admits a line of sight.
	\end{proof}

	We are almost ready to prove the semi-continuity of $V$.  But first, let us completely characterize
	the set of hallways that admit lines of sight.

	\begin{theorem}\label{PropVisIsQbar}
		Let $\mathcal V= \{H\in\mathcal H:V(H)=1\}$ be the set of infinite hallways that admit lines of sight.
		Then, $\bar {\mathcal Q} =\mathcal V$.
	\end{theorem}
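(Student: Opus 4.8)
The plan is to prove the two inclusions $\bar{\mathcal Q}\subseteq\mathcal V$ and $\mathcal V\subseteq\bar{\mathcal Q}$ separately, assembling the results already in hand. The inclusion $\bar{\mathcal Q}\subseteq\mathcal V$ requires nothing new: Corollary \ref{PropQbarAdmitsLinesofSight} says precisely that every $H\in\bar{\mathcal Q}$ admits a line of sight, hence $V(H)=1$ and $H\in\mathcal V$.

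For the reverse inclusion I would take an arbitrary $H\in\mathcal V$ and split on whether $H$ is periodic or aperiodic — an exhaustive dichotomy by the definition of periodic hallways. If $H$ is periodic, then since it admits a line of sight it lies in $\mathcal Q$ by the very definition of $\mathcal Q$, and $\mathcal Q\subseteq\bar{\mathcal Q}$ because a closure contains the set it closes. If $H$ is aperiodic, then Proposition \ref{PropQcContainsAperiodics} places $H$ in $\mathcal Q^c$, and $\mathcal Q^c\subseteq\bar{\mathcal Q}$ by the definition $\mathcal Q^c=\bar{\mathcal Q}\backslash\mathcal Q$. In either case $H\in\bar{\mathcal Q}$, which completes the proof.

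I do not anticipate a genuine obstacle here: this theorem is the capstone that merely records the union of two facts already established, namely that aperiodic hallways admitting lines of sight sit in $\mathcal Q^c$ (Proposition \ref{PropQcContainsAperiodics}) and that every point of $\bar{\mathcal Q}$ — in particular every such limit — still admits a line of sight (Corollary \ref{PropQbarAdmitsLinesofSight}, resting on Propositions \ref{PropQcAdmitsLinesofSight} and \ref{PropLinesOfSightClosedinUnframed}). The only thing left to verify is the trivial set-theoretic bookkeeping that ``periodic and admitting a line of sight'' together with ``aperiodic and admitting a line of sight'' exhausts $\mathcal V$, which matches $\mathcal Q\cup\mathcal Q^c=\bar{\mathcal Q}$.
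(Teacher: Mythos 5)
Your proof is correct and follows the same decomposition as the paper: the forward inclusion comes from Corollary \ref{PropQbarAdmitsLinesofSight}, and the reverse inclusion splits $\mathcal V$ into periodic hallways (which lie in $\mathcal Q$ by definition of $\mathcal Q$) and aperiodic ones. The only difference is that where you cite Proposition \ref{PropQcContainsAperiodics} for the aperiodic case, the paper re-runs that periodic-approximation construction in full---giving a more careful argument that the minimal denominators $q_k$ must tend to infinity, and hence that $d_R(H^{(k)},H)\to 0$, than the rather terse ending of Proposition \ref{PropQcContainsAperiodics}---but since that proposition is stated and proved earlier, your citation is legitimate and your bookkeeping is sound.
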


	\begin{proof}
		By Corollary \ref{PropQbarAdmitsLinesofSight}, $\bar{\mathcal Q}\subseteq \mathcal V$.  Now,
		suppose $H\in \mathcal V$.  If $H$ is a periodic hallway, then $H\in\mathcal Q$.

		Suppose $H$ is an aperiodic hallway.  Since $H$ admits a line of sight, so 
		does the finite hallway $H_k$.  Thus $H_k$ admits a rational line of sight $\ell_{\alpha_k\beta_k}$
		where $\alpha_k=\frac{p_k}{q_k}$
		by Theorem \ref{PropRatiffRatSight}.  Let $H^{(k)}$ be the infinite periodic hallway admitting the line of sight
		$\ell_{\alpha_k\beta_k}^\epsilon$.  We will now show $H^{(k)}\to H$ with respect to $d_R$.

		Since $H^{(k)}_k= H_k$, we have $H^{(k)}\to H$ with respect to $d_S$.  Let $\ell_{\frac{p_k}{q_k}\delta_n}$
		be a line of sight for $\bar H_k$ such that $q_k$ is as small as possible.  If $q_k\to \infty$, we are done.
		Suppose $\{q_k\}$ is bounded and let $\bar D_0=[d_0,d_0+1]$ and $\bar D_1=[d_1,d_1+1]$ be the first two doorways of $\bar H$.
		Since $d_1-d_0-1\leq \frac{p_k}{q_k} \leq d_1+1-d_0$ and there are only finitely
		many $q_k$, there exists $p/q\in\{p_k/q_k:k\in\N\}$ such that $p/q=p_k/q_k$ for infinitely many $k$.    
		It follows that $\proj_{p/q} \bar H_k$ is a non-empty closed interval for all $k$ and so $\proj_{p/q} \bar H$ is non-empty.
		Thus, $\bar H$ admits a rational line of sight with slope $p/q$, which is a contradiction.
	\end{proof}

	\begin{theorem}
		$V:\mathcal H\to\{0,1\}$ is upper-semicontinuous with respect to $d_R$.
	\end{theorem}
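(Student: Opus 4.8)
The plan is to reduce the statement to the fact, already established, that $\mathcal V=\bar{\mathcal Q}$ is closed. Recall that a function $f\colon X\to\R$ is upper-semicontinuous precisely when $f^{-1}\bigl((-\infty,c)\bigr)$ is open for every $c\in\R$. For a function taking only the values $0$ and $1$, this condition is vacuous unless $0<c\leq 1$, in which case it says exactly that $V^{-1}(\{0\})$ is open, equivalently that $V^{-1}(\{1\})$ is closed. So the entire content of the theorem is the assertion that $\mathcal V=\{H\in\mathcal H:V(H)=1\}$ is a closed subset of $\mathcal H$ with respect to $d_R$.

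First I would invoke Theorem \ref{PropVisIsQbar}, which identifies $\mathcal V$ with $\bar{\mathcal Q}$. Since $\bar{\mathcal Q}$ is by definition the closure of $\mathcal Q$ under the metric $d_R$, it is closed. Hence $V^{-1}(\{1\})=\mathcal V=\bar{\mathcal Q}$ is closed and $V^{-1}(\{0\})=\mathcal H\setminus\bar{\mathcal Q}$ is open, which is what upper-semicontinuity requires.

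If one prefers the sequential phrasing highlighted in the discussion before Proposition \ref{PropDiscont}, I would instead argue: suppose $H^{(n)}\to H$ with respect to $d_R$ and $V(H^{(n)})=1$ for all $n$; then each $H^{(n)}\in\mathcal V=\bar{\mathcal Q}$, and since $\bar{\mathcal Q}$ is closed in the metric space $(\mathcal H,d_R)$ it contains the limit, so $H\in\bar{\mathcal Q}=\mathcal V$ and $V(H)=1$. Because $d_R$ is a genuine metric (established earlier), this sequential statement is equivalent to the topological one, so either route completes the proof.

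There is essentially no obstacle remaining here: all the work has been front-loaded into Theorem \ref{PropVisIsQbar} (and, behind it, Propositions \ref{PropQcContainsAperiodics} and \ref{PropQcAdmitsLinesofSight} together with Proposition \ref{PropQcClosed}). The only thing to be careful about is the bookkeeping in the reduction — making sure that ``upper-semicontinuous'' is unwound to ``$V^{-1}(\{1\})$ closed'' and not to ``$V^{-1}(\{1\})$ open'', and noting that the defining inequality $\limsup_{H'\to H}V(H')\le V(H)$ is automatic at points where $V(H)=1$, so that only the points of $V^{-1}(\{0\})$ need a neighborhood argument, which the openness of the complement of the closed set $\bar{\mathcal Q}$ supplies.
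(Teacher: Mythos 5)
Your proposal is correct and follows essentially the same route as the paper: both reduce the claim to the closedness of $\mathcal V=\bar{\mathcal Q}$ via Theorem \ref{PropVisIsQbar} and the fact that $\bar{\mathcal Q}$ is a closure (hence closed), with Corollary \ref{PropQbarAdmitsLinesofSight} supplying $V(H)=1$ for the limit. The only difference is your explicit unwinding of upper-semicontinuity into the preimage/sequential formulations, which the paper takes for granted.
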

	\begin{proof}
		Let $H^{(n)}\to H$ with respect to $d_R$ and suppose $V(H^{(n)})=1$.  By Theorem \ref{PropVisIsQbar},
		$H^{(n)}\in \bar{\mathcal Q}$, and so by definition $H\in\bar{\mathcal Q}$.  Now, by Corollary
		\ref{PropQbarAdmitsLinesofSight}, $V(H)=1$.
	\end{proof}

\section{Applications of $d_R$}

	Theorem \ref{PropInfiniteHallwaysUniqueSlope} states that if an infinite
	hallway admits a line of sight, its slope is unique.  Thus, we may define a function
	$s:\bar {\mathcal Q}\to\R$ by
	\[
		s(H) = \alpha\text{ where }\ell_{\alpha\beta}^\epsilon\text{ is a line of sight for }H.
	\]
	Let $s^{-1}$ be the set-valued right-inverse to $s$.  That is
	\[
		s^{-1}(\alpha) = \{H:s(H)=\alpha\},
	\]
	and we have the equality $s\circ s^{-1} = \mathrm{id}$. Now, any metric $d$ on 
	infinite hallways induces a metric $\tilde d$ on $\R$ via
	\[
		\tilde d(\alpha, \gamma) = d(s^{-1}(\alpha),s^{-1}(\gamma)).
	\]

	Two metrics $d_X$ and $d_Y$ are said to be \emph{equivalent} if their convergent sequences
	are the same.  That is, $d_X(x_i, x)\to 0$
	if and only if $d_Y(x_i,x)\to 0$ for all sequences $(x_i)$.
	\begin{proposition}
		The metric $\tilde d_S$ on $\R$ induced by the metric $d_S$ is 
		equivalent to the standard metric  $d$ on $\R$ given by $d(\alpha,\gamma) = |\alpha-\gamma|$.
	\end{proposition}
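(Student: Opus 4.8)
The plan is to show that $\tilde d_S$ and $d$ have the same convergent sequences by sandwiching $\tilde d_S(\alpha,\gamma)$ between quantities that vanish exactly when $|\alpha-\gamma|$ does. Throughout I use that every infinite hallway is normalised so that $D_0=(0,1)$, i.e.\ $d_0=0$, I reduce to slopes in $[0,1]$ (the general case being handled by the same extension alluded to after Proposition \ref{PropYPartitions}), and I read $d_S$ on the (non-singleton) fibres $s^{-1}(\alpha)$ as the infimum of $d_S$ over pairs. The two halves of the argument are as follows.

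\emph{First half.} Suppose $H\in s^{-1}(\alpha)$ and $H'\in s^{-1}(\gamma)$ agree on their first $k+1$ doorways $D_0,\dots,D_k$, and set $\bar D_i=[d_i,d_i+1]$. Exactly as in the proof of Theorem \ref{PropInfiniteHallwaysUniqueSlope}, a line of sight for $H$ must pass through $\bar D_0=[0,1]$ and $\bar D_k=[d_k,d_k+1]$, which forces its slope $\alpha$ into $[(d_k-1)/k,(d_k+1)/k]$; the identical constraint applies to $\gamma$ because $\bar D_k$ is common to $H$ and $H'$, so $|\alpha-\gamma|\leq 2/k$. Since $H$ and $H'$ agree through index $1/d_S(H,H')-1$ (and $|\alpha-\gamma|\leq 1$ in the trivial case $d_S(H,H')=1$), taking the infimum over such $H,H'$ yields $|\alpha-\gamma|\leq 2\tilde d_S(\alpha,\gamma)/(1-\tilde d_S(\alpha,\gamma))$. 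In particular $\tilde d_S(\alpha_i,\gamma)\to 0$ implies $|\alpha_i-\gamma|\to 0$.

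\emph{Second half.} Fix $\gamma$ and pick any $H^*\in s^{-1}(\gamma)$. For each $N$, the finite hallway $H^*_N$ admits a line of sight, so by Proposition \ref{PropIntervalSight} the set $I_N$ of slopes of lines of sight of $H^*_N$ is open, and it contains $\gamma$. Choose $\delta_N>0$ with $(\gamma-\delta_N,\gamma+\delta_N)\subseteq I_N$. If $|\alpha-\gamma|<\delta_N$, then $H^*_N$ has a real line of sight of slope $\alpha$; fattening it and, using Proposition \ref{PropLittleNudge}, extending $H^*_N$ one doorway at a time to the right — each new doorway being the unique open unit interval through which the fattened line passes (well defined because of the infinitesimal, exactly as in Proposition \ref{PropDesireableProperty}) — produces an infinite hallway $H\in s^{-1}(\alpha)$ with $H_N=H^*_N$. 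Hence $\tilde d_S(\alpha,\gamma)\leq d_S(H,H^*)\leq 1/(N+1)$. Letting $N\to\infty$ shows $|\alpha_i-\gamma|\to 0$ implies $\tilde d_S(\alpha_i,\gamma)\to 0$, and together with the first half this establishes equivalence.

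I expect the only real friction to be bookkeeping: fixing precisely how $d_S$ is transported to the fibres $s^{-1}(\alpha)$, and verifying carefully that the greedy rightward extension in the second half yields a hallway that genuinely admits the intended \emph{infinite} line of sight (so that it lies in $s^{-1}(\alpha)$). Both are routine given Propositions \ref{PropLittleNudge} and \ref{PropDesireableProperty}.
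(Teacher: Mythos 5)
Your proposal is correct and follows essentially the same route as the paper's proof: one direction from the fact that two hallways sharing doorways at positions $0$ and $k$ force their slopes within $2/k$ of each other, and the other from the openness of the slope set of a finite hallway (Proposition \ref{PropIntervalSight}). The only difference is that you make explicit the step the paper leaves implicit—extending the common finite segment to an infinite hallway in $s^{-1}(\alpha)$ via the greedy/infinitesimal construction of Proposition \ref{PropDesireableProperty}—which is a welcome clarification rather than a deviation.
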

	\begin{proof}
		We will first show that convergence in $d$ implies convergence in $\tilde d_S$.
		Let $(x_i)$ be a sequence and suppose $d(x_i,x)=|x_i-x|\to 0$.  Fix $k>0$ and let 
		$H_k$ be a finite hallway with doorways $D_i$ admitting a line of sight $\ell_{x\beta}$
		for some $\beta$.  Now, for any $y\in\R$,
		define
		\[
			D^y = \bigcap_{i\leq k} \proj_y(\{i\}\times D_i)
		\]
		and note that $D^y=(l^y,r^y)$ is always an open interval or the empty set.  
		Further, $D^{x_i}\to D^x$
		in the sense that $l^{x_i}\to l^x$ and $r^{x_i}\to r^x$.  Necessarily we have
		$\beta\in D^x$, but we also see that since $D^{x_i}\to D^x$, for all large enough $i$,
		we have
		$\beta\in D^{x_i}$.  Thus, for large enough $i$, $\ell_{x_i\beta}$ and $\ell_{x\beta}$
		are lines of sight for $H_k$ and so $\tilde d_S(x_i,x)\leq 1/k$.  But $k$
		was arbitrary, so $\tilde d_S(x_i,x)\to 0$.

		Now, suppose $\tilde d_S(x_i,x)\to 0$.  Fix $k>0$.  Now for all sufficiently
		large $i$, $\tilde d_S(x_i,x)\leq 1/k$.  Supposing $i$ is sufficiently
		large, we necessarily
		have that for some $\beta,\beta_i$, there exists a $k$-hallway, $H_k$, for which
		$\ell_{x\beta}$ and $\ell_{x_i\beta_i}$ are both lines of sight.  In particular,
		$\ell_{x\beta}$ and $\ell_{x_i\beta_i}$ both pass through the $k$th doorway of
		$H_k$ and so
		\[
			|(kx+\beta)-(kx_i+\beta_i)| \leq 1.
		\]
		By the reverse triangle inequality we have
		\[
			k|x-x_i|-|\beta-\beta_i| \leq |(kx+\beta)-(kx_i+\beta_i)|\leq 1.
		\]
		Since $\ell_{x\beta}$ and $\ell_{x_i\beta_i}$ both pass through the initial doorway
		of $H_k$, we know $|\beta-\beta_i|\leq 1$ and so $k|x-x_i|\leq 2$.  Thus, $|x-x_i|\leq 2/k$
		and so, since $k$ was arbitrary, $|x-x_i|\to 0$.
	\end{proof}

	The metric $\tilde d_S$ is equivalent to what we are used to in a metric on $\R$, but the
	metric $\tilde d_R$ is much stranger.

	\begin{proposition}
		The set $\R\backslash \Q$ of irrational numbers is closed with respect to $\tilde d_R$.
	\end{proposition}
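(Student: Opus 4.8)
The plan is to show that the complement $\Q$ is $\tilde d_R$-open; in fact I would prove the stronger statement that each rational is a $\tilde d_R$-isolated point, so that $\Q$ is a union of $\tilde d_R$-open singletons and hence $\R\setminus\Q$ is $\tilde d_R$-closed. The mechanism is the one already behind Proposition \ref{PropQcClosed}: a periodic hallway is $d_R$-isolated among the hallways admitting a line of sight, and under the slope correspondence the rational numbers are exactly the slopes coming from periodic hallways (Theorem \ref{PropRatiffRatSight}).

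Fix $\alpha=p/q$ in lowest terms and an arbitrary $\gamma\neq\alpha$. The first step is to observe that every $H\in s^{-1}(\alpha)$ is a periodic hallway admitting some line of sight $\ell_{\frac{p}{q}\beta}^\epsilon$ of slope $p/q$ (periodicity by Theorem \ref{PropRatiffRatSight}; in fact the minimal period is $q$, since a smaller period would produce, via Theorem \ref{PropRatiffRatSight} and the uniqueness of slope in Theorem \ref{PropInfiniteHallwaysUniqueSlope}, a second line of sight of a different slope). Since restricting a hallway only enlarges the complement of its projection, $\ell_{\frac{p}{q}\beta}^\epsilon$ is then a line of sight for every truncation $H_k$ of $H$.

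The second step estimates $d_R(H,H')$ for $H\in s^{-1}(\alpha)$ and $H'\in s^{-1}(\gamma)$. As the slopes differ, $H\neq H'$, so $\comm(H,H')$ is a finite hallway, and by definition it is one of the truncations $H_k$. If $\comm(H,H')$ admits no line of sight, then $d_R(H,H')=\infty\geq 1/q$. Otherwise, the definition of $d_R$ reads off the smallest denominator among rational lines of sight of the unframed hallway $\comm(\bar H,\bar H')=\bar H_k$, and here I would run the $\epsilon$-nudging argument of Proposition \ref{PropLittleNudge} and Lemma \ref{PropDinterval}: since $\ell_{\frac{p}{q}\beta}^\epsilon$ is a line of sight for $H_k$, one of $\ell_{\frac{p}{q}\beta}\pm\epsilon$ misses $H_k$, so the real line $\ell_{\frac{p}{q}\beta}$ meets every $\{i\}\times[d_i,d_i+1]$ and is a rational line of sight of slope $p/q$ for $\bar H_k$. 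Hence $d_R(H,H')\geq 1/q$ in every case.

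Because this bound is uniform over all $H\in s^{-1}(\alpha)$ and all $H'\in s^{-1}(\gamma)$, it propagates through the set-to-set distance defining $\tilde d_R$ (whether one reads that as an infimum or as a Hausdorff distance), giving $\tilde d_R(\alpha,\gamma)\geq 1/q>0$. Since $\gamma\neq\alpha$ was arbitrary, the $\tilde d_R$-ball of radius $1/(2q)$ about $\alpha$ equals $\{\alpha\}$, so $\{\alpha\}$ is $\tilde d_R$-open; taking the union over $\alpha\in\Q$ shows $\Q$ is $\tilde d_R$-open and therefore $\R\setminus\Q$ is $\tilde d_R$-closed. The one genuinely fussy point is the $\epsilon$-nudging in the middle step — confirming that the slope-$p/q$ infinite line of sight really does descend to an honest rational line of sight of the finite \emph{unframed} hallway to which the definition of $d_R$ refers — but this is the same kind of bookkeeping already carried out above, and I expect no conceptual obstacle.
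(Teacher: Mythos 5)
Your proof is correct and rests on exactly the same mechanism as the paper's: a hallway admitting a line of sight of slope $p/q$ keeps $d_R$-distance at least $1/q$ from every other hallway (the content of Proposition \ref{PropQcClosed}), so rational slopes are isolated in $(\R,\tilde d_R)$. The paper phrases this as a proof by contradiction with a convergent sequence of hallways, whereas your uniform lower bound over all pairs in $s^{-1}(\alpha)\times s^{-1}(\gamma)$ is, if anything, slightly more careful about the ambiguity in the set-to-set definition of $\tilde d_R$.
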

	\begin{proof}
		Suppose $(x_i)$ is a sequence of irrational real numbers and $x\in\Q$.  Further,
		suppose $\tilde d_R(x_i,x)\to 0$.  This implies the existence of hallways
		$H^{(i)}\in s^{-1}(x_i)$ so that $H^{(i)}\to H\in s^{-1}(x)$ with respect to 
		$d_R$.  However, since $x\in \Q$, $H$ is a periodic infinite hallway, and so by
		Proposition \ref{PropQcClosed}, $H^{(i)}\not\to H$, a contradiction.
	\end{proof}

	The set $\R\backslash \Q$ is clearly not closed under $\tilde d_S$.  We now have
	an unusual situation.  The set $\Q$ is dense in $\R$ (its closure is $\R$) under both
	$\tilde d_S$ and $\tilde d_R$, however $\R\backslash \Q$ is dense in $\R$ under $\tilde d_S$,
	but not $\tilde d_R$.
	Stranger still, according to the following proposition, $\tilde d_R$ does not change very much.

	\begin{proposition}\label{PropdSdRSameOnQc}
		The metrics $\tilde d_R$ and $\tilde d_S$ are equivalent when restricted to the
		set $\R\backslash \Q$ of irrational numbers.
	\end{proposition}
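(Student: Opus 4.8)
The plan is to prove equivalence on $\R\backslash\Q$ by routing through the usual metric. I will show that $\tilde d_R$ restricted to $\R\backslash\Q$ is equivalent to the metric $d(\alpha,\gamma)=|\alpha-\gamma|$; combined with the preceding proposition, which gives the same for $\tilde d_S$, and transitivity of equivalence, this is exactly what is wanted. So the whole task reduces to showing that for any sequence $(x_i)$ of irrationals and any irrational $x$, one has $\tilde d_R(x_i,x)\to 0$ if and only if $|x_i-x|\to 0$.

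One implication is essentially free. For any infinite hallways $H\neq H'$, the finite hallway $\comm(H,H')$ either admits no line of sight, so that $d_R(H,H')=\infty$, or it admits one, in which case Theorem \ref{PropBigTheorem} supplies a rational line of sight of slope $p/q$ with $q$ no larger than the number of doorways of $\comm(H,H')$; since passing to the unframed hallway only enlarges the set of lines of sight, in both cases $d_R(H,H')\geq d_S(H,H')$. Thus $\tilde d_R\geq\tilde d_S$ everywhere, and $\tilde d_R(x_i,x)\to 0$ forces $\tilde d_S(x_i,x)\to 0$, hence $|x_i-x|\to 0$ by the preceding proposition.

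The other implication is the real content. Assume $|x_i-x|\to 0$. Because $x$ and every $x_i$ are irrational, $k\alpha+\tfrac12$ is never an integer, so for any irrational slope $\alpha$ the real line $\ell_{\alpha,1/2}$ passes through the interior of the unit interval $(\floor{k\alpha+1/2},\floor{k\alpha+1/2}+1)$ for every $k\geq 0$; let $H_x$ and $H_i$ be the infinite hallways with these doorways for slopes $x$ and $x_i$, normalized so $D_0=(0,1)$. Let $n_i$ be the number of initial doorways on which $H_i$ and $H_x$ agree. Since $\min_{1\leq k\leq N}\mathrm{dist}(kx+\tfrac12,\Z)>0$ for each fixed $N$, the hallways $H_i$ and $H_x$ agree on at least $N$ initial doorways once $|x_i-x|$ is small enough, so $n_i\to\infty$. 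Now $\comm(\bar H_i,\bar H_x)$ is the unframed hallway formed from those $n_i$ common doorways and admits the real line of sight $\ell_{x,1/2}$; if $\ell_{\frac pq\delta}$ is any rational line of sight for it, then $\ell_{x,1/2}$ and $\ell_{\frac pq\delta}$ both pass through $\bar D_0=[0,1]$ and through the last of the common closed doorways, which (up to the harmless off-by-one in the doorway count) forces $|x-\tfrac pq|\leq 3/n_i$. For fixed $Q\in\N$ the number $c_Q=\min\{|x-\tfrac pq|:1\leq q\leq Q,\ |x-\tfrac pq|\leq 1\}$ is a minimum of finitely many positive reals and hence positive, so once $n_i>3/c_Q$ every rational line of sight of $\comm(\bar H_i,\bar H_x)$ has denominator exceeding $Q$, giving $d_R(H_i,H_x)<1/Q$. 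As $n_i\to\infty$ this shows $d_R(H_i,H_x)\to 0$, and therefore $\tilde d_R(x_i,x)\leq d_R(H_i,H_x)\to 0$.

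The main obstacle is precisely this last estimate, and it is where the hypothesis $x\in\R\backslash\Q$ is used in an essential way: the denominators of rational lines of sight of a lengthening common prefix are forced to infinity only because a fixed irrational cannot be approximated within $O(1/n)$ by a rational of denominator at most $Q$ for large $n$ — which is false for rational $x$, and this is exactly the obstruction flagged in the discussion preceding the statement. The rest is bookkeeping: I have suppressed the exact additive constant coming from the off-by-one in the size of $\comm$, and if one interprets the set-distance in the definition of $\tilde d$ as a Hausdorff distance rather than an infimum, one repeats the construction starting from an arbitrary hallway of slope $x_i$ against the slope-$x$ hallway with the same intercept, with a little extra care (via the infinitesimal $\epsilon$) at intercepts where the real line grazes the integer lattice.
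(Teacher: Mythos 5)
Your proof is correct and follows essentially the same route as the paper: both directions rest on the same two facts, namely that a common initial segment of length $n$ forces $d_R\geq 1/n$ (giving $\tilde d_R$-convergence $\Rightarrow$ $\tilde d_S$-convergence), and that an irrational $x$ is bounded away from all rationals of denominator at most $Q$, so rational lines of sight for a lengthening common prefix are forced to have large denominators. The only cosmetic differences are that you route through the standard metric on $\R$ via the preceding proposition and exhibit explicit witness hallways $H_x$, $H_i$, whereas the paper argues directly with $\tilde d_S$ and an arbitrary hallway of slope $\alpha$; neither change affects the substance.
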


	\begin{proof}
		First note that convergence in $d_R$ implies convergence in $d_S$
		since $d_R(H^{(i)},H)\to 0$ implies $|\comm(H^{(i)},H)|\to\infty$.  Thus
		convergence in $\tilde d_R$ implies convergence in $\tilde d_S$.

		Now, fix $\alpha\in \R\backslash \Q$ and $q\in\N$ and choose
		$\kappa>0$ so that the interval 
		\[
			B_\kappa(\alpha)=(\alpha-\kappa,\alpha+\kappa)\subset \R
		\] contains
		no rational points with denominator less than $q$.  Since $\tilde d_S$
		is equivalent to the standard metric on $\R$, there exists a $k$ so 
		that $\tilde d_S(\alpha,\gamma)<1/k$ implies $\gamma\in B_{\kappa}(\alpha)$.

		Now, if $H$ is an infinite hallway admitting a line of sight of slope
		$\alpha$, then $d_S(H,H')<1/k$ implies $d_R(H,H')\leq 1/q$.  Since $q$
		was arbitrary, if sequence of hallways converges to $H$ with respect
		to $d_S$, the same sequence converges to $H$ with respect to $d_R$.
		Thus, a sequence converging to $\alpha$ with respect to $\tilde d_S$ converges
		to $\alpha$ with respect to $\tilde d_R$.  This holds on all of $\R$ so long
		as $\alpha\in\R\backslash \Q$, and therefore it holds on all of $\R\backslash \Q$.
	\end{proof}

	We can also use $d_R$ to induce a metric on the set of sequences, $\Z^\N$, and in particular,
	the set of Sturmian sequences.  
	\begin{theorem}\label{ThmHallwaysEquivSturms}
		For an infinite hallway $H$, $\Phi(H)$ is a Sturmian sequence if and only
		if $H$ admits an infinite line of sight $\ell_{\alpha\beta}^\epsilon$ with
		$\alpha\in[0,1]$.
	\end{theorem}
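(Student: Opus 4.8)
The plan is to route both directions through the cited equivalence ``$x\in\{0,1\}^\N$ is Sturmian if and only if $x$ is a rotation sequence $\Rf(\alpha,\beta)$ or $\Rc(\alpha,\beta)$ for some $(\alpha,\beta)\in[0,1]\times\R$,'' using Proposition~\ref{PropLittleNudge} as the bridge between an infinite line of sight $\ell_{\alpha\beta}^\epsilon$ and an honest, infinitesimally nudged line $\ell_{\alpha\beta}\pm\epsilon$. Throughout I would use that $D_0=(0,1)$, so the doorway sequence $(d_i)$ of $H$ is recovered from $\Phi(H)$ by $d_0=0$ together with the consecutive differences, and that when the slope lies in $[0,1]$ those differences take only two values, which I identify with $\{0,1\}$ so that ``$\Phi(H)$ is Sturmian'' is literally what gets proved.

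For the forward direction, suppose $H$ admits $\ell_{\alpha\beta}^\epsilon$ with $\alpha\in[0,1]$. By Proposition~\ref{PropLittleNudge} one of $\ell_{\alpha\beta}+\epsilon$, $\ell_{\alpha\beta}-\epsilon$ misses $H$; say $\ell_{\alpha\beta}+\epsilon$ does (the other case is symmetric with ceilings). Then $\alpha i+\beta+\epsilon\in D_i=(d_i,d_i+1)$ for every $i$, and since $d_i\in\Z$ while $\alpha i+\beta\in\R$, a two-line case check (on whether $\alpha i+\beta\in\Z$) shows this is equivalent to $d_i\le\alpha i+\beta<d_i+1$, i.e.\ $d_i=\floor{\alpha i+\beta}$. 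Hence the $i$th consecutive difference of $H$ is $\floor{(i+1)\alpha+\beta}-\floor{i\alpha+\beta}=(\Rf(\alpha,\beta))_i$, so $\Phi(H)=\Rf(\alpha,\beta)$ with $\alpha\in[0,1]$ is a rotation sequence and therefore Sturmian. In the $\ell_{\alpha\beta}-\epsilon$ case the same computation gives $d_i<\alpha i+\beta\le d_i+1$, i.e.\ $d_i=\ceil{\alpha i+\beta}-1$, and $\Phi(H)=\Rc(\alpha,\beta)$, again Sturmian.

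For the converse, suppose $\Phi(H)$ is Sturmian. By the cited equivalence it equals $\Rf(\alpha,\beta)$ or $\Rc(\alpha,\beta)$ for some $(\alpha,\beta)\in[0,1]\times\R$; I treat the floor case, the other being identical. Summing the definition and using $d_0=0$ gives $d_i=\floor{i\alpha+\beta}-\floor{\beta}=\floor{i\alpha+\{\beta\}}$, so after replacing $\beta$ by $\{\beta\}\in[0,1)$ we have $d_i=\floor{i\alpha+\beta}$ for all $i$. Running the previous computation in reverse, $d_i\le i\alpha+\beta<d_i+1$ gives $\alpha i+\beta+\epsilon\in(d_i,d_i+1)=D_i$ for every $i$, so $\ell_{\alpha\beta}+\epsilon$ misses $H$ and hence $\ell_{\alpha\beta}^\epsilon$ is a line of sight for $H$ with slope $\alpha\in[0,1]$.

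The only step needing real care is the equivalence ``$\alpha i+\beta+\epsilon\in(d_i,d_i+1)$ for all $i$'' $\iff$ ``$d_i=\floor{\alpha i+\beta}$'' (and its ceiling twin): this is precisely where the infinitesimal pays off, since when $\alpha i+\beta$ is an integer it is the sign of the nudge that decides which of the two admissible doorways is forced, and it is what produces a bona fide $\Rf$ or $\Rc$ rather than some hybrid. One should also be slightly careful that the two-letter difference alphabet of $H$ is identified with $\{0,1\}$ consistently so the statement reads as written. As a sanity alternative for the forward direction one could argue geometrically — each length-$n$ subword of $\Phi(H)$ is a finite sub-hallway admitting a real line of sight of slope $\alpha$ (a small perturbation of the intercept handles the boundary cases), so Corollary~\ref{PropNPlus1} yields $L_n(\Phi(H))\le n+1$, and Theorem~\ref{PropRatiffRatSight} with the Hedlund--Morse dichotomy promotes this to the full Sturmian condition — but the rotation-sequence route is shorter and is needed for the converse in any case.
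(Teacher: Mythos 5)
Your proposal is correct and follows essentially the same route as the paper: both directions go through the rotation-sequence characterization of Sturmian sequences, with Proposition \ref{PropLittleNudge} selecting whether the doorways are given by floors (yielding $\Rf(\alpha,\beta)$) or ceilings (yielding $\Rc(\alpha,\beta)$). Your treatment is if anything slightly more careful about the boundary case $i\alpha+\beta\in\Z$ and about normalizing $\beta$ to its fractional part, but the argument is the paper's.
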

	\begin{proof}
		Suppose $H$ is an infinite hallway with doorways $D_i$.
		Further, suppose $\Phi(H)$ is a Sturmian sequence.  Then $\Phi(H)=\Rf(\alpha,\beta)$
		or $\Phi(H)=\Rc(\alpha,\beta)$ for some $\alpha,\beta\in[0,1]\times \R$.  If
		$\Phi(H)=\Rf(\alpha,\beta)$ then, because we assume the initial doorway of $H$ 
		is $D_0=(0,1)$, we have 
		\[
			D_i = (\floor{i\alpha+\beta}, \floor{i\alpha+\beta}+1).
		\]
		Similarly, if $\Phi(H)=\Rc(\alpha,\beta)$,
		\[
			D_i = (\ceil{i\alpha+\beta}, \ceil{i\alpha+\beta}+1).
		\]
		In either case, $H$ admits the infinite line of sight $\ell_{\alpha\gamma}^\epsilon$ where
		$\gamma=\beta\mod 1$.

		Now suppose that $H$ admits the infinite line of sight $\ell_{\alpha\beta}^\epsilon$.
		By Proposition \ref{PropLittleNudge}, $(\ell_{\alpha\beta}+\epsilon)\cap H=\emptyset$ or
		$(\ell_{\alpha\beta}-\epsilon)\cap H=\emptyset$.  Suppose $(\ell_{\alpha\beta}+\epsilon)\cap H=\emptyset$.
		Then
		\[
			D_i = (\floor{i\alpha+\beta}, \floor{i\alpha+\beta}+1)
		\]
		and so $\Phi(H) = \Rf(\alpha,\beta)$.  Alternatively, suppose $(\ell_{\alpha\beta}-\epsilon)\cap H=\emptyset$.
		Then
		\[
			D_i = (\ceil{i\alpha+\beta}, \ceil{i\alpha+\beta}+1)
		\]
		and so $\Phi(H)=\Rc(\alpha,\beta)$.  In either case, $\Phi(H)$ is a rotation sequence and therefore
		a Sturmian sequence.
	\end{proof}

	Recall that $\Omega=\Phi(\{H:V(H)=1\})$.  In light of Theorem \ref{ThmHallwaysEquivSturms},
	$\mathcal S=\Omega\cap\{0,1\}^\N$ is the set of all Sturmian sequences.  $\mathcal S$ is $T$-invariant ($T(\mathcal S)=\mathcal S$),
	but it is not closed with respect to the standard metric.  Let $\hat d_R$ be the metric
	on sequences induced by $d_R$.  Again, $\hat d_R$
	induces the same topology on $\mathcal S$ as $d$, the standard metric on sequences,
	however under $\hat d_R$, $\mathcal S$ is closed, and the set of periodic Sturmian sequences is dense.

%	\begin{acknowledgements}
%		xx
%	\end{acknowledgements}

\bibliographystyle{abbrv}
\bibliography{JasonSiefkenDoorwaysProblem}

\footnotesize
% \begin{thebibliography}{99}
%
%\bibitem{B} Thierry Bousch. 2000. Le poisson n'a pas d'ar\^etes. {\em
%Ann. Inst. Henri Poincar\'e, Probabilit\'es et Statistiques} 36 pp. 489--508.
%
%\bibitem{EO} Oliver Jenkinson. 2006. Ergodic Optimization. {\em
%Discrete and Continuous Dynamical Systems} pp. 197--224.
%
%\bibitem{S} Oliver Jenkinson. 2007. Optimiziation and Majorization of Invariant Measures. {\em
%Electronic Research Announcements of the American Mathematical Society} Vol. 13, pp. 1--12.
%
%\bibitem{Walters} Peter Walters.  1982. {\em Ergodic Theory}, Springer-Verlag
%New York.
%
%\end{thebibliography}

\end{document}